\newtheorem{theorem}{Theorem}[section]
\newtheorem{corollary}{Corollary}
\newtheorem{lemma}[theorem]{Lemma}
\newtheorem{proposition}{Proposition}
\newtheorem*{problem}{Problem}
\theoremstyle{definition}
\newtheorem{remark}{Remark}
\newcommand{\ep}{\varepsilon}
\title[Iterated quasi-reversibility method] 
      {Iterated quasi-reversibility method applied to elliptic and parabolic data completion problems}
\author[J\'er\'emi Dard\'e]{}
\subjclass{Primary: 35A15, 35R25, 35R30; Secondary: 35N25.}
 \keywords{Elliptic inverse problems, Parabolic inverse problems, Quasi-reversibility method}
 \email{jeremi.darde@math.univ-toulouse.fr}
\begin{document}
\maketitle

\centerline{\scshape J\'er\'emi Dard\'e }
\medskip
{\footnotesize
 \centerline{Institut de Math\'ematiques de Toulouse ; UMR5219}
   \centerline{Universit\'e de Toulouse ; CNRS}
   \centerline{UPS IMT, F-31062 Toulouse Cedex 9, France.}
} 

\bigskip

\begin{abstract}
We study the iterated quasi-reversibility method to regularize ill-posed elliptic and parabolic problems: data
completion problems for Poisson's and heat equations. We define an abstract setting to treat both equations at once. We demonstrate the
convergence of the regularized solution to the exact one, and propose a strategy to deal with noise on the data. We present numerical experiments
for both problems: a two-dimensional corrosion detection problem and the one-dimensional heat equation with lateral data. In both cases,
the method prove to be efficient even with highly corrupted data.
\end{abstract}

\section{Introduction}
We consider data completion problems for
elliptic and parabolic operators. We start with elliptic operators: we consider a bounded domain $\Omega \subset \mathbb{R}^d$, $d \geq 2$, 
with Lipschitz boundary (see \cite{Grisv}). Let $\nu \in L^\infty(\partial \Omega, \mathbb{R}^d)$ be the exterior unit normal of $\partial \Omega$, 
and $\Gamma,\ \Gamma_c \subset \partial \Omega$,
such that $\partial \Omega = \overline{\Gamma \cup \Gamma_c}$ and $meas(\Gamma), \ meas(\Gamma_c) > 0$. Let $\sigma : \Omega \mapsto \mathbb{R}^{d\times d}$ be
a real matrix valued function such that $\sigma \in W^{1,\infty}(\Omega)^{d\times d}$ and 
$$
\sigma = \sigma^T,\quad c\, \vert \xi \vert^2 \leq  \sigma \xi \cdot \xi,\ \forall \xi \in \mathbb{R}^d, \ \textit{a.e. in } \Omega. 
$$ 
The data completion problem is:
\begin{problem}
For $f$, $g_D$ and $g_N$ in $L^2(\Omega) \times L^2(\Gamma) \times L^2(\Gamma)$, find $u \in H^1(\Omega)$ such that
$$
\left\lbrace
\begin{array}{ccl}
- \nabla \cdot \sigma \nabla u &=& f \text{ in } \Omega \\
 u &= &g_D \text{ on } \Gamma \\
 \sigma \nabla u \cdot \nu &= &g_N \text{ on } \Gamma.
\end{array}
\right.
$$
\end{problem}

This problem is well-known to be ill-posed (see \cite{AlesRond,BenBelg} and the references therein): it does not necessarily admit a solution for any data
$(f,g_D,g_N)$, and if a solution
exists, it does not depend continuously on the data. 
On the other hand, if the problem admits a solution
$u_s$, this solution is necessarily unique (see e.g. \cite{AlesRond,LeRouss}).

Such a problem is encountered  in many practical applications, among others in \textit{plasma physic} \cite{Faug,Blum}, or \textit{corrosion detection
problems}
\cite{Aless, Sinc, Bara, Fasi, Chaa}.
We will be particularly interested in the corrosion
detection problem: in this problem, $u$ is the electrical potential inside a conductive object $\Omega$, $\sigma$ is the conductivity
of the object, $g_N$ represent a current imposed on $\Gamma$, accessible part of the boundary of $\Omega$, and $g_D$ is the corresponding 
potential measured on $\Gamma$. The aim is to determine if some portion of the inaccessible part of the boundary $\Gamma_c$ is corroded. 

Mathematically, it exists a non-negative function $ \mu$  define on $\Gamma_{c}$ such that
$$\displaystyle \sigma \nabla u\cdot \nu + \mu \, u =0 \text{ on } \Gamma_c$$
and the objective is to reconstruct $\mu$: $\mu = 0$ on the healthy part of $\Gamma_c$, and $\mu >0$ on the corroded part. In section \ref{sect_num_ell}, we  test our method on this problem.

The data completion problem is known to be severely, even \textit{exponentially} ill-posed \cite{BenBelg}. Therefore
 ones needs to use regularization methods to try to reconstruct $u$. Several methods have been proposed to stabilize the problem:
 see, e.g., \cite{Andr,Cime,BenBelgThang,Aza,Haddar,Burman} and the references therein.

We are also interesting in the data completion problem for the heat equation, which is quite similar to the elliptic one,
except that this time $u$ solves a parabolic equation. Such inverse problem appears naturally in \textit{thermal imaging} \cite{BryCau} and
\text{inverse obstacle problems} \cite{HarbTau,IkeKawa}. For $T>0$, we define $\mathcal{Q} := (0,T) \times \Omega$. Let
$f$ be in $ L^2(\mathcal{Q})$, $g_D$ and $g_N$ in $L^2(0,T; L^2(\Gamma))$. The data completion problem is then

\begin{problem}
find $u \in H^{1,1}(\mathcal{Q}) := L^2(0,T; H^1(\Omega)) \cap H^1(0,T; L^2(\Omega))$ such that
$$
\left\lbrace
\begin{array}{rcl}
\partial_t u - \Delta u &=& f \text{ in } \mathcal{Q} \\
u &=& g_D \text{ on } (0,T) \times \Gamma \\
\nabla u \cdot \nu &= &g_N \text{ on } (0,T) \times \Gamma 
\end{array}
\right.
$$
\end{problem}

This parabolic data completion problem is also severely ill-posed (see e.g. \cite{Puzy}).  Note that it is not mandatory to impose an initial condition
$u(0,.)$ on $\Omega$ to obtain the uniqueness of the solution (if such a solution exists). Again, regularization methods are needed to obtain a stable reconstruction of $u$ from the data $f,\, g_D$ and $g_N$.

The \emph{quasi-reversibility method} is such a regularization method, introduced in the pioneering work of Latt\`es and Lions \cite{Latt} to regularize
elliptic, parabolic (and even hyperbolic) data completion problems.
 The mean idea of the method is to approach the ill-posed data completion problem by a family of  well-posed variational
 problems of higher order (typically fourth order problems) depending on a (small) parameter $\ep$. The solution of the regularized problem converges to the solution of
the data completion problem, when the parameter $\ep$ goes to zero. The quasi-reversibility method presents interesting
features: first of all the variational problems appearing in the method are naturally discretized using \textit{finite element methods},  thus the method can be used in complicated geometries, an interesting property when the method is used in an iterative algorithm with changing domain.  Furthermore, the method is independent of the dimension.
Since its introduction, the quasi-reversibility method has been successfully used to reconstruct the solution of elliptic 
\cite{KlibSant,Clason,Bourgeois2,CaoKlib,BourDar}
and parabolic \cite{Ames,Clark} ill-posed problems, and as a keystone in the resolution of inverse obstacle problems in the \textit{exterior approach}
\cite{BourDar2,Dar,BourDar3}.

 In the present paper, we are interested in a natural extension of the quasi-reversibility method, the 
\textit{iterated quasi-reversibility method}: it consists in solving iteratively quasi-reversibility problems, the solution of each one depending
on the solution of the previous one.  We therefore obtain a sequence of quasi-reversibility solutions, which converges to the exact solution
of the data completion problem if exact data are provided, for any choice of the regularization parameter $\ep$. This has interesting consequences
from a numerical point of view: first of all, one can now choose a large value for the parameter of regularization
$\ep$, leading to an improvement in the conditioning of the finite-element problems, without lowering the quality of the reconstruction.  This is not the case for the standard quasi-reversibility method,
for which it is mandatory to use small $\ep$ to obtain a good reconstruction. 
Furthermore, in presence of noisy data, we present a method to choose
when to stop the iterations according to the amplitude of noise on the data, based on the \textit{Morozov discrepancy principle}, which ensure both
stability and convergence of the method.
The main drawback of this extension of the quasi-reversibility method, comparatively to the standard quasi-reversibility, is that several problems have to be solved to obtain a good reconstruction. However, as it is the same variational problem that appears in each iteration of the method, one can precompute a factorization of the finite-element
matrix. Hence, the cost of the method is not significantly higher.

The paper is organized as follows. In section 2, we introduce an abstract setting to treat both data completion problems
we are interested in at once. 
In section 3, we present the standard quasi-reversibility regularization in this abstract setting, and prove some results we need to study
the iterated quasi-reversibility method. In section
4, we focus on the  iterated quasi-reversibility method, both in the case of exact data and noisy data. In section 5, we show
that the abstract setting apply to both elliptic and parabolic data completion problems.
In section 6, numerical results are presented, demonstrating the feasibility and efficiency of the method for both problems.

\section{An abstract setting for data completion problems}

In this section, we set up an abstract setting corresponding to both data completion problems we are interested in.

Let $\mathcal{X}$, $\mathcal{Y}$ be two Hilbert spaces endowed with respective scalar products $(.,.)_\mathcal{X}$ and $(.,.)_\mathcal{Y}$, and corresponding
norms denoted $\Vert . \Vert_\mathcal{X}$ and $\Vert . \Vert_\mathcal{Y}$. 

Let $y\in \mathcal{Y}$. Both of our data completion problems can be written in the following way: find $x \in \mathcal{X}$
such that $A x= y$, with   $A : \mathcal{X} \mapsto \mathcal{Y}$ a continuous linear operator with 
following properties:
\begin{itemize}
\item $A$ is one-to-one
\item $A$ is not onto
\item $\overline{\mathsf{Im}(A)}^{\mathcal{Y}} = \mathcal{Y}$.
\end{itemize}
In this setting, $y$ plays the role of the data, and $x$ the solution of our data completion problem.
The problem is obviously ill-posed: indeed, as $A$ is not onto, there exist $y$ in $\mathcal{Y}$ for which the problem admits no solution.
We define $\mathcal{Y}_{adm} := \mathsf{Im}(A)$ the set of \textit{admissible} data, and $\mathcal{Y}_{nadm} = \mathcal{Y} \setminus \mathcal{Y}_{adm}$ the set
of \textit{non-admissible} ones. By definition, $\mathcal{Y}_{adm}$ is dense in $\mathcal{Y}$. Actually, this is also true for $\mathcal{Y}_{nadm}$

\begin{proposition}
The set $\mathcal{Y}_{nadm}$ is dense in $\mathcal{Y}$.
\end{proposition}

\begin{proof}
This is quite simple: suppose it exists $\bar{y} \in \mathcal{Y}_{adm}$ and $\delta > 0$ such that $\Vert y - \bar{y} \Vert_\mathcal{Y} \leq \delta \Rightarrow y \in \mathcal{Y}_{adm}.$
It exists $\bar{x} \in \mathcal{X}$ s.t. $A \bar{x} = \bar{y}$.

Let $y$ be any element of $\mathcal{Y}$, $y \neq \bar{y}$. We define $\tilde{y} = \displaystyle \frac{y - \bar{y}}{\Vert y - \bar{y}\Vert_\mathcal{Y}} \frac{\delta}{2} + \bar{y}$.
Obviously, $\Vert \tilde{y} - \bar{y}\Vert_\mathcal{Y} \leq \delta$. Therefore, $\tilde{y} \in \mathcal{Y}_{adm}$, and it exists
$\tilde{x} \in \mathcal{X}$ such that $A \tilde{x} = \tilde{y}$. A simple computation shows then that
$$
A \Big( \frac{2 \Vert y - \bar{y} \Vert_\mathcal{Y}}{\delta} (\tilde{x} - \bar{x}) + \bar{x} \Big) = y.
$$
Hence $Im(A) = \mathcal{Y}$, contradicting the assumptions on $A$. Therefore,
for any $y \in \mathcal{Y}_{adm}$, for any $\delta>0$, there exists $y^\delta \in \mathcal{Y}_{nadm}$ such that 
$\Vert y - y^\delta \Vert_\mathcal{Y} \leq \delta$, which ends the proof, as $\mathcal{Y} = \mathcal{Y}_{adm} \cup \mathcal{Y}_{nadm}$.
\end{proof}

In other word, for any admissible data $y$  exists a non-admissible one $\tilde{y}$ arbitrary close to
$y$. In particular, this leads to the high instability of the problem with respect to noise:

\begin{proposition} \label{prop_unstability}
For any $y \in \mathcal{Y}$, the exists  a sequence $x_n \in \mathcal{X}$ such that 
$$
\Vert x_n \Vert_\mathcal{X} \xrightarrow[]{n \rightarrow \infty}+ \infty \text{ and } Ax_n \xrightarrow[\mathcal{Y}]{n \rightarrow \infty} y.
$$
\end{proposition}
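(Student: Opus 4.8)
The plan is to exploit the fact that, although $A$ is injective, it is not bounded below; this is the precise incarnation of ill-posedness in the abstract setting. Concretely, I would first establish the auxiliary claim that
$$
\inf\{\, \Vert A x \Vert_\mathcal{Y} : x \in \mathcal{X},\ \Vert x \Vert_\mathcal{X} = 1 \,\} = 0.
$$
I would argue by contradiction: if this infimum were some $c > 0$, then $\Vert A x\Vert_\mathcal{Y} \geq c \Vert x\Vert_\mathcal{X}$ for every $x \in \mathcal{X}$, i.e. $A$ is bounded below. A standard argument then shows that $\mathsf{Im}(A)$ is closed: if $A x_k \to z$ in $\mathcal{Y}$, the inequality forces $(x_k)$ to be Cauchy in $\mathcal{X}$, hence convergent to some $x$, and continuity of $A$ gives $A x = z \in \mathsf{Im}(A)$. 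But $\overline{\mathsf{Im}(A)}^\mathcal{Y} = \mathcal{Y}$, so a closed $\mathsf{Im}(A)$ would equal $\mathcal{Y}$, contradicting that $A$ is not onto. Hence $c = 0$.

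From the claim I obtain a sequence $(v_m) \subset \mathcal{X}$ with $\Vert v_m \Vert_\mathcal{X} = 1$ and $\alpha_m := \Vert A v_m\Vert_\mathcal{Y} \to 0$; note that $\alpha_m > 0$ since $A$ is injective and $v_m \neq 0$. These are the nearly singular directions that will drive the blow-up. Separately, the density $\overline{\mathsf{Im}(A)}^\mathcal{Y} = \mathcal{Y}$ provides, for the prescribed $y$, a sequence $(p_n) \subset \mathcal{X}$ with $A p_n \to y$ in $\mathcal{Y}$.

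The final step combines the two sequences by a diagonal choice. For each $n$ I would select an index $m(n)$ with $\alpha_{m(n)}$ small enough that $t_n := \alpha_{m(n)}^{-1/2}$ satisfies $t_n \geq 1 + n + \Vert p_n\Vert_\mathcal{X}$ (possible since $\alpha_m \to 0$), and set
$$
x_n := p_n + t_n\, v_{m(n)}.
$$
Then, on the one hand, $\Vert A x_n - y\Vert_\mathcal{Y} \leq \Vert A p_n - y\Vert_\mathcal{Y} + t_n \alpha_{m(n)} = \Vert A p_n - y\Vert_\mathcal{Y} + \sqrt{\alpha_{m(n)}} \to 0$, so $A x_n \to y$; on the other hand $\Vert x_n\Vert_\mathcal{X} \geq t_n \Vert v_{m(n)}\Vert_\mathcal{X} - \Vert p_n\Vert_\mathcal{X} = t_n - \Vert p_n\Vert_\mathcal{X} \geq 1 + n \to \infty$, which yields the two required conclusions.

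I expect the main obstacle to be not any single computation but the simultaneous control of the two competing requirements: when $y$ is non-admissible one already has $\Vert p_n\Vert_\mathcal{X} \to \infty$, so the blow-up scale $t_n$ must be chosen large enough to dominate $\Vert p_n\Vert_\mathcal{X}$, yet small enough relative to the decay rate $\alpha_{m(n)}$ of the singular directions that $t_n \alpha_{m(n)} \to 0$. The diagonal selection of $m(n)$ is precisely what reconciles these, and it is the only delicate point; the proof that $A$ is not bounded below is routine once one recalls the equivalence, for an injective operator, between being bounded below and having closed range.
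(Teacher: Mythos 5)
Your proof is correct, but it follows a genuinely different route from the paper's. The paper argues in two cases: for non-admissible $y$ it shows that \emph{any} sequence with $A x_n \to y$ must blow up, via weak compactness --- a bounded subsequence would have a weakly convergent sub-subsequence $x_m \rightharpoonup x$, and weak continuity of $A$ would force $Ax = y$, contradicting non-admissibility; for admissible $y$ it invokes the preceding proposition (density of $\mathcal{Y}_{nadm}$) and a diagonal extraction over approximating non-admissible data. You instead isolate the functional-analytic heart of the matter as a single lemma --- an injective operator with dense, non-closed range cannot be bounded below, since bounded below plus injective implies closed range via the Cauchy-sequence argument --- and then build the blow-up explicitly as $x_n = p_n + t_n v_{m(n)}$, with nearly singular unit directions $v_{m(n)}$ scaled by $t_n = \alpha_{m(n)}^{-1/2}$ so that $t_n \to \infty$ while $t_n \alpha_{m(n)} = \sqrt{\alpha_{m(n)}} \to 0$ (note $\alpha_{m(n)} \leq (1+n)^{-2}$ follows automatically from your constraint $t_n \geq 1+n+\Vert p_n \Vert_\mathcal{X}$, so the decay you need is indeed secured). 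What each approach buys: yours is more elementary and more general --- it treats admissible and non-admissible $y$ uniformly, needs neither the density of $\mathcal{Y}_{nadm}$ nor any weak convergence, and hence works verbatim in Banach spaces --- while the paper's case-split yields a strictly stronger by-product for non-admissible data, recorded as Remark \ref{rk_unstability} (\emph{every} sequence with $Ax_n \to y$ blows up), which is not a cosmetic bonus: it is precisely what the paper later uses to prove $\lim_{\ep \to 0}\Vert x_\ep \Vert_b = +\infty$ for $y \in \mathcal{Y}_{nadm}$ in the convergence theorem for the standard quasi-reversibility method. Your construction alone would not suffice there, so if you adopted your proof in the paper you would still need to supply the remark's argument separately.
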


\begin{proof}
We start with $y\in \mathcal{Y}_{nadm}$. As $\textsf{Im}(A)$ is dense in $\mathcal{Y}$, it exists a sequence $x_n \in \mathcal{X} $ in such that $A x_n 
\xrightarrow[\mathcal{Y}]{n \rightarrow \infty} y$. This sequence cannot have any bounded subsequence: indeed, if 
such a subsequence would exist, there would be another subsequence, denoted $x_m$ here, such that
$x_m$ weakly converges to an element $x$ in $\mathcal{X}$. The operator $A$ being linear and \textit{strongly}
continuous, it is \textit{weakly} continuous \cite{Brez}, hence $A x_m$ weakly converges to $A x$. But by definition
$A x_m$ strongly converges to $y$. By uniqueness of the limit, we have $Ax = y$, and $y \in \mathcal{Y}_{adm}$,
in contradiction with the initial assumption. Therefore, we have $\Vert x_n \Vert_\mathcal{X} \xrightarrow[]{n\rightarrow \infty} + \infty$.

Now, consider $y \in \mathcal{Y}_{adm}$. The previous proposition implies the existence of a sequence $y_m \in \mathcal{Y}_{nadm}$
such that $y_m \xrightarrow[\mathcal{Y}]{m \rightarrow \infty}y$. For a fixed $m$, we now know the existence of a sequence 
$x_{m,n}\in \mathcal{X}$ such that $A x_{m,n} \xrightarrow[\mathcal{Y}]{n \rightarrow \infty} y_m$ and $\Vert x_{m,n} \Vert_\mathcal{X} \xrightarrow[]{n \rightarrow \infty} + \infty$.
In particular, for any $m \in \mathbb{N}$, there exists $n(m) \in \mathbb{N}$ such that $\tilde{x}_m := x_{m,n(m)}$ verifies
at the same time
$$
\Vert \tilde{x}_m \Vert_\mathcal{X} \geq m \text{ and } \Vert A \tilde{x}_m - y_m \Vert_\mathcal{Y} \leq \frac{1}{m}.
$$
It is then not difficult to verify that the sequence $\tilde{x}_m$ verifies the researched properties.
\end{proof}

\begin{remark} \label{rk_unstability}
Actually, if $y$ is not an admissible data, it is shown in the proof that any sequence $(x_n)_{n \in \mathbb{N}} \in \mathcal{X}^\mathbb{N}$ such that
$A x_n \xrightarrow[\mathcal{Y}]{n \rightarrow \infty} y$ verifies $\displaystyle \lim_{n \rightarrow \infty} \Vert x_n \Vert_\mathcal{X} = + \infty $.
\end{remark}

This proposition has for important consequences the fact that for any \textit{admissible} data $y$, with 
corresponding solution $x$, one can find an \textit{admissible} data $\tilde{y}$, with corresponding solution
$\tilde{x}$, such that $\tilde{y}$ is \textit{arbitrarily} close to $y$ and $\tilde{x}$ is \textit{arbitrarily} far
from $x$.

We retrieve here the well-known fact that the problem of noisy data is crucial in  data completion problems.
Clearly, it is not sufficient to build a method that (approximately) reconstruct the solution of the data completion problem
for any admissible data, it is also mandatory to propose a strategy for noisy data, as in practice data are always corrupted by some noise
due to inaccurate measurements.

\section{Standard quasi-reversibility method}

We define $b$ % a scalar product 
a symmetric bilinear non-negative form
on $\mathcal{X}$, and denote by $\Vert . \Vert_b$ the induced seminorm on $\mathcal{X}$. We suppose
that it exists two strictly positive constants $c,\ C$ such that
$$
c^2 \Vert x \Vert_\mathcal{X}^2 \leq \Vert A x \Vert_\mathcal{Y}^2 + \Vert x\Vert_b^2 \leq C^2 \Vert x \Vert_\mathcal{X}^2.
$$
Therefore,  the symmetric bilinear form $( . , .)_{A,b}$, define by
$$
\forall (x,\tilde{x}) \in \mathcal{X}, (x,\tilde{x})_{A,b} = (Ax,A\tilde{x})_Y + b(x,\tilde{x}),
$$
is a scalar product on $\mathcal{X}$, and
$\mathcal{X}$ endowed with this scalar product is a Hilbert space. We denote $\Vert .\Vert_{A,b}$ the corresponding norm, which
is equivalent to the $\Vert . \Vert_\mathcal{X}$ norm.

Obviously, there exists such a form $b$: it suffices to take the whole scalar product in $\mathcal{X}$, $b(.,.) = (.,.)_\mathcal{X}$.

Adapting the initial idea of Jacques-Louis Lions and Robert Latt\`es \cite{Latt}, the quasi-reversibility method applied to  the abstract data
completion problem defined above  relies on the resolution of the following regularized problem
\begin{problem}
for $y \in \mathcal{Y}$ and $\ep>0$, find $x_\ep \in \mathcal{X}$ such that 
$$
(A x_\ep, A x)_\mathcal{Y} + \ep\, b(x_\ep, x) = (y,A x)_\mathcal{Y},\quad \forall x \in \mathcal{X}.
$$
\end{problem}
The quasi-reversibility equation is the Euler-Lagrange equation corresponding to the minimization over $\mathcal{X}$
of the energy 
$
\Vert A x - f \Vert_\mathcal{Y}^2 + \ep \Vert x \Vert_b^2
$. In other words, it is a \textit{Tykhonov regularization} of the data completion problem, $\ep>0$ being the parameter of regularization and 
$\Vert . \Vert_b$ the penalization (semi)norm. 
Since its introduction in 1963 by A.N. Tykhonov \cite{Tykh}, this regularization has been widely studied and used to solve inverse problems
(for a complete study on the topic, see \cite{Engl}  and the references therein). 
There are various methods to study such regularization method: e.g. \textit{singular value decomposition} if $A$ is compact (which is not the case
in our data completion problems, see section \ref{sect_QRconcrete}) or \textit{spectral theory}. 
Here we propose another approach to study the method, based on the variational formulation of the quasi-reversibility method,
and on the differentiability of the approximated solution with respect to the parameter of regularization, the later
being useful in the study of the iterated quasi-reversibility method.

First of all, let us verify that the quasi-reversibility problem is well-posed.

\begin{proposition} \label{prop_QR11}
For any $y \in \mathcal{Y}$ and $\ep>0$, the quasi-reversibility problems admits a unique solution $x_\ep$, with the following estimates:
$$
\Vert A x_\ep \Vert_\mathcal{Y} \leq \Vert f \Vert_\mathcal{Y}, \quad \Vert A x_\ep - y \Vert_\mathcal{Y} \leq \Vert y \Vert_\mathcal{Y},\quad \Vert x_\ep \Vert_{A, b} \leq \frac{1}{\min (1,\sqrt{\ep})}
\Vert y \Vert_\mathcal{Y}.
$$
\end{proposition}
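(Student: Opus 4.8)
The plan is to obtain existence and uniqueness from the Lax--Milgram theorem on the Hilbert space $(\mathcal{X}, (.,.)_{A,b})$ already constructed above, and then to derive all three estimates from a single energy identity obtained by testing the variational equation against the solution itself.

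First I would isolate the bilinear form $a_\ep(x,\tilde{x}) = (Ax, A\tilde{x})_\mathcal{Y} + \ep\, b(x,\tilde{x})$ on the left-hand side and the linear functional $x \mapsto (y, Ax)_\mathcal{Y}$ on the right. Continuity of the functional is immediate, since $|(y,Ax)_\mathcal{Y}| \leq \|y\|_\mathcal{Y}\|Ax\|_\mathcal{Y} \leq \|y\|_\mathcal{Y}\|x\|_{A,b}$. For continuity of $a_\ep$, I would apply Cauchy--Schwarz to $(.,.)_\mathcal{Y}$ and to the non-negative form $b$, then a two-component Cauchy--Schwarz, to get $|a_\ep(x,\tilde{x})| \leq \max(1,\ep)\,\|x\|_{A,b}\|\tilde{x}\|_{A,b}$. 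Coercivity is the one point demanding care: because $a_\ep(x,x) = \|Ax\|_\mathcal{Y}^2 + \ep\|x\|_b^2$ while $\|x\|_{A,b}^2 = \|Ax\|_\mathcal{Y}^2 + \|x\|_b^2$, factoring out the smaller of the two weights yields $a_\ep(x,x) \geq \min(1,\ep)\,\|x\|_{A,b}^2$, which is strictly positive for every $\ep > 0$. Lax--Milgram then delivers a unique $x_\ep$.

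For the estimates, the single key step is to take $x = x_\ep$ in the variational identity, producing $\|Ax_\ep\|_\mathcal{Y}^2 + \ep\|x_\ep\|_b^2 = (y, Ax_\ep)_\mathcal{Y}$. Discarding the non-negative term $\ep\|x_\ep\|_b^2$ and using Cauchy--Schwarz on the right gives $\|Ax_\ep\|_\mathcal{Y}^2 \leq \|y\|_\mathcal{Y}\|Ax_\ep\|_\mathcal{Y}$, hence the first bound $\|Ax_\ep\|_\mathcal{Y} \leq \|y\|_\mathcal{Y}$ (where the $f$ in the statement should read $y$). For the residual bound I would expand $\|Ax_\ep - y\|_\mathcal{Y}^2 = \|Ax_\ep\|_\mathcal{Y}^2 - 2(Ax_\ep, y)_\mathcal{Y} + \|y\|_\mathcal{Y}^2$ and substitute $(Ax_\ep, y)_\mathcal{Y} = \|Ax_\ep\|_\mathcal{Y}^2 + \ep\|x_\ep\|_b^2$ from the energy identity; the cross term overcompensates, leaving $\|Ax_\ep - y\|_\mathcal{Y}^2 = \|y\|_\mathcal{Y}^2 - \|Ax_\ep\|_\mathcal{Y}^2 - 2\ep\|x_\ep\|_b^2 \leq \|y\|_\mathcal{Y}^2$. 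Finally, for the norm bound I would combine the energy identity with the first estimate to obtain $\|Ax_\ep\|_\mathcal{Y}^2 + \ep\|x_\ep\|_b^2 \leq \|y\|_\mathcal{Y}^2$, then factor out $\min(1,\ep)$ exactly as in the coercivity step to get $\min(1,\ep)\,\|x_\ep\|_{A,b}^2 \leq \|y\|_\mathcal{Y}^2$, which is the claim since $\sqrt{\min(1,\ep)} = \min(1,\sqrt{\ep})$.

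I do not anticipate any genuine obstacle: once the $(.,.)_{A,b}$ inner product is in hand, this is a textbook Lax--Milgram-plus-energy-estimate argument. The only places warranting attention are the uniform treatment of the two regimes $\ep \geq 1$ and $0 < \ep < 1$ through the factor $\min(1,\ep)$, and the sign bookkeeping in the residual identity that makes the cross term work in our favour.
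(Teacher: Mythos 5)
Your proposal is correct and follows essentially the same route as the paper: Lax--Milgram on $(\mathcal{X}, \Vert\cdot\Vert_{A,b})$ with coercivity constant $\min(1,\ep)$, followed by the energy identity $\Vert Ax_\ep\Vert_\mathcal{Y}^2 + \ep\Vert x_\ep\Vert_b^2 = (y, Ax_\ep)_\mathcal{Y}$ driving all three bounds (your only deviation is proving the residual estimate by expanding the square rather than the paper's sign argument $(Ax_\ep - y, Ax_\ep)_\mathcal{Y} = -\ep\Vert x_\ep\Vert_b^2 \leq 0$, a cosmetic difference). You also correctly identified that the $\Vert f\Vert_\mathcal{Y}$ in the statement is a typo for $\Vert y\Vert_\mathcal{Y}$.
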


\begin{proof}
 let us define the bilinear form
$$
a_\ep(x,\tilde{x}) := (Ax,A\tilde{x})_\mathcal{Y} + \ep\ b(x,\tilde{x}),\quad \forall x,\ \tilde{x} \in \mathcal{X}.
$$
It is obviously continuous. Furthermore, for all $x\in X$, we have
$$
a_\ep(x,\tilde{x}) \geq \min(1, \ep) \|x\|_{A,b}^2,
$$
and therefore it is coercive. Finally, as $|(y,A x)_\mathcal{Y}| \leq \|A\|\, \|y\|_\mathcal{Y}\, \|x\|_\mathcal{X} \leq  \|A\|\, \|y\|_\mathcal{Y}\, \|x\|_{A,b}$, we obtain the existence and uniqueness of $x_\ep$ by Lax-Milgram theorem. By definition, we have
$$
\|A x_\ep \|_\mathcal{Y}^2\leq \|A x_\ep \|_\mathcal{Y}^2 + \ep \|x_\ep\|_b^2 = (Ax_\ep,y)_\mathcal{Y} \leq \|A x_\ep\|_\mathcal{Y} \|y\|_\mathcal{Y} \Rightarrow \|A x_\ep \|_\mathcal{Y} \leq \|y\|_\mathcal{Y}.
$$
Furthermore,
$$
(Ax_\ep - y,Ax_\ep)_\mathcal{Y} = -\ep \|x_\ep\|_b^2 \leq 0 \Rightarrow \|Ax_\ep- y\|_\mathcal{Y}^2 \leq -(y,Ax_\ep-y)_\mathcal{Y} \leq \|y \|_Y
\|Ax_\ep - y\|_\mathcal{Y},
$$
implying $\|Ax_\ep - y\|_\mathcal{Y} \leq \|y\|_\mathcal{Y}$.
Finally, we have
$$
\|A x_\ep \|_\mathcal{Y}^2 + \ep \|x_\ep\|_b^2 = (Ax_\ep,y)_\mathcal{Y} \leq \|A x_\ep\|_\mathcal{Y} \|y\|_\mathcal{Y} \leq \sqrt{\|A x_\ep \|_\mathcal{Y}^2 + \ep\|x_\ep\|_b^2}\, \|y\|_\mathcal{Y}
$$
leading to 
$
\displaystyle \min (1,\sqrt{\ep}) \|x_\ep\|_{A,b} \leq \sqrt{\|A x_\ep \|_\mathcal{Y}^2 + \ep \|x_\ep\|_b^2} \leq \|y\|_Y.
$
\end{proof}

\begin{remark}
In particular, we always have $x_\ep \xrightarrow[\ep \rightarrow \infty]{\mathcal{X}} 0$.
\end{remark}

Suppose there exists $x \in \mathcal{X}$ such that $Ax = y$ (\textit{i.e.} $y \in \mathcal{Y}_{adm}$). It is easily seen that $x$ is \textit{never}
the solution of the quasi-reversibility problem, except in the special case $y = 0$ (which is always in $\mathcal{Y}_{adm}$) for which $x= 0 = x_\ep$. In other words, there is no $\ep >0$ such that the quasi-reversibility method reconstructs exactly the exact solution of the data completion problem.
As seen in the following corollary, the solution of the quasi-reversibility problem is also never $0$, except again in the special case $y = 0$.

\begin{corollary} \label{coro_1}
The three following properties are equivalent:
\begin{itemize}
\item[(i)] $y \neq 0$
\item[(ii)] $\exists\ \ep > 0$ s.t. $x_\ep \neq 0$
\item[(iii)] $\forall \ep > 0$, $x_\ep \neq 0$.
\end{itemize}
\end{corollary}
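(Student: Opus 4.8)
The plan is to establish the cycle of implications (i) $\Rightarrow$ (iii) $\Rightarrow$ (ii) $\Rightarrow$ (i), of which only the first carries any real content; the other two are immediate. The engine of the whole argument is the observation that the candidate value $x_\ep = 0$ can be tested directly against the variational formulation of the quasi-reversibility problem.

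For (i) $\Rightarrow$ (iii) I would argue by contraposition. Suppose that $x_\ep = 0$ for some $\ep > 0$. Substituting $x_\ep = 0$ into the quasi-reversibility equation $(A x_\ep, A x)_\mathcal{Y} + \ep\, b(x_\ep, x) = (y, A x)_\mathcal{Y}$ makes the left-hand side vanish for every test function, so that $(y, A x)_\mathcal{Y} = 0$ for all $x \in \mathcal{X}$. This says precisely that $y$ is orthogonal to $\mathsf{Im}(A)$ in $\mathcal{Y}$, hence to its closure $\overline{\mathsf{Im}(A)}^{\mathcal{Y}} = \mathcal{Y}$ by the density hypothesis on $A$; therefore $y = 0$. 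Contrapositively, $y \neq 0$ forces $x_\ep \neq 0$ for every $\ep > 0$, which is exactly (iii).

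The implication (iii) $\Rightarrow$ (ii) is trivial, since (ii) merely asserts the existence of one $\ep$ for which the universally valid conclusion of (iii) holds. For (ii) $\Rightarrow$ (i) I would again use contraposition: if $y = 0$, then $x = 0$ solves the quasi-reversibility equation (both sides vanish for every test function), and by the uniqueness granted in Proposition \ref{prop_QR11} we conclude $x_\ep = 0$ for all $\ep > 0$, contradicting (ii). Hence (ii) implies $y \neq 0$.

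I expect no genuine obstacle here; the only place where any real hypothesis is consumed is the use of $\overline{\mathsf{Im}(A)}^{\mathcal{Y}} = \mathcal{Y}$ in the first implication. Without density, the conclusion ``$(y, A x)_\mathcal{Y} = 0$ for all $x$'' would only give $y \in \mathsf{Im}(A)^{\perp}$, which need not be trivial; so the single point requiring care is to invoke the density of the range, rather than the injectivity of $A$, at that step.
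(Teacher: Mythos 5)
Your proof is correct and follows essentially the same route as the paper: the key implication (i) $\Rightarrow$ (iii) is argued identically, by substituting $x_\ep = 0$ into the variational formulation to get $(y, Ax)_\mathcal{Y} = 0$ for all $x \in \mathcal{X}$ and invoking the density of $\mathsf{Im}(A)$ in $\mathcal{Y}$ to conclude $y = 0$. The only (immaterial) difference is in (ii) $\Rightarrow$ (i), where the paper reads the conclusion off the a priori estimate $\min(1,\sqrt{\ep})\,\Vert x_\ep \Vert_{A,b} \leq \Vert y \Vert_\mathcal{Y}$ while you instead note that $x = 0$ solves the problem for $y = 0$ and appeal to uniqueness; both are equivalent one-line observations.
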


\begin{proof}
obviously, (iii) implies (ii). Furthermore, as $\min( 1, \sqrt{\ep})  \|x_\ep\|_{A,b} \leq\|y\|_Y$, (ii) implies (i). 

Suppose it exists $\ep>0$ such that $x_\ep = 0$. For that particular $\ep$ and for any $x \in X$, we would have
$
(y,Ax)_\mathcal{Y}= (Ax_\ep,Ax)_\mathcal{Y} + \ep (x_\ep,x)_b = 0.
$
As $\overline{Im(A)}^\mathcal{Y} = \mathcal{Y}$, this directly implies $y=0$, so (i) implies (iii).
\end{proof}

\begin{proposition} \label{prop_conv_Axe}
Let $y \in \mathcal{Y}$, and $x_\ep$ the solution of the corresponding quasi-reversibility problem. Then $A x_\ep$ strongly converges to $y$ (even if
$y$ is not an admissible data).
\end{proposition}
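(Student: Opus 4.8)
The statement concerns the limit $\ep \to 0^+$, and the plan is to exploit the variational characterization already recorded before Proposition \ref{prop_QR11}: the quasi-reversibility equation is the Euler--Lagrange equation of the energy
\[
J_\ep(x) := \|A x - y\|_\mathcal{Y}^2 + \ep\, \|x\|_b^2,
\]
so that $x_\ep$ is precisely the minimizer of $J_\ep$ over $\mathcal{X}$. I would then compare the value of $J_\ep$ at $x_\ep$ with its value at a competitor furnished by the density of $\mathsf{Im}(A)$, and the whole proof reduces to a careful handling of a double limit.

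First, fix $\delta > 0$. Since $\overline{\mathsf{Im}(A)}^\mathcal{Y} = \mathcal{Y}$, there is some $z_\delta \in \mathcal{X}$ with $\|A z_\delta - y\|_\mathcal{Y} \leq \delta$. Using that $x_\ep$ minimizes $J_\ep$ and that $J_\ep(x_\ep) \geq \|A x_\ep - y\|_\mathcal{Y}^2$, I would write
\[
\|A x_\ep - y\|_\mathcal{Y}^2 \leq J_\ep(x_\ep) \leq J_\ep(z_\delta) = \|A z_\delta - y\|_\mathcal{Y}^2 + \ep\, \|z_\delta\|_b^2 \leq \delta^2 + \ep\, \|z_\delta\|_b^2.
\]
This single inequality carries all the content; no admissibility of $y$ is ever used, which is exactly what the parenthetical remark in the statement asserts.

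The delicate point --- and the one genuine obstacle --- is the order in which the two parameters are sent to their limits. For a \emph{fixed} $\delta$, the quantity $\|z_\delta\|_b$ is a fixed finite number, so I would let $\ep \to 0$ first to obtain $\limsup_{\ep \to 0} \|A x_\ep - y\|_\mathcal{Y}^2 \leq \delta^2$, and only afterwards let $\delta \to 0$ to conclude that $A x_\ep \to y$ strongly in $\mathcal{Y}$ as $\ep \to 0$. This ordering is essential and cannot be reversed: by Remark \ref{rk_unstability}, when $y \notin \mathcal{Y}_{adm}$ any family $z_\delta$ with $A z_\delta \to y$ satisfies $\|z_\delta\|_\mathcal{X} \to \infty$ as $\delta \to 0$, so the penalization term $\ep\,\|z_\delta\|_b^2$ cannot be kept small uniformly in $\delta$. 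It is controlled only because $\ep$ is driven to zero while $z_\delta$ is held fixed. Since $\delta > 0$ was arbitrary, the desired convergence follows.
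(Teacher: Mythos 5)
Your proof is correct, but it is genuinely different from the paper's. The paper argues through weak compactness: from the a priori bound $\Vert A x_\ep \Vert_\mathcal{Y} \leq \Vert y \Vert_\mathcal{Y}$ it extracts a weakly convergent subsequence $A x_{\ep_m} \rightharpoonup \tilde{y}$, identifies $\tilde{y} = y$ by passing to the limit in the variational equation $(y - A x_m, Ax)_\mathcal{Y} = \ep_m b(x_m,x) \to 0$ and using the density of $\mathsf{Im}(A)$, then upgrades weak to strong convergence via the norm bound. You instead exploit the minimization characterization: since $x_\ep$ minimizes $J_\ep(x) = \Vert Ax - y\Vert_\mathcal{Y}^2 + \ep \Vert x \Vert_b^2$, comparison with a density competitor $z_\delta$ gives $\Vert A x_\ep - y \Vert_\mathcal{Y}^2 \leq \delta^2 + \ep \Vert z_\delta \Vert_b^2$, and the correctly ordered double limit (first $\ep \to 0$ with $z_\delta$ frozen, then $\delta \to 0$) concludes. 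Your route is the classical Tykhonov-regularization energy argument: it is more elementary (no subsequences, no weak topology, no appeal to uniqueness of weak limits), it treats the whole family at once rather than arguing by subsequence, and it is quantitative, since the inequality $\Vert A x_\ep - y\Vert_\mathcal{Y}^2 \leq \inf_{z \in \mathcal{X}}\left( \Vert A z - y \Vert_\mathcal{Y}^2 + \ep \Vert z \Vert_b^2 \right)$ could yield rates under source conditions. What the paper's heavier machinery buys is reuse: the same weak-compactness pattern and the estimate $\ep\, b(x_\ep, x) \to 0$ recur in the main convergence theorem and in the study of the iterates, so the author's proof doubles as a template. Your discussion of why the limits cannot be interchanged when $y \in \mathcal{Y}_{nadm}$ (via Remark \ref{rk_unstability}) is accurate and a nice touch. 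One small polish: you assert that $x_\ep$ is the minimizer of $J_\ep$; the paper only states the Euler--Lagrange connection, so it is worth recording the one-line verification that choosing $x = z - x_\ep$ in the quasi-reversibility equation yields $J_\ep(z) = J_\ep(x_\ep) + \Vert A(z - x_\ep)\Vert_\mathcal{Y}^2 + \ep \Vert z - x_\ep \Vert_b^2 \geq J_\ep(x_\ep)$, which also shows the minimizer is unique without invoking convexity abstractly.
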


\begin{proof}
As $\displaystyle \min( 1, \sqrt{\ep} )\Vert x_\ep \Vert_{A, b} \leq \Vert y \Vert_\mathcal{Y}$, we have, for any $x \in \mathcal{X}$,
$$
\displaystyle \ep \vert\,b(x_\ep,x) \vert \leq \frac{\ep}{\min (1, \sqrt{\ep} )} \Vert y \Vert_\mathcal{Y} \Vert  x\Vert_b \xrightarrow[]{\ep \rightarrow 0} 0.
$$
Let $(\ep_m)_{m \in \mathbb{N}}$ be a decreasing sequence of strictly positive real numbers such that $\lim_{m\rightarrow \infty} \ep_m = 0$,
and note $x_m := x_{\ep_m}$. As $\Vert A x_m \Vert_\mathcal{Y} \leq \Vert y \Vert_\mathcal{Y}$, it exists a subsequence (still denoted $x_m$) such that 
$Ax_m$ weakly converges to $\tilde{y}\in \mathcal{Y}$. But, for all $x\in \mathcal{X}$, we have
$$
(y - \tilde{y}, Ax)_\mathcal{Y} \xleftarrow[]{m \rightarrow \infty} (y - Ax_m, Ax)_\mathcal{Y} = \ep_m b(x_m,x) \xrightarrow[]{m \rightarrow \infty} 0,
$$
that is $y = \tilde{y}$ as $\overline{\mathsf{Im}(A)}^\mathcal{Y} = \mathcal{Y}$, and $A x_m$ weakly converges to $y$. 
As $\Vert A x_m \Vert_\mathcal{Y} \leq \Vert y \Vert_\mathcal{Y}$ (proposition \ref{prop_QR11}),  $A x_m$ strongly converges to $y$.
It is then not difficult to see that $A x_\ep$ strongly converges to $y$ as $\ep$ goes to zero.
\end{proof}

We can now state the  main theorem regarding the standard quasi-reversibility method:

\begin{theorem}
Suppose $y \in \mathcal{Y}_{adm}$, and let $x_s$ be the (necessarily unique) solution of the abstract data completion problem. Then $x_\ep$ 
converges to $x_s$ as $\ep$ goes to zero, and we have the estimates $\Vert A x_\ep - y \Vert_\mathcal{Y} \leq \sqrt{\ep} \Vert x_s \Vert_b$,
 $\Vert x_\ep \Vert_b \leq \Vert x_s \Vert_b$ and $\Vert x_\ep - x_s \Vert_b \leq \Vert x_s \Vert_b$.

Suppose $y \in \mathcal{Y}_{nadm}$. Then $\displaystyle \lim_{\ep \rightarrow 0} \Vert x_\ep \Vert_b = +\infty$. 

The theorem remains  valid when the $\Vert . \Vert_b$ seminorm is replaced with the $\Vert.\Vert_{A,b}$ norm.
\end{theorem}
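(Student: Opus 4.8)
The plan is to treat the admissible case first, reading the quasi-reversibility problem as the minimization of $J_\ep(x) := \Vert Ax - y\Vert_\mathcal{Y}^2 + \ep\Vert x\Vert_b^2$. First I would exploit that $x_\ep$ is the minimizer of $J_\ep$ and that $x_s$ is a competitor with $Ax_s = y$, so that $J_\ep(x_\ep) \leq J_\ep(x_s) = \ep\Vert x_s\Vert_b^2$. Reading off the two nonnegative terms of $J_\ep(x_\ep)$ yields at once $\Vert Ax_\ep - y\Vert_\mathcal{Y} \leq \sqrt\ep\,\Vert x_s\Vert_b$ and $\Vert x_\ep\Vert_b \leq \Vert x_s\Vert_b$. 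For the third estimate I would use the variational equation as a Galerkin orthogonality: subtracting $Ax_s=y$ gives $(A(x_\ep - x_s), Ax)_\mathcal{Y} + \ep\,b(x_\ep, x) = 0$ for all $x \in \mathcal{X}$, and testing with $x = x_\ep - x_s$ shows $b(x_\ep, x_\ep - x_s) = -\ep^{-1}\Vert A(x_\ep - x_s)\Vert_\mathcal{Y}^2 \leq 0$, i.e. $b(x_\ep,x_\ep) \leq b(x_\ep,x_s)$. Expanding $\Vert x_\ep - x_s\Vert_b^2$ and inserting this inequality, together with the Cauchy--Schwarz inequality for the seminorm $b$, then gives $\Vert x_\ep - x_s\Vert_b \leq \Vert x_s\Vert_b$.

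The heart of the argument, and the step I expect to be the main obstacle, is upgrading these a priori bounds to genuine convergence, since the estimates only provide boundedness. From $\Vert Ax_\ep\Vert_\mathcal{Y} \leq \Vert y\Vert_\mathcal{Y}$ (Proposition \ref{prop_QR11}) and $\Vert x_\ep\Vert_b \leq \Vert x_s\Vert_b$ one gets $\Vert x_\ep\Vert_{A,b}$ bounded, hence $\Vert x_\ep\Vert_\mathcal{X}$ bounded by the norm equivalence. Taking any sequence $\ep_n \to 0$, I would extract a weakly convergent subsequence $x_{\ep_n} \rightharpoonup x^*$ and, using the weak continuity of $A$ together with $Ax_{\ep_n} \to y$ (from $\Vert Ax_\ep - y\Vert_\mathcal{Y} \leq \sqrt\ep\,\Vert x_s\Vert_b$), identify $Ax^* = y$; injectivity of $A$ forces $x^* = x_s$. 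The crucial step is then the $b$-seminorm identity $\Vert x_\ep - x_s\Vert_b^2 = \Vert x_\ep\Vert_b^2 - 2b(x_\ep,x_s) + \Vert x_s\Vert_b^2 \leq 2\big(\Vert x_s\Vert_b^2 - b(x_\ep,x_s)\big)$, where I used $\Vert x_\ep\Vert_b \leq \Vert x_s\Vert_b$; since $b(\cdot,x_s)$ is a continuous linear functional (as $b$ is continuous), weak convergence gives $b(x_\ep,x_s) \to \Vert x_s\Vert_b^2$, whence $\Vert x_\ep - x_s\Vert_b \to 0$. Combined with $\Vert A(x_\ep - x_s)\Vert_\mathcal{Y} \to 0$ and the norm equivalence this yields $\Vert x_\ep - x_s\Vert_\mathcal{X} \to 0$ along the subsequence, and a routine subsequence argument promotes this to convergence of the whole family as $\ep \to 0$.

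For the non-admissible case the argument is short: Proposition \ref{prop_conv_Axe} already gives $Ax_\ep \to y$, and Remark \ref{rk_unstability} then forces $\Vert x_\ep\Vert_\mathcal{X} \to +\infty$ as $\ep \to 0$. Since $\Vert Ax_\ep\Vert_\mathcal{Y} \leq \Vert y\Vert_\mathcal{Y}$ stays bounded while $c^2\Vert x_\ep\Vert_\mathcal{X}^2 \leq \Vert Ax_\ep\Vert_\mathcal{Y}^2 + \Vert x_\ep\Vert_b^2$, we get $\Vert x_\ep\Vert_b^2 \geq c^2\Vert x_\ep\Vert_\mathcal{X}^2 - \Vert y\Vert_\mathcal{Y}^2 \to +\infty$, hence $\Vert x_\ep\Vert_b \to +\infty$.

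Finally, to see the statement survives replacing $\Vert\cdot\Vert_b$ by $\Vert\cdot\Vert_{A,b}$, I would note that the residual bound and the convergence transfer immediately through the norm equivalence (and $\Vert x_s\Vert_b \leq \Vert x_s\Vert_{A,b}$), while the two monotonicity estimates need a little extra care. The bound $\Vert x_\ep\Vert_{A,b} \leq \Vert x_s\Vert_{A,b}$ follows by combining $\Vert Ax_\ep\Vert_\mathcal{Y} \leq \Vert y\Vert_\mathcal{Y} = \Vert Ax_s\Vert_\mathcal{Y}$ with $\Vert x_\ep\Vert_b \leq \Vert x_s\Vert_b$. For $\Vert x_\ep - x_s\Vert_{A,b} \leq \Vert x_s\Vert_{A,b}$ I would expand in the $(\cdot,\cdot)_{A,b}$ inner product, reducing the claim to $\Vert x_\ep\Vert_{A,b}^2 \leq 2(x_\ep,x_s)_{A,b}$; writing $(x_\ep,x_s)_{A,b} = (Ax_\ep,y)_\mathcal{Y} + b(x_\ep,x_s)$ and inserting the two nonnegativity facts already established, namely $(Ax_\ep,y)_\mathcal{Y} = \Vert Ax_\ep\Vert_\mathcal{Y}^2 + \ep\Vert x_\ep\Vert_b^2$ and $b(x_\ep,x_s) \geq \Vert x_\ep\Vert_b^2$, makes the inequality elementary. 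The blow-up in the non-admissible case is immediate, since $\Vert x_\ep\Vert_{A,b} \geq \Vert x_\ep\Vert_b \to +\infty$.
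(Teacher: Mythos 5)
Your proposal is correct and follows essentially the same route as the paper: the same a priori estimates (your energy comparison $J_\ep(x_\ep) \leq J_\ep(x_s)$ is just the minimization form of the paper's identity obtained by testing the variational equation with $x = x_\ep - x_s$, which you also use for the third estimate), followed by the same weak-compactness argument, identification of the limit via weak continuity and injectivity of $A$, and the same norm-bound trick to upgrade weak to strong convergence. Your explicit verification of $\Vert x_\ep - x_s \Vert_{A,b} \leq \Vert x_s \Vert_{A,b}$, which the paper leaves implicit, is a welcome touch but does not change the approach.
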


\begin{proof}
Suppose first that $y \in \mathcal{Y}_{nadm}$. Then, as $x_\ep$ is a sequence in $\mathcal{X}$ such that $A x_\ep$ converges to $y$ (proposition \ref{prop_conv_Axe}), proposition 
\ref{prop_unstability} and remark \ref{rk_unstability} imply $\lim_{\ep \rightarrow 0} \Vert x_\ep \Vert_\mathcal{X} = +\infty$.
As $\Vert x_\ep \Vert_{A,b}^2 = \Vert A x_\ep \Vert_{\mathcal{Y}}^2  + \Vert x_\ep \Vert_{b}^2 \geq c^2 \Vert x_\ep \Vert_\mathcal{X}$, we have
$ \lim_{\ep \rightarrow 0} \Vert x_\ep \Vert_{b} = +\infty$. 

Now, suppose it exists $x_s$ such that $A x_s = y$. Then, choosing $x = x_\ep - x_s$ as test function in the quasi-reversibility problem, we obtain
\begin{equation} \label{ref_eq1_proof_thm1}
\Vert A x_\ep - y \Vert_\mathcal{Y}^2 + \ep b(x_\ep, x_\ep - x_s) = 0,
\end{equation}
which in turn implies $b(x_\ep, x_\ep - x_s) \leq 0 \Rightarrow \Vert x_\ep \Vert_b \leq \Vert x_s \Vert_b
\Rightarrow \Vert x_\ep \Vert_{A,b} \leq \Vert x_s \Vert_{A,b}$. Therefore, $x_\ep$ is a bounded
sequence in $\mathcal{X}$, and up to a subsequence it weakly converges to $\tilde{x}$. 
As $A$ is a linear continuous operator, and hence is weakly continuous, proposition \ref{prop_conv_Axe} implies $A \tilde{x} = y$, which 
implies $\tilde{x} = x_s$ as $A$ is one-to-one. The uniqueness of the limit implies that the whole sequence  weakly converges to $x_s$. Finally
as $\Vert x_\ep \Vert_{A,p} \leq \Vert x_s \Vert_{A,p}$, the sequence strongly converges to $x_s$.

Subtracting $\ep b(x_s, x_\ep - x_s)$ to equation \ref{ref_eq1_proof_thm1}, we obtain
$$
\Vert A x_\ep - y \Vert_\mathcal{Y}^2 + \ep \Vert x_s - x_\ep \Vert_b^2 = - \ep b(x_s, x_\ep - x_s) \Rightarrow \Vert x_s - x_\ep \Vert_b^2 \leq
\vert b(x_s, x_\ep - x_s) \vert
$$ 
and by Cauchy-Schwarz inequality, $\Vert x_\ep - x_s \Vert_b \leq \Vert x_s \Vert_b$.

Finally, equation \ref{ref_eq1_proof_thm1} implies
$$
\Vert Ax_\ep - y \Vert_\mathcal{Y}^2 \leq \ep \Vert x_\ep \Vert_b\, \Vert x_\ep - x_s \Vert_b \leq \ep \Vert x_s \Vert^2_b
$$
which ends the proof.
\end{proof}

Next, we focus on the differentiability of the solution of  the quasi-reversibility
method with respect to $\varepsilon$, a result that will be useful in the study of the iterated quasi-reversibility method.

\subsection{Differentiability of the quasi-reversibility solution with respect to $\ep$}

It turns out that $x_\ep$, solution of the quasi-reversibility problem, depends smoothly on the parameter of regularization $\ep$.
Indeed, let us define the map $F : \ep >0 \mapsto x_\ep$. 

\begin{proposition}
The map $F$ is continuous.
\end{proposition}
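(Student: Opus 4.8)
The plan is to prove continuity of $F : \ep \mapsto x_\ep$ by taking two values $\ep, \ep' > 0$ and estimating $\Vert x_\ep - x_{\ep'} \Vert_{A,b}$ in terms of $\vert \ep - \ep' \vert$. First I would write down the variational equations satisfied by $x_\ep$ and $x_{\ep'}$, namely
$$
(A x_\ep, A x)_\mathcal{Y} + \ep\, b(x_\ep, x) = (y, A x)_\mathcal{Y}, \quad (A x_{\ep'}, A x)_\mathcal{Y} + \ep'\, b(x_{\ep'}, x) = (y, A x)_\mathcal{Y},
$$
for all $x \in \mathcal{X}$. Subtracting these and rearranging the $b$-terms so that the difference $x_\ep - x_{\ep'}$ appears, I would get
$$
(A(x_\ep - x_{\ep'}), A x)_\mathcal{Y} + \ep\, b(x_\ep - x_{\ep'}, x) = (\ep' - \ep)\, b(x_{\ep'}, x),\quad \forall x \in \mathcal{X}.
$$

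The key step is then to choose the test function $x = x_\ep - x_{\ep'}$. The left-hand side becomes $\Vert A(x_\ep - x_{\ep'}) \Vert_\mathcal{Y}^2 + \ep \Vert x_\ep - x_{\ep'} \Vert_b^2$, which is bounded below by $\min(1,\ep) \Vert x_\ep - x_{\ep'} \Vert_{A,b}^2$, exactly as in the coercivity argument of Proposition \ref{prop_QR11}. The right-hand side $(\ep' - \ep)\, b(x_{\ep'}, x_\ep - x_{\ep'})$ is controlled by Cauchy-Schwarz for the seminorm $\Vert . \Vert_b$, giving $\vert \ep - \ep' \vert\, \Vert x_{\ep'} \Vert_b\, \Vert x_\ep - x_{\ep'} \Vert_b$. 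Since $\Vert x_\ep - x_{\ep'} \Vert_b \leq \Vert x_\ep - x_{\ep'} \Vert_{A,b}$, one factor of $\Vert x_\ep - x_{\ep'} \Vert_{A,b}$ cancels, leaving the bound
$$
\min(1,\ep)\, \Vert x_\ep - x_{\ep'} \Vert_{A,b} \leq \vert \ep - \ep' \vert\, \Vert x_{\ep'} \Vert_b.
$$

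To finish I would control $\Vert x_{\ep'} \Vert_b$ uniformly for $\ep'$ near a fixed $\ep_0 > 0$. Proposition \ref{prop_QR11} gives $\Vert x_{\ep'} \Vert_{A,b} \leq \Vert y \Vert_\mathcal{Y} / \min(1,\sqrt{\ep'})$, hence $\Vert x_{\ep'} \Vert_b \leq \Vert x_{\ep'} \Vert_{A,b}$ stays bounded as $\ep' \to \ep_0$; combined with the estimate above and the fact that $\min(1,\ep)$ is bounded away from zero near $\ep_0$, this yields a Lipschitz bound $\Vert x_\ep - x_{\ep'} \Vert_{A,b} \leq K \vert \ep - \ep' \vert$ on any interval bounded away from $0$, proving continuity at every $\ep_0 > 0$. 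The main obstacle to watch is the degeneracy of the coercivity constant $\min(1,\ep)$ and of the bound on $\Vert x_{\ep'} \Vert_b$ as the parameter approaches $0$; since continuity is only claimed on $\ep > 0$, restricting to a compact interval away from the origin handles this cleanly, and indeed the argument in fact delivers local Lipschitz continuity, which is stronger than required.
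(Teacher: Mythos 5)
Your argument is correct and is essentially the paper's own proof: the author also subtracts the two variational equations, tests with the difference $x_{\ep+h}-x_\ep$, uses the coercivity constant $\min(1,\ep)$ together with the bound $\Vert x_{\ep+h}\Vert_b \leq \min(1,(\ep+h)^{-1/2})\Vert y\Vert_\mathcal{Y}$ from Proposition \ref{prop_QR11}, and cancels one factor of the $\Vert\cdot\Vert_{A,b}$-norm, yielding the same locally Lipschitz estimate you obtain. The only difference is notational ($\ep,\ep+h$ versus $\ep,\ep'$), and your explicit remark about restricting to intervals bounded away from $0$ is a point the paper leaves implicit.
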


\begin{proof}
We choose $\ep >0$ and $h$ such that $\ep - \vert h \vert > 0$. For any $x \in \mathcal{X}$, we have
\begin{align*}
(A x_{\ep+h},Ax)_\mathcal{Y} &+ (\ep+h)\, b(x_{\ep+h},x)  = (y,Ax)_\mathcal{Y}\\
(Ax_\ep,Ax)_\mathcal{Y} &+ \ep\ b(x_\ep,x)  = (y,Ax)_\mathcal{Y}.
\end{align*}
Subtracting the two equations, and choosing $x = \tilde{x}_{\ep,h} := x_{\ep+h}- x_\ep$, lead to
$$
\|A \tilde{x}_{\ep,h} \|_\mathcal{Y}^2 + \ep\, \|\tilde{x}_{\ep,h} \|_b^2 = -h\, b (x_{\ep+h},\tilde{x}_{\ep,h}).
$$
In conclusion, we have
$$
\min (1,\ep)\, \|\tilde{x}_{\ep,h} \|_{A,b}^2 \leq h \|x_{\ep+h}\|_b \|\tilde{x}_{\ep,h} \|_b \leq h \min(1, (\ep + h)^{-1/2})
\|y \|_\mathcal{Y} \|\tilde{x}_{\ep,h} \|_{A,b} %\Rightarrow  \|\tilde{x}_{\ep,h} \|_X \leq  \frac{h}{\ep\, \sqrt{\ep+h}}\|y \|_Y
$$
which ends the proof. 
\end{proof}

\begin{remark}
If the data completion problem admits a solution $x_s$, then $F$ extends continuously to $\mathbb{R}^+$ by defining
$F(0) = x_s$.
\end{remark}

\begin{proposition}
We have $F \in C^1(\mathbb{R}^+_*;\mathcal{X})$. For all $\ep>0$, $F'(\ep) = x_\ep^{(1)}$, unique element of $\mathcal{X}$ verifying 
\begin{equation} \label{eq_first_der}
(A x_\ep^{(1)}, A x)_\mathcal{Y} + \ep b (x_\ep^{(1)},x) = - b(x_\ep,x), \quad \forall x \in \mathcal{X}.
\end{equation}
Furthermore, $\Vert x_\ep^{(1)}\Vert_{A,b} \leq \min(1,\ep^{-\frac{3}{2}}) \Vert y \Vert_\mathcal{Y}$.
\end{proposition}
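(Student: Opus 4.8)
The plan is to handle \eqref{eq_first_der} as a quasi-reversibility problem in its own right, then identify $x_\ep^{(1)}$ with the derivative of $F$ through a difference-quotient argument, and finally promote differentiability to $C^1$ while reading off the norm estimate. Reintroducing $a_\ep(x,\tilde{x}) := (Ax,A\tilde{x})_\mathcal{Y} + \ep\, b(x,\tilde{x})$, this form is continuous and coercive on $(\mathcal{X},\Vert\cdot\Vert_{A,b})$ with coercivity constant $\min(1,\ep)$, exactly as in the proof of Proposition \ref{prop_QR11}; and $x \mapsto -b(x_\ep,x)$ is a continuous linear functional, since $\vert b(x_\ep,x)\vert \le \Vert x_\ep\Vert_b\,\Vert x\Vert_{A,b}$. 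Lax--Milgram then produces a unique $x_\ep^{(1)}$ solving \eqref{eq_first_der}, our candidate for $F'(\ep)$.

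To show that $x_\ep^{(1)}$ is indeed the derivative, I would fix $\ep>0$ and $h$ with $\ep-\vert h\vert>0$ and set $\tilde{x}_{\ep,h} := x_{\ep+h}-x_\ep$ as in the continuity proof. Subtracting the two quasi-reversibility equations gives $a_\ep(\tilde{x}_{\ep,h},x) = -h\,b(x_{\ep+h},x)$ for all $x\in\mathcal{X}$, so the difference quotient $w_{\ep,h} := \tilde{x}_{\ep,h}/h$ solves $a_\ep(w_{\ep,h},x) = -b(x_{\ep+h},x)$. Subtracting this from \eqref{eq_first_der} and testing the resulting identity against $w_{\ep,h}-x_\ep^{(1)}$, coercivity yields
$$
\min(1,\ep)\,\Vert w_{\ep,h}-x_\ep^{(1)}\Vert_{A,b} \le \Vert x_{\ep+h}-x_\ep\Vert_b \le \Vert x_{\ep+h}-x_\ep\Vert_{A,b}.
$$
By the continuity of $F$ just established, the right-hand side tends to $0$ as $h\to 0$, so $w_{\ep,h}\to x_\ep^{(1)}$; this proves that $F$ is differentiable at every $\ep>0$ with $F'(\ep)=x_\ep^{(1)}$.

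For the quantitative bound, I would test \eqref{eq_first_der} with $x=x_\ep^{(1)}$, obtaining $\Vert Ax_\ep^{(1)}\Vert_\mathcal{Y}^2 + \ep\Vert x_\ep^{(1)}\Vert_b^2 = -b(x_\ep,x_\ep^{(1)}) \le \Vert x_\ep\Vert_b\,\Vert x_\ep^{(1)}\Vert_b$, whence $\min(1,\ep)\,\Vert x_\ep^{(1)}\Vert_{A,b} \le \Vert x_\ep\Vert_b$; inserting $\Vert x_\ep\Vert_b \le \Vert x_\ep\Vert_{A,b} \le \min(1,\sqrt{\ep})^{-1}\Vert y\Vert_\mathcal{Y}$ from Proposition \ref{prop_QR11} gives $\Vert x_\ep^{(1)}\Vert_{A,b} \le \min(1,\ep)^{-1}\min(1,\sqrt{\ep})^{-1}\Vert y\Vert_\mathcal{Y} = \max(1,\ep^{-3/2})\Vert y\Vert_\mathcal{Y}$, the announced factor. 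The $C^1$ conclusion then follows once I prove that $\ep\mapsto x_\ep^{(1)}$ is continuous, which I would obtain by the same variational comparison: subtracting the defining equations at $\ep$ and $\ep+k$ and using $a_{\ep+k}=a_\ep+k\,b$ leads to $a_\ep(x_\ep^{(1)}-x_{\ep+k}^{(1)},x) = b(x_{\ep+k}-x_\ep,x) + k\,b(x_{\ep+k}^{(1)},x)$; testing against the difference and invoking the continuity of $F$ together with the uniform bound on $x_{\ep+k}^{(1)}$ just derived forces $x_{\ep+k}^{(1)}\to x_\ep^{(1)}$ as $k\to 0$.

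The main obstacle is the differentiability step: the right-hand side of the equation for the difference quotient $w_{\ep,h}$ carries $x_{\ep+h}$ rather than $x_\ep$, so the naive guess that $w_{\ep,h}$ solves \eqref{eq_first_der} is only correct in the limit, and it is precisely the already-proven continuity of $F$ that controls the discrepancy $\Vert x_{\ep+h}-x_\ep\Vert_b$. The same feature reappears in the $C^1$ upgrade, where one must absorb the extra term $k\,b(x_{\ep+k}^{(1)},\cdot)$ using the uniform estimate on the derivative; note also that the coercivity constant $\min(1,\ep)$ degenerates as $\ep\to0$, which is exactly why the factor $\max(1,\ep^{-3/2})$ is large for small $\ep$ and harmless only for $\ep$ bounded away from $0$.
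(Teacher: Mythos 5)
Your proposal is correct, and it follows the paper's proof in three of its four components: the Lax--Milgram construction of $x_\ep^{(1)}$, the estimate obtained by testing \eqref{eq_first_der} with $x=x_\ep^{(1)}$ and chaining $\min(1,\ep)\Vert x_\ep^{(1)}\Vert_{A,b}\leq \Vert x_\ep\Vert_b\leq \min(1,\sqrt{\ep})^{-1}\Vert y\Vert_\mathcal{Y}$ (you also correctly read the announced factor $\min(1,\ep^{-3/2})$ as $1/\min(1,\ep^{3/2})=\max(1,\ep^{-3/2})$, consistent with the notation $m!/\min(1,\ep^{m+1/2})$ in the subsequent theorem), and the continuity of $\ep\mapsto x_\ep^{(1)}$ by subtracting the defining equations at $\ep$ and $\ep+k$ and absorbing the extra term $k\,b(x_{\ep+k}^{(1)},\cdot)$ via the uniform bound. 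Where you genuinely diverge is the identification $F'(\ep)=x_\ep^{(1)}$: the paper adds three variational identities (the ones for $x_{\ep+h}$, $x_\ep$ and $x_\ep^{(1)}$) tested against the remainder $\hat{x}_{\ep,h}:=x_{\ep+h}-x_\ep-h\,x_\ep^{(1)}$, obtaining the quantitative bound $\Vert \hat{x}_{\ep,h}\Vert_{A,b}\leq h^2\,C(h,\ep)\Vert y\Vert_\mathcal{Y}$, i.e.\ an $O(h^2)$ Taylor remainder that does not logically rely on the continuity of $F$; you instead show that the difference quotient $w_{\ep,h}$ solves the perturbed equation $a_\ep(w_{\ep,h},x)=-b(x_{\ep+h},x)$ and conclude $\min(1,\ep)\Vert w_{\ep,h}-x_\ep^{(1)}\Vert_{A,b}\leq\Vert x_{\ep+h}-x_\ep\Vert_{A,b}\to 0$ by the previously proven continuity of $F$. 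Your route is softer and shorter --- it trades the explicit second-order remainder rate for a purely qualitative limit --- while the paper's computation buys the quantitative $O(h^2)$ control, which foreshadows the higher-order estimates needed for the $C^\infty$ theorem that follows; both are complete proofs of the stated proposition.
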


\begin{proof}
By Lax-Milgram theorem, there exists a unique $x_\ep^{(1)} \in \mathcal{X}$ verifying
\ref{eq_first_der}, and it clearly verifies 
$$\min (1,\ep) \|x_\ep^{(1)}\|_{A,b}^2 \leq \|x_\ep \|_b \Vert x_\ep^{(1)} \Vert_{A,b} \leq \min(1,\ep^{-\frac{1}{2}})\, \| y\|_\mathcal{Y} \Vert x_\ep^{(1)} \Vert_{A,b}.$$
It is a continuous function of $\ep$: indeed, for $\ep>0$ and $h\in\mathbb{R}$ s.t. $\ep - |h| >0$, we have, for all
$x \in \mathcal{X}$,
\begin{align*}
(A x_{\ep+h}^{(1)},Ax)_\mathcal{Y}   + (\ep+h)\, & b(x_{\ep+h}^{(1)},x) = -b (x_{\ep+h},x)\\
(A x_\ep^{(1)},Ax)_\mathcal{Y}  + \ep\ & b(x_\ep^{(1)},x)  = -b(x_\ep,x).
\end{align*}
Choosing $x = \tilde{x}_{\ep,h}^{(1)} := x_{\ep+h}^{(1)} - x_\ep^{(1)} \in\mathcal{X}$ and subtracting the two equations lead to
$$
\|A \tilde{x}_{\ep,h}^{(1)}\|_\mathcal{Y}^2 + \ep \| \tilde{x}_{\ep,h}^{(1)}\|_b^2 = -h b(x_{\ep+h}^{(1)},\tilde{x}_{\ep,h}^{(1)}) - b (\tilde{x}_{\ep,h},
 \tilde{x}_{\ep,h}^{(1)}). 
$$
Therefore, 
$$
\|\tilde{x}_{\ep,h}^{(1)} \|_{A,b} \leq h \Big( \min(1,(\ep+h)^{-3/2}) + \min(1, (\ep + h)^{-1/2}) \min(1,\ep^{-1} )  \Big)  \|y \|_\mathcal{Y}
$$
implying the continuity of the map $\mathbb{R}^+_* \ni \ep \mapsto x_{\ep}^{(1)} \in \mathcal{X}$. Remains to be proved that $F'(\ep) = x_{\ep}^{(1)}$. For $\ep> 0$ and $h \in \mathbb{R}$ such that $\ep - |h| >0$, we have
\begin{align*}
(A x_{\ep+h},Ax)_\mathcal{Y} &+ (\ep+h)\,b (x_{\ep+h},x)  = (y,Ax)_\mathcal{Y}\\
-(Ax_\ep,Ax)_\mathcal{Y} &- \ep\ b(x_\ep,x)  = -(y,Ax)_\mathcal{Y} \\
-h\, (A x_\ep^{(1)},Ax)_\mathcal{Y} & - h\, \ep\ b(x_\ep^{(1)},x)  = h\, b(x_\ep,x).
\end{align*}
Choosing $x = \hat{x}_{\ep,h} := x_{\ep+h} - x_\ep - h x_\ep^{(1)}$ and adding the three above relations lead to
$$
\|A \hat{x}_{\ep,h} \|_\mathcal{Y}^2 + \ep \|\hat{x}_{\ep,h} \|_b^2 = - hb (\tilde{x}_{\ep,h},\hat{x}_{\ep,h})
\Rightarrow \|\hat{x}_{\ep,h}\|_{A,b} \leq  h^2 \ C(h,\ep)\|y \|_\mathcal{Y}, \text{ with }  C(h,\ep) > c > 0.
$$
The result follows.
\end{proof}

A simple induction leads then to the following theorem:

\begin{theorem}
$F \in C^\infty(\mathbb{R}^+_*;\mathcal{X})$. For $\ep>0$, for all $m \in \mathbb{N}$, 
$$
\frac{d^m F}{d\ep^m}(\ep) := x^{(m)}_\ep
$$
with $x_\ep^{(m)}$ defined recursively by
$$
\left\lbrace
\begin{array}{l}
x_\ep^{(0)} := x_\ep, \\
\forall m\in \mathbb{N},\ x_\ep^{(m+1)} \text{ is the only element of } \mathcal{X} \text{ verifying }\\
 (A x_{\ep}^{(m+1)},A x)_\mathcal{Y} + \ep\, b(x_\ep^{(m+1)},x)  = - (m+1)\, b(x_\ep^{(m)},x),\ \forall x \in \mathcal{X}.
\end{array}
\right.
$$
In particular, $x_\ep^{(m)}$ verifies the following estimate:
$$
\|x_\ep^{(m)} \|_{A,b} \leq \frac{m\,!}{\min(1,\ep^{m+1/2})} \|y \|_\mathcal{Y}.
$$
\end{theorem}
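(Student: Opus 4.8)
The plan is to argue by induction on $m$, mirroring the two propositions just proved, which serve as the base case. I would take as induction hypothesis $H(m)$ the conjunction of three statements: that $x_\ep^{(0)},\dots,x_\ep^{(m)}$ are well defined through the recursion, that $F\in C^m(\mathbb{R}^+_*;\mathcal{X})$ with $F^{(k)}(\ep)=x_\ep^{(k)}$ for $0\le k\le m$, and that each satisfies $\|x_\ep^{(k)}\|_{A,b}\le \frac{k!}{\min(1,\ep^{k+1/2})}\|y\|_\mathcal{Y}$. The preceding two propositions establish $H(1)$ (the bound $\min(1,\ep^{-3/2})\|y\|_\mathcal{Y}$ obtained there being stronger than the stated one, since $\min(1,\ep^{-3/2})\le \max(1,\ep^{-3/2})=\frac{1}{\min(1,\ep^{3/2})}$). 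Assuming $H(m)$ with $m\ge 1$, the inductive step splits into four moves: define $x_\ep^{(m+1)}$, estimate it, show it is the derivative of $x_\ep^{(m)}$, and show it depends continuously on $\ep$.

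First, the bilinear form $a_\ep$ from the proof of Proposition~\ref{prop_QR11} is continuous and coercive with constant $\min(1,\ep)$, and $x\mapsto -(m+1)\,b(x_\ep^{(m)},x)$ is a bounded linear functional on $\mathcal{X}$, so Lax--Milgram produces the unique $x_\ep^{(m+1)}$. Testing its defining equation against $x=x_\ep^{(m+1)}$ gives $\min(1,\ep)\|x_\ep^{(m+1)}\|_{A,b}^2\le (m+1)\,\|x_\ep^{(m)}\|_b\,\|x_\ep^{(m+1)}\|_{A,b}$, hence $\min(1,\ep)\|x_\ep^{(m+1)}\|_{A,b}\le (m+1)\|x_\ep^{(m)}\|_{A,b}$; inserting the $H(m)$ bound and using the elementary identity $\min(1,\ep)\,\min(1,\ep^{m+1/2})=\min(1,\ep^{m+3/2})$ reproduces exactly the stated estimate at order $m+1$.

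The heart of the argument is showing $\frac{d}{d\ep}x_\ep^{(m)}=x_\ep^{(m+1)}$. Following the $m=1$ computation, for $h$ small I would write the defining relations for $x_{\ep+h}^{(m)}$, $-x_\ep^{(m)}$ and $-h\,x_\ep^{(m+1)}$, add them, and test against the defect $\hat x_{\ep,h}:=x_{\ep+h}^{(m)}-x_\ep^{(m)}-h\,x_\ep^{(m+1)}$. After the cancellations this should yield
\begin{equation*}
\|A\hat x_{\ep,h}\|_\mathcal{Y}^2+\ep\|\hat x_{\ep,h}\|_b^2=-h\,b\big(x_{\ep+h}^{(m)}-x_\ep^{(m)},\hat x_{\ep,h}\big)-m\,b\big(r_h,\hat x_{\ep,h}\big),
\end{equation*}
where $r_h:=x_{\ep+h}^{(m-1)}-x_\ep^{(m-1)}-h\,x_\ep^{(m)}$. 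Here is the crux: the $O(h)$ contributions coming from the $\ep$-dependent source term $-m\,b(x_{\ep+h}^{(m-1)},\cdot)$ and from $h(m+1)\,b(x_\ep^{(m)},\cdot)$ do not survive on their own, but the multiples of $b(x_\ep^{(m)},\cdot)$ cancel and the remainder collapses into $r_h$, which by $H(m)$ (differentiability of $x_\ep^{(m-1)}$ with derivative $x_\ep^{(m)}$) is $o(h)$. Together with continuity of $x_\ep^{(m)}$, which makes $x_{\ep+h}^{(m)}-x_\ep^{(m)}\to 0$, coercivity then gives $\|\hat x_{\ep,h}\|_{A,b}=o(h)$, i.e. differentiability with the announced derivative. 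I expect this bookkeeping---recognizing that the previous-level Taylor defect $r_h$ is precisely what is left over---to be the main obstacle; it is also the reason the induction hypothesis must carry differentiability at level $m-1$, not merely continuity.

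Finally, continuity of $\ep\mapsto x_\ep^{(m+1)}$ follows by subtracting its defining equations at $\ep+h$ and $\ep$ and testing against $x_{\ep+h}^{(m+1)}-x_\ep^{(m+1)}$: the right-hand side is bounded by $h\,\|x_{\ep+h}^{(m+1)}\|_b+(m+1)\|x_{\ep+h}^{(m)}-x_\ep^{(m)}\|_b$, which tends to $0$ by the local boundedness furnished by the estimate and the continuity of $x_\ep^{(m)}$. Combined with the previous move, $F^{(m)}=x_\ep^{(m)}$ is $C^1$ with derivative $x_\ep^{(m+1)}$, so $F\in C^{m+1}(\mathbb{R}^+_*;\mathcal{X})$ and $H(m+1)$ holds, closing the induction and yielding $F\in C^\infty(\mathbb{R}^+_*;\mathcal{X})$.
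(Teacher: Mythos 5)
Your proposal is correct and is exactly the argument the paper intends: the paper's entire proof is the remark that ``a simple induction'' on the two preceding propositions yields the theorem, and your inductive step --- Lax--Milgram existence, the coercivity estimate combined with $\min(1,\ep)\min(1,\ep^{m+1/2})=\min(1,\ep^{m+3/2})$, and the three-equation cancellation in which the source terms collapse into the previous-level Taylor defect $r_h$ --- reproduces the $m=1$ computations verbatim at general order. Your bookkeeping is sound, in particular the observation that the induction hypothesis must carry differentiability (not mere continuity) at level $m-1$, since $\|r_h\|_b=o(h)$ is precisely what closes the estimate $\min(1,\ep)\,\|\hat x_{\ep,h}\|_{A,b}^2\leq \bigl(|h|\,\|x_{\ep+h}^{(m)}-x_\ep^{(m)}\|_b+m\,\|r_h\|_b\bigr)\|\hat x_{\ep,h}\|_b$.
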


If the data completion problem admits a solution $x_s$,  it is not difficult to prove that 
$$\Vert x_\ep^{(m)} \Vert_{A,b} \leq \min(1,\ep^{m})^{-1} \, m!\, \Vert x_s\Vert_{A,b}.$$
Finally, we have the following generalization of corollary \ref{coro_1}:

\begin{corollary} \label{coro_2}
the three following properties are equivalent:
\begin{itemize}
\item[(i)] $y \neq 0$
\item[(ii)] $\exists \ep >0$, $\exists m \in \mathbb{N}$ s.t. $x_\ep^{(m)} \neq 0$
\item[(iii)] $\forall \ep >0$, $\forall m \in \mathbb{N}$, $x_\ep^{(m)} \neq 0$.
\end{itemize}
\end{corollary}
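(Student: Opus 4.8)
The plan is to prove the cycle (i) $\Rightarrow$ (iii) $\Rightarrow$ (ii) $\Rightarrow$ (i), following the pattern of Corollary \ref{coro_1} and using the recursion defining $x_\ep^{(m)}$ as the main tool. The implication (iii) $\Rightarrow$ (ii) is immediate, since $\mathbb{R}^+_*$ and $\mathbb{N}$ are nonempty. For (ii) $\Rightarrow$ (i) I would invoke the estimate $\Vert x_\ep^{(m)}\Vert_{A,b} \leq \frac{m!}{\min(1,\ep^{m+1/2})}\Vert y\Vert_\mathcal{Y}$: if $x_\ep^{(m)}\neq 0$ for some $\ep$ and $m$, its left-hand side is strictly positive, forcing $\Vert y\Vert_\mathcal{Y}>0$.

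All the substance lies in (i) $\Rightarrow$ (iii), which I would prove by contraposition: assuming $x_\ep^{(m)}=0$ for some $\ep>0$ and $m\in\mathbb{N}$, I would deduce $y=0$. The case $m=0$ is precisely Corollary \ref{coro_1}. For $m\geq 1$ the idea is to propagate the vanishing downward through the recursion until it reaches level $0$, alternating two elementary steps. First, substituting $x_\ep^{(m)}=0$ into its own defining equation gives $m\,b(x_\ep^{(m-1)},x)=0$ for all $x$, hence $\Vert x_\ep^{(m-1)}\Vert_b=0$. Second — the key observation — for any index $k\geq 1$ a vanishing $b$-seminorm forces the vector itself to vanish: testing the equation for $x_\ep^{(k)}$ against $x_\ep^{(k)}$ yields
$$
\Vert A x_\ep^{(k)}\Vert_\mathcal{Y}^2 + \ep\,\Vert x_\ep^{(k)}\Vert_b^2 = -k\,b(x_\ep^{(k-1)},x_\ep^{(k)}),
$$
whose right-hand side is dominated by $k\,\Vert x_\ep^{(k-1)}\Vert_b\,\Vert x_\ep^{(k)}\Vert_b = 0$ via Cauchy-Schwarz for $\Vert\cdot\Vert_b$; thus $A x_\ep^{(k)}=0$, and $x_\ep^{(k)}=0$ since $A$ is one-to-one. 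Alternating these two steps, a finite downward induction yields $x_\ep^{(1)}=0$, and then the defining equation at level $1$ gives $\Vert x_\ep\Vert_b=0$.

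Closing the argument at the bottom level is the delicate point, and the place where I expect the real difficulty, because the second step above fails for $x_\ep=x_\ep^{(0)}$: the quasi-reversibility equation carries the genuine data term $(y,Ax)_\mathcal{Y}$ instead of a $b$-term, so testing against $x_\ep$ only gives $\Vert A x_\ep\Vert_\mathcal{Y}^2=(y,Ax_\ep)_\mathcal{Y}$, not $A x_\ep=0$. Instead, inserting $\Vert x_\ep\Vert_b=0$ into the full quasi-reversibility equation leaves $(A x_\ep-y,Ax)_\mathcal{Y}=0$ for every $x\in\mathcal{X}$, and the density $\overline{\mathsf{Im}(A)}^\mathcal{Y}=\mathcal{Y}$ then forces $A x_\ep=y$. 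Hence $x_\ep$ is simultaneously the quasi-reversibility solution and an exact solution of the data completion problem; by the observation recorded before Corollary \ref{coro_1} (the exact solution coincides with $x_\ep$ only when $y=0$), I conclude $y=0$, completing the contraposition.

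As a cross-check, and perhaps a cleaner route, I would represent $b$ by the bounded self-adjoint operator $B$ on $(\mathcal{X},(\cdot,\cdot)_{A,b})$ defined by $b(u,x)=(Bu,x)_{A,b}$, which satisfies $0\leq B\leq I$. Writing $M_\ep=I+(\ep-1)B$, the quasi-reversibility solution is $x_\ep=M_\ep^{-1}\eta$ with $\eta$ the Riesz representative of $x\mapsto (y,Ax)_\mathcal{Y}$, and repeated differentiation gives $x_\ep^{(m)}=(-1)^m m!\,M_\ep^{-m}B^m x_\ep$ with $M_\ep$ invertible. Since $\ker B^m=\ker B$ for a self-adjoint $B$, one obtains $x_\ep^{(m)}=0\iff B x_\ep=0\iff\Vert x_\ep\Vert_b=0$, reducing at once to the same bottom-level step.
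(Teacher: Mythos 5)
Your proof is correct within the paper's framework and follows the same overall skeleton as the paper's proof — the implications (iii) $\Rightarrow$ (ii) and (ii) $\Rightarrow$ (i) are handled identically (triviality, then the a priori estimate on $\Vert x_\ep^{(m)}\Vert_{A,b}$), and (i) $\Rightarrow$ (iii) is proved by contraposition with a downward induction through the recursion. But your treatment diverges at exactly the two points where the paper's argument is too quick, and the divergence is substantive. The paper tests the equation defining $x_\ep^{(m)}$ against $x_\ep^{(m-1)}$, obtains $-m\Vert x_\ep^{(m-1)}\Vert_b^2 = 0$, writes ``we obtain $x_\ep^{(m-1)} = 0$'', and iterates down to $x_\ep = 0$, concluding via Corollary \ref{coro_1}. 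Since $\Vert \cdot \Vert_b$ is only a seminorm, that inference is not immediate: your ``key observation'' — testing the equation for $x_\ep^{(k)}$ against itself and using Cauchy--Schwarz for $b$ to get $A x_\ep^{(k)} = 0$, then injectivity of $A$ — is precisely the justification the paper omits, and it works exactly for $k \geq 1$, as you say. At the bottom level you correctly note the mechanism breaks (the level-$0$ equation carries $(y,Ax)_\mathcal{Y}$ rather than a $b$-term), and you close by a genuinely different route than the paper: $b(x_\ep,\cdot) \equiv 0$ plus density of $\mathsf{Im}(A)$ give $A x_\ep = y$, and the remark preceding Corollary \ref{coro_1} then gives $y = 0$; the paper instead asserts $x_\ep = 0$ at this level, which does not follow from $\Vert x_\ep \Vert_b = 0$ alone. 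Your operator-theoretic cross-check ($x_\ep^{(m)} = (-1)^m m!\, M_\ep^{-m} B^m x_\ep$ with $M_\ep$ invertible and $\ker B^m = \ker B$) is sound and makes the structure transparent: the whole corollary reduces to whether $\Vert x_\ep \Vert_b = 0$ forces $y = 0$.

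One caveat you should record: the remark you invoke is asserted in the paper without proof (``it is easily seen''), and it is in fact equivalent to $\Vert \cdot \Vert_b$ having trivial kernel. If some $x_s \neq 0$ with $\Vert x_s \Vert_b = 0$ satisfies $A x_s = y$, then $x_\ep = x_s$ for every $\ep$, all derivatives $x_\ep^{(m)}$, $m \geq 1$, vanish, and (i) $\Rightarrow$ (iii) fails; for the elliptic choice of $b$ in section 5 this actually occurs for the admissible data $f = 0$, $g_N = 0$, $g_D$ a nonzero constant, whose exact solution is a constant paired with the zero flux. So your final step — like the paper's, but more visibly — relies on a nondegeneracy of $b$ that the stated hypotheses do not guarantee. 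This is not a defect relative to the paper: the paper's proof buries the same reliance inside the unjustified inference $\Vert x \Vert_b = 0 \Rightarrow x = 0$, whereas your proof isolates it as the single remaining point, which is a genuine improvement in rigor. Taking the pre-Corollary-\ref{coro_1} remark as established, your argument is complete.
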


\begin{proof}
Clearly (iii) implies (ii). Furthermore, as $
\displaystyle  \min(1, \ep^{m+\frac{1}{2}})\, \|x_\ep^{(m)} \|_{A,b} \leq m\,!\,  \|y \|_\mathcal{Y}
$, (ii) implies~(i).

Suppose it exists $\ep >0$ and $m \in \mathbb{N}$ such that $x_\ep^{(m)}= 0$. If $m=0$, then corollary \ref{coro_1} implies $y=0$. If $m >0$, as $(Ax_{\ep}^{(m)},Ax_{\ep}^{(m-1)})_\mathcal{Y} + \ep\, b(x_\ep^{(m)},x_\ep^{(m-1)})
 = - m \|x_\ep^{(m-1)}\|_b^2$, we obtain $x_\ep^{(m-1)}=0$, and by induction $x_\ep = 0$, implying again $y=0$. Therefore (i) implies (iii). 
\end{proof}

\subsection{Monotonic convergence of the quasi-reversibility method}

In this section, $y \neq 0$.
Using the results on the derivatives of $x_\ep$ with respect to $\ep$, it is easy to prove that if the data completion problem
admits a solution $x_s$, then $x_\ep$ converges monotonically to $x_s$ when $\ep$ goes to zero.
This is of course not the only method to obtain such results (see for example \cite{Engl}, where spectral theory is used), but
it has the advantage to be quite simple.

The main result of this section is the following

\begin{theorem} \label{thm_monotonic_convergence}
Suppose the data completion problem admits a unique solution $x_s$. Then $\Vert x_\ep - x_s \Vert_{A,b}$ is strictly increasing with respect to $\ep$.
\end{theorem}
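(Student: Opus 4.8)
The plan is to reduce the statement to a single first-order sign computation. Since $F\colon\ep\mapsto x_\ep$ is $C^\infty$ on $\mathbb{R}^+_*$, the scalar function $\phi(\ep):=\|x_\ep-x_s\|_{A,b}^2$ is differentiable, and it suffices to show $\phi'(\ep)>0$ for every $\ep>0$; this gives that $\phi$, and hence $\sqrt{\phi}=\|x_\ep-x_s\|_{A,b}$, is strictly increasing. Writing $e_\ep:=x_\ep-x_s$ and using $F'(\ep)=x_\ep^{(1)}$, the derivative is $\phi'(\ep)=2\,(x_\ep^{(1)},e_\ep)_{A,b}=2\big[(Ax_\ep^{(1)},Ae_\ep)_\mathcal{Y}+b(x_\ep^{(1)},e_\ep)\big]$.

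To give this a definite sign I would feed well-chosen test functions into the two variational identities already available. Because $Ax_s=y$, the quasi-reversibility equation tested against an arbitrary $x$ reads $(Ae_\ep,Ax)_\mathcal{Y}=-\ep\,b(x_\ep,x)$; taking $x=x_\ep^{(1)}$ gives the cross term $(Ax_\ep^{(1)},Ae_\ep)_\mathcal{Y}=-\ep\,b(x_\ep,x_\ep^{(1)})$. Next, equation \eqref{eq_first_der} tested against $x=x_\ep^{(1)}$ gives the key identity $\|Ax_\ep^{(1)}\|_\mathcal{Y}^2+\ep\,\|x_\ep^{(1)}\|_b^2=-b(x_\ep,x_\ep^{(1)})$, while tested against $x=e_\ep$ and combined with the cross term it yields $b(x_\ep^{(1)},e_\ep)=b(x_\ep,x_\ep^{(1)})-\frac{1}{\ep}b(x_\ep,e_\ep)$. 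Finally \eqref{ref_eq1_proof_thm1} gives $b(x_\ep,e_\ep)=-\frac{1}{\ep}\|Ae_\ep\|_\mathcal{Y}^2$. Assembling these, I expect to arrive at
$$\frac12\,\phi'(\ep)=(\ep-1)\,s+\frac{1}{\ep^2}\,\|Ae_\ep\|_\mathcal{Y}^2,\qquad s:=\|Ax_\ep^{(1)}\|_\mathcal{Y}^2+\ep\,\|x_\ep^{(1)}\|_b^2.$$

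The main obstacle is that $(\ep-1)s$ is negative for $\ep<1$, so positivity of $\phi'$ is not manifest from this formula; this is where the argument really happens. I would close the gap with a Cauchy--Schwarz estimate on the cross term: since $(Ax_\ep^{(1)},Ae_\ep)_\mathcal{Y}=\ep\,s$ and $|(Ax_\ep^{(1)},Ae_\ep)_\mathcal{Y}|\le\|Ax_\ep^{(1)}\|_\mathcal{Y}\,\|Ae_\ep\|_\mathcal{Y}$, one gets $\|Ae_\ep\|_\mathcal{Y}^2\ge \ep^2 s^2/\|Ax_\ep^{(1)}\|_\mathcal{Y}^2$, whence $\frac12\phi'(\ep)\ge s\big(\ep-1+s/\|Ax_\ep^{(1)}\|_\mathcal{Y}^2\big)$. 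Because $s/\|Ax_\ep^{(1)}\|_\mathcal{Y}^2=1+\ep\,\|x_\ep^{(1)}\|_b^2/\|Ax_\ep^{(1)}\|_\mathcal{Y}^2\ge 1$, the parenthesis is at least $\ep>0$, so $\phi'(\ep)>0$. This division and the strict inequality require $\|Ax_\ep^{(1)}\|_\mathcal{Y}>0$ and $s>0$; both follow from the standing hypothesis $y\neq0$, which forces $x_\ep^{(1)}\neq0$ by corollary \ref{coro_2}, combined with the injectivity of $A$.
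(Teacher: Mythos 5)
Your proof is correct, and it takes a genuinely different route from the paper. The paper splits $\Vert x_\ep - x_s\Vert_{A,b}^2$ into $\Vert A x_\ep - y\Vert_\mathcal{Y}^2$ and $\Vert x_\ep - x_s\Vert_b^2$ and proves each piece strictly increasing separately: the residual part via $\frac{d}{d\ep}\frac12\Vert Ax_\ep - y\Vert_\mathcal{Y}^2 = -\ep\, b(x_\ep, x_\ep^{(1)})>0$, and the seminorm part via a second-order argument, showing that $g(\ep)=\frac12\Vert x_\ep - x_s\Vert_b^2$ satisfies $\frac{d}{d\ep}\bigl(\ep^2 g'(\ep)\bigr) = \ep^2\Vert x_\ep^{(1)}\Vert_b^2 + \ep^2\, b(x_\ep, x_\ep^{(2)}) > 0$ (this is where the alternating-sign lemma \ref{lemma_sign_b} enters, with the pair $(0,2)$), then integrating and using $\ep^2 g'(\ep)\to 0$ as $\ep\to 0$ to conclude $g'>0$. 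You instead differentiate the full $A,b$-norm once, use the three variational identities to get the closed formula $\frac12\phi'(\ep) = (\ep-1)\,s + \ep^{-2}\Vert Ae_\ep\Vert_\mathcal{Y}^2$ with $s = \Vert Ax_\ep^{(1)}\Vert_\mathcal{Y}^2 + \ep\Vert x_\ep^{(1)}\Vert_b^2 = -b(x_\ep,x_\ep^{(1)})$, and then close with Cauchy--Schwarz applied to the exact cross term $(Ax_\ep^{(1)}, Ae_\ep)_\mathcal{Y} = \ep s$; I checked all four identities and the final estimate $\frac12\phi'(\ep)\ge \ep\, s > 0$, and they are sound, with the needed nondegeneracy $x_\ep^{(1)}\neq 0$ (hence $Ax_\ep^{(1)}\neq 0$ by injectivity) correctly supplied by corollary \ref{coro_2} and the standing hypothesis $y\neq 0$. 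What each approach buys: your argument needs only the first derivative $x_\ep^{(1)}$, entirely bypasses the inductive lemma \ref{lemma_sign_b}, the second derivative $x_\ep^{(2)}$, and the limiting argument at $\ep=0$ (so it does not even use convergence $x_\ep\to x_s$), and it yields the quantitative lower bound $\phi'(\ep)\ge 2\ep\bigl(\Vert Ax_\ep^{(1)}\Vert_\mathcal{Y}^2 + \ep\Vert x_\ep^{(1)}\Vert_b^2\bigr)$; the paper's route is longer but proves the strictly finer statement that the residual $\Vert Ax_\ep - y\Vert_\mathcal{Y}$ and the error seminorm $\Vert x_\ep - x_s\Vert_b$ are each separately strictly increasing, and the sign lemma it develops is reused later in the analysis of the iterated method (proposition \ref{prop_allestimXem}), so it is not wasted machinery in context.
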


We need to prove first the following two results, which are true whether or not the data completion problem admits a solution:

\begin{lemma} \label{lemma_sign_b}
For all $m \in \mathbb{N}$, for all $n \in \mathbb{N}$, $\displaystyle (-1)^{m+n} b(x_\ep^{(m)},x_\ep^{(n)})> 0$.
\end{lemma}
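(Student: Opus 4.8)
The plan is to first dispose of the diagonal case $m=n$ and then reduce the whole statement to a single positivity question about a self-adjoint operator. For the diagonal, observe that $(-1)^{2m}b(x_\ep^{(m)},x_\ep^{(m)})=\Vert x_\ep^{(m)}\Vert_b^2$, so I only need $\Vert x_\ep^{(m)}\Vert_b>0$. Suppose $\Vert x_\ep^{(m)}\Vert_b=0$ for some $m$. Since $b$ is a non-negative symmetric form, Cauchy--Schwarz for the induced seminorm gives $b(x_\ep^{(m)},x)=0$ for every $x\in\mathcal X$; the defining relation for $x_\ep^{(m+1)}$ then reads $(Ax_\ep^{(m+1)},Ax)_\mathcal Y+\ep\,b(x_\ep^{(m+1)},x)=0$ for all $x$, and testing with $x=x_\ep^{(m+1)}$ forces $Ax_\ep^{(m+1)}=0$ and $\Vert x_\ep^{(m+1)}\Vert_b=0$, hence $x_\ep^{(m+1)}=0$ by injectivity of $A$. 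By Corollary \ref{coro_2} this contradicts $y\neq0$. So $\Vert x_\ep^{(m)}\Vert_b>0$ for every $m$; in particular I record the two facts $\Vert x_\ep\Vert_b>0$ and $\Vert x_\ep^{(1)}\Vert_b>0$.

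For the off-diagonal case I would work in the Hilbert space $(\mathcal X,(\cdot,\cdot)_{A,b})$. Let $B$ be the bounded self-adjoint operator defined by $(Bx,\tilde x)_{A,b}=b(x,\tilde x)$ and let $G_\ep$ be defined by $(G_\ep x,\tilde x)_{A,b}=(Ax,A\tilde x)_\mathcal Y+\ep\,b(x,\tilde x)$. Then $0\le B\le\mathrm{Id}$, and $G_\ep=\mathrm{Id}+(\ep-1)B\ge\min(1,\ep)\,\mathrm{Id}$ is boundedly invertible (this is just the coercivity already used in Proposition \ref{prop_QR11}). Rewriting the recursive definition as $G_\ep x_\ep^{(m+1)}=-(m+1)B x_\ep^{(m)}$ yields the closed form $x_\ep^{(m)}=(-1)^m m!\,(G_\ep^{-1}B)^m x_\ep$. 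Writing $b(x_\ep^{(m)},x_\ep^{(n)})=(Bx_\ep^{(m)},x_\ep^{(n)})_{A,b}$, substituting this form, and moving factors across the inner product by self-adjointness, the intermediate operators telescope, $(BG_\ep^{-1})^n B(G_\ep^{-1}B)^m=B(G_\ep^{-1}B)^{m+n}$, so that with $K_\ep:=B^{1/2}G_\ep^{-1}B^{1/2}\ge0$ and $\tilde w:=B^{1/2}x_\ep$ one obtains the key identity $(-1)^{m+n}b(x_\ep^{(m)},x_\ep^{(n)})=m!\,n!\,(K_\ep^{\,m+n}\tilde w,\tilde w)_{A,b}$. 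Remarkably the right-hand side depends on $m$ and $n$ only through $m+n$, which reduces the lemma to showing $(K_\ep^{\,\ell}\tilde w,\tilde w)_{A,b}>0$ for every $\ell\ge0$.

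To finish, I would use that $K_\ep$ is self-adjoint and positive semidefinite, so $\ker K_\ep^{\,j}=\ker K_\ep$ for every $j\ge1$. The two diagonal facts translate into $(\tilde w,\tilde w)_{A,b}=\Vert x_\ep\Vert_b^2>0$ and $(K_\ep^{\,2}\tilde w,\tilde w)_{A,b}=\Vert K_\ep\tilde w\Vert_{A,b}^2=\Vert x_\ep^{(1)}\Vert_b^2>0$, the latter because $K_\ep\tilde w=-B^{1/2}x_\ep^{(1)}$. Hence $K_\ep\tilde w\neq0$, so $K_\ep^{\,j}\tilde w\neq0$ for all $j$; for even $\ell=2j$ this gives $(K_\ep^{\,\ell}\tilde w,\tilde w)_{A,b}=\Vert K_\ep^{\,j}\tilde w\Vert_{A,b}^2>0$, and for odd $\ell=2j+1$ the quantity $(K_\ep K_\ep^{\,j}\tilde w,K_\ep^{\,j}\tilde w)_{A,b}$ vanishes only if $K_\ep^{\,j+1}\tilde w=0$, which is excluded by $\ker K_\ep^{\,j+1}=\ker K_\ep$ and $K_\ep\tilde w\neq0$. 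This proves the claim.

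I expect the main obstacle to be the passage from non-strict to strict positivity uniformly in $m+n$. A direct induction on $m+n$ using the recurrence $(Ax_\ep^{(m+1)},Ax_\ep^{(n)})_\mathcal Y+\ep\,b(x_\ep^{(m+1)},x_\ep^{(n)})=-(m+1)b(x_\ep^{(m)},x_\ep^{(n)})$ only controls the \emph{sum} of the two energy contributions and cannot separate their signs, which is precisely why I pass to the operator $K_\ep$; the telescoping identity then makes $(-1)^{m+n}b(x_\ep^{(m)},x_\ep^{(n)})\ge0$ essentially automatic, and ruling out equality for all $\ell$ at once is where $y\neq0$ genuinely enters, through Corollary \ref{coro_2} via the single extra fact $\Vert x_\ep^{(1)}\Vert_b>0$ (equivalently $\tilde w\notin\ker K_\ep$). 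A secondary technical point is justifying the square root $B^{1/2}$ and the telescoping manipulation, both of which are routine once $G_\ep^{-1}$ is known to be bounded, self-adjoint and positive.
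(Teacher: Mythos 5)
Your proof is correct, and it reaches the lemma by a genuinely different route than the paper. The paper argues by a double induction: with hypothesis $P(M)$ that $(-1)^{M+n}b(x_\ep^{(M)},x_\ep^{(n)})>0$ for all $n\in\{0,\dots,M\}$, it treats the cases $k=M+1$, $k=M$ and $k<M$ separately, the key computation being the index shift $b(x_\ep^{(M+1)},x_\ep^{(k)})=\frac{M+1}{k+1}\,b(x_\ep^{(M)},x_\ep^{(k+1)})$ obtained from the defining variational relations, which transports strict positivity down the induction. Your identity $(-1)^{m+n}b(x_\ep^{(m)},x_\ep^{(n)})=m!\,n!\,(K_\ep^{m+n}\tilde w,\tilde w)_{A,b}$, built from the closed form $x_\ep^{(m)}=(-1)^m m!\,(G_\ep^{-1}B)^m x_\ep$, is exactly that index shift resolved once and for all (the factor $(M+1)/(k+1)$ is the ratio of the factorials), and it proves strictly more than the lemma: the quantity depends on $(m,n)$ only through $m+n$, a Hankel--moment positivity that the paper's induction never makes visible. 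In effect you carry out the spectral-theoretic analysis that the paper explicitly sets aside at the start of section 3; the cost is the (routine) machinery of $B^{1/2}$, $G_\ep^{-1}$ and the kernel stability $\ker K_\ep^{\,j}=\ker K_\ep$, the gain is a cleaner separation between the algebra (telescoping, sign bookkeeping) and the single analytic input $y\neq0$. On that input both proofs ultimately lean on Corollary \ref{coro_2}, but your diagonal step is in fact more careful than the paper's: the paper's base case reads ``$P(0)$ is true, as $y\neq0\Rightarrow x_\ep\neq0$'', which tacitly identifies $x_\ep\neq0$ with $\Vert x_\ep\Vert_b\neq0$ even though $\Vert\cdot\Vert_b$ is only a seminorm; your argument (if $\Vert x_\ep^{(m)}\Vert_b=0$ then $b(x_\ep^{(m)},\cdot)\equiv0$ by Cauchy--Schwarz, whence $Ax_\ep^{(m+1)}=0$ and so $x_\ep^{(m+1)}=0$ by injectivity of $A$, contradicting Corollary \ref{coro_2}) closes precisely that gap, and the same care reappears in the paper's case $k=M$, where strictness again silently requires $x_\ep^{(M+1)}\neq0$ from Corollary \ref{coro_2}.
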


\begin{proof}
For $m\in \mathbb{N}$, let us define the axiom of induction:
$$
P(m):\quad \forall n\in \left\lbrace 0,\cdots, m \right\rbrace,\ (-1)^{m+n} b(x_\ep^{(m)},x_\ep^{(n)}) > 0.
$$
Obviously, $P(0)$ is true, as $y\neq 0 \Rightarrow x_\ep \neq 0$. 

Suppose $P(M)$ is true for some $M \in \mathbb{N}$. Let $k$ be in $\left\lbrace 0,\cdots , M+1 \right\rbrace$. 
\begin{itemize}
\item[$\bullet$] if $k = M+1$, then $\displaystyle (-1)^{2M+2} b(x_\ep^{(M+1)},x_\ep^{(M+1)})  = \|x_\ep^{(M+1)} \|_b^2> 0$ (as $y\neq 0$)
\item[$\bullet$] if $k = M$, then, by definition of $x_\ep^{(M+1)}$,
$$
(-1)^{2M+1} b(x_\ep^{(M+1)},x_\ep^{(M)}) = - b(x_\ep^{(M+1)},x_\ep^{(M)})
 = \frac{\|A x_\ep^{(M+1)} \|_\mathcal{Y}^2 + \ep \|x_\ep^{(M+1)} \|_b^2}{M+1} > 0
$$
\item[$\bullet$] if $k < M$, then, using successively the definition of $x_\ep^{(k+1)}$ and $x_\ep^{(M+1)}$, we obtain
\begin{align*}
b(x_\ep^{(M+1)},x_\ep^{(k)}) & = \frac{-1}{k+1}\Big((Ax_\ep^{(M+1)},A x_\ep^{(k+1)})_\mathcal{Y} 
+ \ep b(x_\ep^{(M+1)}, x_\ep^{(k+1)})  \Big) \\
 & = \frac{M+1}{k+1} (x_\ep^{(M)},x_\ep^{(k+1)})_b.
\end{align*}
As $k+1 \in \left\lbrace 0,\ldots , M \right\rbrace$, $P(M)$ implies 
$$
(-1)^{M+k+1} b(x_\ep^{(M)},x_\ep^{(k+1)}) > 0 \Rightarrow(-1)^{M+k+1} b(x_\ep^{(M+1)},x_\ep^{(k)})>0.
$$
\end{itemize}
\end{proof}

\begin{proposition}
The quantity $\|A x_\ep - y\|_\mathcal{Y}$ is a strictly increasing function of $\ep$.
\end{proposition}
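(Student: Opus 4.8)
The plan is to prove the stronger statement that the square $g(\ep) := \|A x_\ep - y\|_\mathcal{Y}^2$ is strictly increasing, and then recover the claim for $\|A x_\ep - y\|_\mathcal{Y}$ itself by composing with the strictly increasing map $t \mapsto \sqrt{t}$ on $[0,\infty)$; strict monotonicity of $g$ alone already forces strict monotonicity of $\sqrt{g}$. Since $y \neq 0$ in this section, corollary \ref{coro_1} guarantees $x_\ep \neq 0$, and (as noted just before that corollary) $A x_\ep \neq y$ for every $\ep>0$, so $g(\ep)>0$ throughout, which is reassuring but not essential to the argument.

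First I would use the regularity established above: $F \in C^\infty(\mathbb{R}^+_*;\mathcal{X})$ with $F'(\ep) = x_\ep^{(1)}$, and since $A$ is linear and continuous, $\ep \mapsto A x_\ep$ is differentiable with derivative $A x_\ep^{(1)}$. Hence $g$ is differentiable with
\begin{equation*}
g'(\ep) = 2\,(A x_\ep - y, A x_\ep^{(1)})_\mathcal{Y}.
\end{equation*}

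Next I would eliminate the inner product $(A x_\ep - y, A x_\ep^{(1)})_\mathcal{Y}$ using the quasi-reversibility equation. Testing that equation against $x = x_\ep^{(1)}$ gives $(A x_\ep, A x_\ep^{(1)})_\mathcal{Y} - (y, A x_\ep^{(1)})_\mathcal{Y} = -\ep\, b(x_\ep, x_\ep^{(1)})$, that is,
\begin{equation*}
g'(\ep) = -2\ep\, b(x_\ep, x_\ep^{(1)}).
\end{equation*}
At this point the sign is decided entirely by lemma \ref{lemma_sign_b}: with $m=0$ and $n=1$ it yields $(-1)^{1} b(x_\ep^{(0)}, x_\ep^{(1)}) > 0$, i.e. $b(x_\ep, x_\ep^{(1)}) < 0$. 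Since $\ep > 0$, we conclude $g'(\ep) > 0$ for every $\ep > 0$, so $g$ is strictly increasing, and the claim follows.

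I do not expect a genuine obstacle here. The only points requiring care are that the squared norm inherits differentiability from $F$, which is immediate from the $C^1$ regularity and the continuity of $A$, and that testing the quasi-reversibility equation with the derivative $x_\ep^{(1)}$ converts $g'$ into a multiple of $b(x_\ep, x_\ep^{(1)})$, whose sign is handed to us by lemma \ref{lemma_sign_b}. The real content of the result is thus already packaged in that sign lemma; the present proposition reduces to a single differentiation once the correct test function is chosen.
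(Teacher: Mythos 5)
Your proof is correct and is essentially the paper's own argument: the paper likewise sets $g(\ep) := \frac{1}{2}\|A x_\ep - y\|_\mathcal{Y}^2$, differentiates using $F'(\ep) = x_\ep^{(1)}$, tests the quasi-reversibility equation with $x_\ep^{(1)}$ to get $g'(\ep) = -\ep\, b(x_\ep, x_\ep^{(1)})$, and concludes positivity from lemma \ref{lemma_sign_b}. Your only additions are cosmetic (the factor of $2$ instead of $\frac{1}{2}$, and the explicit remark that strict monotonicity of the squared norm transfers to the norm via $t \mapsto \sqrt{t}$), which the paper leaves implicit.
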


\begin{proof}
Defining $\displaystyle g : \ep \in \mathbb{R}^+_* \mapsto \frac{1}{2} \|A x_\ep -y \|_\mathcal{Y}^2$, we have
$$
g'(\ep) = (Ax_\ep - y, x_\ep^{(1)})_\mathcal{Y} = -\ep b(x_\ep,x_\ep^{(1)}) >0.
$$
\end{proof}

\begin{proof}[Proof of theorem \ref{thm_monotonic_convergence}]
define $ \displaystyle g := \ep \in \mathbb{R}^+_*\mapsto \frac{1}{2} \|x_\ep - x_s \|_b^2$. We have $g'(\ep) = b(x_\ep - x_s,x_\ep^{(1)})$ and $g''(\ep) = \|x_\ep^{(1)} \|^2_b
 + b(x_\ep - x_s,x_\ep^{(2)})$. Therefore
 \begin{align*}
 \ep g''(\ep) & = \ep \|x_\ep^{(1)} \|_b^2 + \ep b(x_\ep - x_s,x_\ep^{(2)}) \\
  & = \ep \|x_\ep^{(1)} \|_b^2 - (A(x_\ep - x_s),Ax_\ep^{(2)})_\mathcal{Y} - 2 b(x_\ep - x_s,x_\ep^{(1)})
   \text{ ( definition of $x_\ep^{(2)}$)} \\
    & = \ep \|x_\ep^{(1)} \|_b^2 + \ep b(x_\ep,x_\ep^{(2)}) - 2 b(x_\ep - x_s,x_\ep^{(1)})
    \text{ ( definition of $x_\ep$ and $Ax_s = y$)} \\
    & = \ep \|x_\ep^{(1)} \|_b^2 + \ep b(x_\ep,x_\ep^{(2)})- 2 g'(\ep).
 \end{align*}
So $g$ verifies the following ODE: $\ep g''(\ep) + 2 g'(\ep) = \ep \|x_\ep^{(1)} \|_b^2 + \ep b(x_\ep,x_\ep^{(2)})$,
that is 
$$\frac{d}{d\ep} (\ep^2 g'(\ep)) =  \ep^2 \|x_\ep^{(1)} \|_b^2 + \ep^2 b(x_\ep,x_\ep^{(2)})>0.$$
Therefore, $\ep^2g'(\ep)$ is a strictly increasing function. As $\|x_\ep - x_s \|_b\leq \|x_s\|_b$
and $\|x_\ep^{(1)} \|_b \leq\displaystyle \frac{1}{\min(1,\ep)} \|x_s\|_b$, we have
$$
|\ep^2 g'(\ep)| = |\ep^2 b(x_\ep-x_s,x_\ep^{(1)})| \leq \ep^2 \|x_\ep - x_s\|_b \|x_\ep^{1}\|_b \leq \ep \|x_s\|_b
\xrightarrow[\ep \rightarrow 0]{} 0,
$$
which leads to $\ep^2 g'(\ep) >0 \Rightarrow g'(\ep) >0$, which ends the demonstration, as $\Vert x_\ep - x_s \Vert_{A,b}
 = \sqrt{ \Vert A x_\ep - y \Vert_\mathcal{Y}^2 + \Vert x_\ep - x_s \Vert_b^2} $.
\end{proof}

\section{Iterated quasi-reversibility}

As seen in the previous section, the quasi-reversibility method can be viewed as a Tykhonov regularization
of our abstract data completion problem. Therefore, it seems natural to study a well-known extension of such
regularization, namely the iterated Tykhonov regularization method, to our problem: we then obtain the
\textit{iterated quasi-reversibility method}.

The iterated quasi-reversibility method consists in solving iteratively  quasi-reversibility problems, each
one depending on the solution of the previous one. More precisely, we define a sequence of quasi-reversibility
solutions by induction : $X_\ep^{-1} = 0_\mathcal{X}$ and for all $M \in \mathbb{N}$, $X_\ep^{M} $ is the unique
element of $\mathcal{X}$ verifying
$$
(A X_\ep^M, A x)_\mathcal{Y} + \ep b(X_\ep^M,x) = (y,Ax)_\mathcal{Y} + \ep b(X_\ep^{M-1},x),\quad \forall x \in \mathcal{X}.
$$
It is not difficult to verify that the sequence is well-defined. In particular, it is clear that
$X_\ep^0 = x_\ep$, solution of the quasi-reversibility problem.

Our study of the iterated quasi-reversibility method is based on the following result, 
which highlighted  the  link between the solutions of the iterated quasi-reversibility method
$(X_\ep^M)_{M \in \left\lbrace -1 \right\rbrace\cup \mathbb{N}}$ and the derivatives of $x_\ep$
with respect to the parameter of regularization $\ep$:

\begin{theorem}
For all $\ep > 0$, for all $M \in \left\lbrace-1\right\rbrace \cup\mathbb{N}$, we have $$X_\ep^M  = \sum_{m=0}^M (-1)^m 
\frac{\ep^m}{m\,!} x_\ep^{(m)}.$$
\end{theorem}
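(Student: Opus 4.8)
The plan is to argue by induction on $M$, leveraging the uniqueness of the solution of each quasi-reversibility variational problem. Concretely, I would introduce the candidate
\[
Y_\ep^M := \sum_{m=0}^M (-1)^m \frac{\ep^m}{m!}\, x_\ep^{(m)}
\]
and show that the sequence $(Y_\ep^M)_{M \in \{-1\}\cup\mathbb{N}}$ obeys \emph{exactly} the same defining recursion and the same initial condition as $(X_\ep^M)_{M \in \{-1\}\cup\mathbb{N}}$. Since each iterate is characterized as the solution of a coercive variational problem governed by the bilinear form $a_\ep(u,x) := (Au,Ax)_\mathcal{Y} + \ep\, b(u,x)$ (coercive by the proof of Proposition \ref{prop_QR11}), Lax--Milgram guarantees uniqueness, so matching the recursion forces $Y_\ep^M = X_\ep^M$ for every $M$. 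The base cases are immediate: $Y_\ep^{-1}=0=X_\ep^{-1}$ since the sum is empty, and $Y_\ep^0 = x_\ep^{(0)} = x_\ep = X_\ep^0$.

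For the core step I would fix $x \in \mathcal{X}$ and expand $a_\ep(Y_\ep^M,x)$ by linearity of $a_\ep$ in its first argument. The $m=0$ term equals $a_\ep(x_\ep,x) = (y,Ax)_\mathcal{Y}$ by the very definition of $x_\ep$. For $m \geq 1$, the recursion defining the derivatives yields $a_\ep(x_\ep^{(m)},x) = -m\, b(x_\ep^{(m-1)},x)$, so that
\[
a_\ep(Y_\ep^M,x) = (y,Ax)_\mathcal{Y} + \sum_{m=1}^M (-1)^m \frac{\ep^m}{m!}\,\bigl(-m\, b(x_\ep^{(m-1)},x)\bigr).
\]
Using $\tfrac{m}{m!} = \tfrac{1}{(m-1)!}$ and reindexing with $k=m-1$ converts the remaining sum into $\ep \sum_{k=0}^{M-1} (-1)^k \tfrac{\ep^k}{k!}\, b(x_\ep^{(k)},x) = \ep\, b(Y_\ep^{M-1},x)$. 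Hence $Y_\ep^M$ satisfies $a_\ep(Y_\ep^M,x) = (y,Ax)_\mathcal{Y} + \ep\, b(Y_\ep^{M-1},x)$ for all $x \in \mathcal{X}$, which is precisely the relation defining $X_\ep^M$ from $X_\ep^{M-1}$. With the inductive hypothesis $Y_\ep^{M-1}=X_\ep^{M-1}$ and uniqueness, this closes the induction.

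I do not expect a genuine obstacle here: the only point requiring care is the combinatorial bookkeeping. The decisive observation is that the factor $-m$ produced by the derivative recursion is exactly what cancels one factorial, turning $\ep^m/m!$ into $\ep\cdot\ep^{m-1}/(m-1)!$ and shifting the whole sum down by one index so that it reassembles into $\ep\, b(Y_\ep^{M-1},\cdot)$. There is no analytic difficulty beyond this algebraic identity; once it is in place, the appeal to uniqueness of the Lax--Milgram solution finishes the argument and gives $X_\ep^M = \sum_{m=0}^M (-1)^m \tfrac{\ep^m}{m!} x_\ep^{(m)}$ for all $\ep>0$ and all $M \in \{-1\}\cup\mathbb{N}$.
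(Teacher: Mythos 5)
Your proof is correct and follows essentially the same route as the paper: both verify that the partial sum $\sum_{m=0}^M (-1)^m \frac{\ep^m}{m!} x_\ep^{(m)}$ satisfies the defining recursion of $X_\ep^M$ by multiplying the variational identity for $x_\ep^{(m)}$ by $(-1)^m \ep^m/m!$, summing and reindexing, and then conclude by induction together with Lax--Milgram uniqueness. Your write-up is if anything slightly more explicit than the paper's about where uniqueness enters; no gap.
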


\begin{proof}
Denote $\displaystyle \tilde{X}_\ep^{M} := \sum_{m=0}^M (-1)^m 
\frac{\ep^m}{m\,!} x_\ep^{(m)}$. For $M=-1$, the sum is empty, therefore we have $\tilde{X}_\ep^{-1} = 0_\mathcal{X} = X_{\ep}^{-1}$.
For $M = 0$, we also have $\tilde{X}_\ep^0 = x_\ep = X_\ep^0$. Finally, for $M \geq 1$ and $1\leq m \leq M+1$, in virtue of the  definition of 
$x_\ep^{(m)}$, we have for all $x \in \mathcal{X}$
\begin{align*}
\Big( A \left( (-1)^m \frac{\ep^m}{m!} x_\ep^{(m)}\right) ,A x \Big)_\mathcal{Y}  &+ \ep\ b\Big( \left( (-1)^m \frac{\ep^m}{m!} x_\ep^{(m)}\right)
, x \Big)\\
  &\quad \quad = \ep\ b \Big(\left( (-1)^{m-1} \frac{\ep^{m-1}}{(m-1)!} x_\ep^{(m-1)}\right) , x \Big).
\end{align*}
Summing for $m=1$ to $M+1$, and adding the equation verified by $x_\ep^{(0)} = x_\ep$, we obtain
$$
(A \tilde{X}_\ep^{M+1}, Ax)_\mathcal{Y} + \ep b (\tilde{X}_\ep^{M+1},x) = (f,Ax)_\mathcal{Y} + \ep b( \tilde{X}_\ep^M,x).
$$
A straightforward induction ends the proof.
\end{proof}

%Hence, $X_\ep^M$ is a sum of the $M$-th first derivatives of the quasi-reversibility solution with respect to $\ep$. This connection
%will make the convergence results easy to obtain. 
From now on, we suppose $y \neq 0$: if not, we have $X_\ep^M = 0$ for all $\ep$
and $M$.

\subsection{Some estimates on $X_\ep^M$ and $A X_\ep^M$}

We start with estimates on the $M$-th iterated quasi-reversibility solution, valid for any data $y$, admissible or not. In other words,
these estimates are valid whether or not the data completion problem has a solution. 

\begin{proposition} \label{prop_allestimXem}
For all $\ep > 0$, for all $M \in \mathbb{N}$, we have
\begin{itemize}
\item[\textnormal{(a)}] $\displaystyle \Vert X_\ep^{M-1} \Vert_b < \Vert X_\ep^{M} \Vert_b $
\item[\textnormal{(b)}] $\displaystyle \|A X_\ep^M \|_\mathcal{Y} < \| y\|_\mathcal{Y}$
\item[\textnormal{(c)}]$\displaystyle \|A X_\ep^M-y  \|_\mathcal{Y} < \| y\|_\mathcal{Y} $
\item[\textnormal{(d)}] $ \displaystyle \|A X_\ep^{M} - y \|_\mathcal{Y} < \|AX_\ep^{M-1} -y \|_\mathcal{Y}$.
\end{itemize}
\end{proposition}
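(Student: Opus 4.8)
The plan is to prove the four estimates in the order (a), (d), (c), (b), because (d) relies on (a), (c) chains off (d), and (b) again uses (a). The single recurring device is to insert well-chosen test functions into the defining relation, which I first rewrite in residual form:
\[
(A X_\ep^M - y,\, Ax)_\mathcal{Y} + \ep\, b(X_\ep^M - X_\ep^{M-1},\, x) = 0, \qquad \forall x \in \mathcal{X}. \tag{$\star$}
\]
For (a) I would bypass the recurrence and instead use the expansion $X_\ep^M = \sum_{m=0}^M (-1)^m \frac{\ep^m}{m!}\, x_\ep^{(m)}$ established just above. Expanding $\|X_\ep^M\|_b^2$ bilinearly produces a double sum whose general term is $\frac{\ep^{m+n}}{m!\,n!}(-1)^{m+n} b(x_\ep^{(m)}, x_\ep^{(n)})$, and each factor $(-1)^{m+n} b(x_\ep^{(m)}, x_\ep^{(n)})$ is strictly positive by Lemma \ref{lemma_sign_b} (this is where $y \neq 0$ enters). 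Passing from $M-1$ to $M$ only appends terms with $\max(m,n)=M$, all strictly positive, so $\|X_\ep^{M-1}\|_b^2 < \|X_\ep^M\|_b^2$; the base case is the single term $\|x_\ep\|_b^2 = b(x_\ep,x_\ep) > 0$.

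A by-product I would record immediately is that the increment equals the newest term, $X_\ep^M - X_\ep^{M-1} = (-1)^M \frac{\ep^M}{M!} x_\ep^{(M)}$, so $\|X_\ep^M - X_\ep^{M-1}\|_b > 0$ for every $M$; and that the residual never vanishes, since $A X_\ep^M - y = 0$ would turn $(\star)$ into $b(X_\ep^M - X_\ep^{M-1}, \cdot)\equiv 0$, contradicting the previous line. With these two non-degeneracies in hand, (d) follows by testing $(\star)$ with $x = X_\ep^M - X_\ep^{M-1}$ and using $A(X_\ep^M - X_\ep^{M-1}) = (AX_\ep^M - y) - (AX_\ep^{M-1}-y)$, which yields
\[
\|A X_\ep^M - y\|_\mathcal{Y}^2 + \ep\,\|X_\ep^M - X_\ep^{M-1}\|_b^2 = (A X_\ep^M - y,\, A X_\ep^{M-1} - y)_\mathcal{Y}.
\]
Cauchy--Schwarz on the right plus strict positivity of the penalty term gives $\|AX_\ep^M - y\|_\mathcal{Y}^2 < \|AX_\ep^M - y\|_\mathcal{Y}\,\|AX_\ep^{M-1}-y\|_\mathcal{Y}$, and dividing by the nonzero $\|AX_\ep^M - y\|_\mathcal{Y}$ yields (d).

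Then (c) follows by chaining (d) down to $M=0$ together with a direct bound at $M=0$: testing the defining relation with $x = x_\ep$ gives $\|Ax_\ep\|_\mathcal{Y}^2 + \ep\|x_\ep\|_b^2 = (y, Ax_\ep)_\mathcal{Y}$, whence $\|A x_\ep - y\|_\mathcal{Y}^2 = \|y\|_\mathcal{Y}^2 - \|Ax_\ep\|_\mathcal{Y}^2 - 2\ep\|x_\ep\|_b^2 < \|y\|_\mathcal{Y}^2$. For (b) I would test the original (un-rewritten) relation with $x = X_\ep^M$, obtaining
\[
\|A X_\ep^M\|_\mathcal{Y}^2 + \ep\,\|X_\ep^M\|_b^2 = (y, A X_\ep^M)_\mathcal{Y} + \ep\, b(X_\ep^{M-1}, X_\ep^M).
\]
Bounding $b(X_\ep^{M-1}, X_\ep^M) \le \|X_\ep^{M-1}\|_b\|X_\ep^M\|_b < \|X_\ep^M\|_b^2$ via Cauchy--Schwarz and (a), then cancelling $\ep\|X_\ep^M\|_b^2$, leaves the strict bound $\|AX_\ep^M\|_\mathcal{Y}^2 < (y, AX_\ep^M)_\mathcal{Y}$; this forces $AX_\ep^M \neq 0$, and a final Cauchy--Schwarz on the right gives $\|AX_\ep^M\|_\mathcal{Y} < \|y\|_\mathcal{Y}$.

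I expect the genuinely delicate part to be not the test-function algebra, which is routine, but securing the three strict non-degeneracies on which every strict inequality rests: $\|x_\ep\|_b > 0$, $\|X_\ep^M - X_\ep^{M-1}\|_b > 0$, and $A X_\ep^M \neq y$. All three trace back to the hypothesis $y \neq 0$ and to Lemma \ref{lemma_sign_b}, so the crux of the argument is recognizing that the derivative-expansion identity converts these sign questions into a transparent sum of strictly positive terms.
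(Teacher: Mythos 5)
Your proof is correct. Parts (a) and (b) are essentially the paper's own argument: the same bilinear expansion of $\|X_\ep^M\|_b^2$ along the series $\sum_{m=0}^M(-1)^m\frac{\ep^m}{m!}x_\ep^{(m)}$ with Lemma \ref{lemma_sign_b} supplying the strict positivity of every appended term, and the same test function $X_\ep^M$ plus Cauchy--Schwarz and (a) for (b). Where you genuinely diverge is (d), and hence (c). The paper obtains (c) directly from the strict inequality $\|AX_\ep^{M+1}\|_\mathcal{Y}^2<(y,AX_\ep^{M+1})_\mathcal{Y}$ already established for (b), and proves (d) by expanding $\|AX_\ep^{M+1}-y\|_\mathcal{Y}^2$ along the derivative series and telescoping the cross term to $(Ax_\ep^{(M+1)},AX_\ep^M-y)_\mathcal{Y}=(-1)^{M+1}\frac{\ep^{M+1}}{M!}\,b(x_\ep^{(M+1)},x_\ep^{(M)})$, whose sign is again Lemma \ref{lemma_sign_b}. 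You instead test the recurrence in residual form with the increment $X_\ep^M-X_\ep^{M-1}$, producing the exact energy identity $\|AX_\ep^M-y\|_\mathcal{Y}^2+\ep\,\|X_\ep^M-X_\ep^{M-1}\|_b^2=(AX_\ep^M-y,AX_\ep^{M-1}-y)_\mathcal{Y}$ and concluding by Cauchy--Schwarz; this is the classical monotonicity argument for iterated Tikhonov regularization. Your non-degeneracy bookkeeping is sound and draws on the same inputs as the paper: $\|X_\ep^M-X_\ep^{M-1}\|_b=\frac{\ep^M}{M!}\|x_\ep^{(M)}\|_b>0$ and $AX_\ep^M\neq y$ both reduce to the diagonal case of Lemma \ref{lemma_sign_b}, and your base-case computation $\|Ax_\ep-y\|_\mathcal{Y}^2=\|y\|_\mathcal{Y}^2-\|Ax_\ep\|_\mathcal{Y}^2-2\ep\|x_\ep\|_b^2$ is correct, so chaining (d) down to $M=0$ validly replaces the paper's one-line derivation of (c). As to what each route buys: the paper's computation stays inside its derivative-expansion machinery and exposes the sign structure of the cross terms, while yours is more elementary, needs no series at all for (d), and is in fact tighter --- the paper's displayed expansion in the proof of (d) omits the quadratic term $\frac{\ep^{2(M+1)}}{((M+1)!)^2}\|Ax_\ep^{(M+1)}\|_\mathcal{Y}^2$, a slip that must be repaired (for instance via the defining equation of $x_\ep^{(M+1)}$, which yields $\|Ax_\ep^{(M+1)}\|_\mathcal{Y}^2\leq -(M+1)\,b(x_\ep^{(M)},x_\ep^{(M+1)})$, so the combined increment is still strictly negative), whereas your identity is exact and your version of (d) stands without any repair.
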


\begin{proof}
We start with estimate (a): as $y \neq 0$, we have $0 = \Vert X_\ep^{-1} \Vert_b < \Vert x_\ep \Vert_b = \Vert X_\ep^1\Vert_b $.
Furthermore, for $M \in \mathbb{N}$, $\displaystyle \|X_\ep^M\|_b^2 = \sum_{k=0}^M \sum_{m=0}^M
 (-1)^{k+m} \frac{\ep^{k+m}}{k!\, m!} b(x_\ep^{(k)},x_\ep^{(m)})$. Therefore, from lemma \ref{lemma_sign_b} we obtain
$$
 \|X_\ep^{M+1}\|_b^2 - \|X_\ep^M\|_b^2  = 2 \sum_{m=0}^{M+1} (-1)^{M+1+m} \frac{\ep^{M+1+m}}{(M+1)!\, m!}
 b(x_\ep^{(M+1)},x_\ep^m) >0.
$$
Regarding estimates (b) and (c), we note that they hold for $M = 0$. Furthermore,
$$
\|AX_\ep^{M+1}\|_\mathcal{Y}^2   = (y,A X_\ep^{M+1})_\mathcal{Y} + \ep b(X_\ep^M,X_\ep^{M+1}) - \ep  \|X_\ep^{M+1} \|_b^2.
$$
Estimate (a) implies $b(X_\ep^{M+1},X_\ep^M) < \|X_\ep^{M+1} \|_b^2$, and
$$
\|AX_\ep^{M+1}\|_\mathcal{Y}^2   < (y,A X_\ep^{M+1})_\mathcal{Y}.
$$
Cauchy-Schwarz inequality implies then the estimate (b). Furthermore,
$$
\|A X_\ep^{M+1} - y \|_\mathcal{Y}^2 = (AX_\ep^{M+1} - y ,A X_\ep^{M+1})_\mathcal{Y} - (AX_\ep^{M+1} - y ,y)_\mathcal{Y} <
 - (AX_\ep^{M+1} - y ,y)_\mathcal{Y}
$$ 
which leads to  estimate (c). 

Finally, the case $M = 0$ of estimate (d) correspond to estimate (c) with same $M$.
For $M\in \mathbb{N}$, we note that
\begin{align*}
\|A X_\ep^{M+1}-y \|_\mathcal{Y}^2  & = \|\sum_{m=0}^{M+1} (-1)^m \frac{\ep^m}{m!} A x_\ep^{(m)} - y \|_\mathcal{Y}^2 \\
 & = \|A X_\ep^{M}-y \|_\mathcal{Y}^2 +  (-1)^{M+1} \frac{2\,\ep^{M+1}}{(M+1)!} (A x_\ep^{(M+1)},AX_\ep^M - y)_\mathcal{Y}.
\end{align*}
Therefore, it is sufficient to determine the sign of $(-1)^{M+1}(A x_\ep^{(M+1)},AX_\ep^M - y)_\mathcal{Y}$. By definition, we have
$$
(Ax_\ep^{(M+1)},Ax_\ep - y)_\mathcal{Y} = -\ep\ b(x_\ep^{(M+1)},x_\ep)
$$
and for all $m \in \left\lbrace 1,\ldots, M \right\rbrace$, 
\begin{align*}
\Big( A x_\ep^{(M+1)}, A \left( (-1)^m \frac{\ep^m}{m!}  x_\ep^{(m)}\right) \Big)_\mathcal{Y}
 & =  (-1)^{m+1} \frac{\ep^{m+1}}{m!}\ b( x_\ep^{(M+1)} ,  x_\ep^{(m)}  )  \\
& \quad \quad   +  (-1)^{m+1} \frac{\ep^m}{(m-1)!}\ b (x_\ep^{(M+1)}, x_\ep^{(m-1)} ).
\end{align*}
Summing these equations for $m=0$ to $M$, we obtain 
$$
(Ax_\ep^{(M+1)},AX_\ep^M -y)_\mathcal{Y}   =  (-1)^{M+1} \frac{\ep^{M+1}}{M!} \ b(x_\ep^{(M+1)},x_\ep^{(M)}).
$$
In conclusion, $(-1)^{M+1} (Ax_\ep^{(M+1)},AX_\ep^M -y)_\mathcal{Y} =  \frac{\ep^{M+1}}{M!} b(x_\ep^{(M+1)},x_\ep^{(M)}) <0$ by
lemma \ref{lemma_sign_b}.
The result follows.
\end{proof}

\begin{proposition} \label{prop_bound_XepM}
For all $\ep>0$, for all $M\in \left\lbrace - 1 \right\rbrace \cup\mathbb{N}$, $\|X_\ep^M \|_b \leq \sqrt{\frac{2(M+1)}{\ep}} \|y\|_\mathcal{Y}$.
\end{proposition}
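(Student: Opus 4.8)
The plan is to test the defining equation for $X_\ep^M$ against $x = X_\ep^M$ itself. Since $X_\ep^M$ satisfies
$$
(A X_\ep^M, A x)_\mathcal{Y} + \ep\, b(X_\ep^M,x) = (y,Ax)_\mathcal{Y} + \ep\, b(X_\ep^{M-1},x), \quad \forall x \in \mathcal{X},
$$
this choice immediately yields the energy identity
$$
\|A X_\ep^M\|_\mathcal{Y}^2 + \ep \|X_\ep^M\|_b^2 = (y, A X_\ep^M)_\mathcal{Y} + \ep\, b(X_\ep^{M-1}, X_\ep^M).
$$
I would then control the two terms on the right. For the first, Cauchy--Schwarz together with estimate (b) of Proposition \ref{prop_allestimXem} gives $(y, A X_\ep^M)_\mathcal{Y} \le \|y\|_\mathcal{Y}\,\|A X_\ep^M\|_\mathcal{Y} \le \|y\|_\mathcal{Y}^2$. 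For the second, Cauchy--Schwarz gives $b(X_\ep^{M-1}, X_\ep^M) \le \|X_\ep^{M-1}\|_b\,\|X_\ep^M\|_b$. Dropping the nonnegative term $\|A X_\ep^M\|_\mathcal{Y}^2$ and abbreviating $a_M := \|X_\ep^M\|_b$, I obtain the recursive inequality
$$
\ep\, a_M\,(a_M - a_{M-1}) \le \|y\|_\mathcal{Y}^2.
$$

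The key observation is that estimate (a) of Proposition \ref{prop_allestimXem} makes $(a_M)$ a nondecreasing sequence of nonnegative reals with $a_{-1} = 0$. This monotonicity lets me convert the recursion into a telescoping inequality via the elementary bound $a_M(a_M - a_{M-1}) \ge \tfrac12 (a_M^2 - a_{M-1}^2)$, which holds precisely because $a_M \ge a_{M-1} \ge 0$ forces $a_M \ge \tfrac12(a_M + a_{M-1})$. Hence $\tfrac{\ep}{2}(a_M^2 - a_{M-1}^2) \le \|y\|_\mathcal{Y}^2$, i.e.\ $a_M^2 - a_{M-1}^2 \le \tfrac{2}{\ep}\|y\|_\mathcal{Y}^2$, for every $M$.

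Summing this telescoping estimate from $0$ to $M$ and using $a_{-1} = 0$ collapses the left-hand side to $a_M^2$ and produces $M+1$ copies of the right-hand side, giving $a_M^2 \le \tfrac{2(M+1)}{\ep}\|y\|_\mathcal{Y}^2$, which is exactly the claimed bound after taking square roots; the degenerate case $M = -1$ holds trivially since $X_\ep^{-1} = 0_\mathcal{X}$. The only genuinely delicate point is the passage from the recursive inequality to a telescoping one: a naive summation of $\ep\, a_M(a_M - a_{M-1}) \le \|y\|_\mathcal{Y}^2$ does not telescope, and it is the monotonicity supplied by estimate (a)---itself a consequence of the sign Lemma \ref{lemma_sign_b}---that rescues the argument. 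Everything else is a routine application of Cauchy--Schwarz and the estimates already established in Proposition \ref{prop_allestimXem}.
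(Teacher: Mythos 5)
Your proof is correct, and it takes a genuinely different, more streamlined route than the paper's. Both arguments start identically: testing the defining variational equation with $x = X_\ep^M$, bounding $(y, A X_\ep^M)_\mathcal{Y} \le \|y\|_\mathcal{Y}^2$ via estimate (b) of Proposition \ref{prop_allestimXem}, applying Cauchy--Schwarz to the $b$-term, and discarding $\|A X_\ep^M\|_\mathcal{Y}^2$. But from there the paper runs a comparison argument against an auxiliary sequence defined by the quadratic recursion $x_{M+1}^2 - x_{M+1}x_M - x_1^2 = 0$ with $x_1 = \|y\|_\mathcal{Y}/\sqrt{\ep}$, requiring two separate inductions: one showing $x_M < \sqrt{2M}\,x_1$ by evaluating the polynomial $p_M$ at $\sqrt{2(M+1)}\,x_1$, and a second showing $\|X_\ep^M\|_b \le x_{M+1}$. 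Your telescoping trick collapses both inductions into one line. One inaccuracy in your write-up, which in fact works in your favor: the elementary bound $a_M(a_M - a_{M-1}) \ge \tfrac12(a_M^2 - a_{M-1}^2)$ does \emph{not} require the monotonicity supplied by estimate (a), since it is equivalent to the identity $a_M(a_M - a_{M-1}) = \tfrac12(a_M^2 - a_{M-1}^2) + \tfrac12(a_M - a_{M-1})^2$, valid for all real $a_M, a_{M-1}$. So the step you single out as ``the only genuinely delicate point'' is actually unconditional, and your proof needs only estimate (b) (which, to be fair, is itself proved in the paper using estimate (a) and Lemma \ref{lemma_sign_b}, so the upstream dependence persists). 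As for what each approach buys: the paper's comparison sequence directly yields the strict inequality recorded in the remark following the proposition, whereas your version as stated gives only the non-strict bound --- though strictness is easily recovered, since estimate (b) gives $\|A X_\ep^M\|_\mathcal{Y} < \|y\|_\mathcal{Y}$ strictly when $y \neq 0$; in exchange, your argument is shorter, avoids the auxiliary sequence entirely, and makes the $\sqrt{M+1}$ growth transparent as a sum of $M+1$ equal increments.
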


\begin{proof} Proposition \ref{prop_bound_XepM} is obviously true for $M = -1$.  Let $M \in \mathbb{N}$. We consider first the  inductive sequence: 
$$
\left\lbrace
\begin{array}{l}
x_1 > 0 \\
\forall M \in \mathbb{N},\ x_{M+1} > 0 \text{ and } x_{M+1}^2 - x_{M+1}x_M - x_1^2 = 0. 
\end{array}
\right.
$$
Note that the sequence is well defined as $p_M(x) := x^2 - x_M\, x -x_1^2$ verifies $p_M(0) <0$ and therefore
has a unique strictly positive root. 

We prove by induction that $x_M < \sqrt{2\,M} x_1$. 
It obviously holds for $M=1$. Suppose that $x_M < \sqrt{2\,M}x_1$ for some $M\in \mathbb{N}$. Then
\begin{align*}
p_M\left(\sqrt{2(M+1)}x_1\right) & = 2(M+1)x_1^2 - \sqrt{2(M+1)}x_1 x_M - x_1^2 \\
& > (2M+1)x_1^2 - 2 \sqrt{M+1}\sqrt{M}x_1^2  \\
 & = \left( \sqrt{M+1} - \sqrt{M} \right)^2 x_1^2 > 0
\end{align*}
and therefore $x_{M+1} < \sqrt{2(M+1)}x_1$.

Now, we specify the sequence, defining $x_1 := \displaystyle\frac{1}{\sqrt{\ep}} \|y\|_\mathcal{Y}$, and prove by induction that
$\|X_\ep^M\|_b \leq x_{M+1}$. Suppose it holds for some $M \in \mathbb{N}$. Note that by definition of $X_\ep^{M+1}$, we have
\begin{align*}
\|A X_\ep^{M+1} \|_\mathcal{Y}^2 + \ep \|X_\ep^{M+1}\|_b^2 & = (y,AX_\ep^{M+1})_\mathcal{Y} + \ep b(X_\ep^M,X_\ep^{M+1})\\
 & \leq \|y\|_\mathcal{Y} \|AX_\ep^{M+1}\|_\mathcal{Y} + \ep \|X_\ep^M\|_b \|X_\ep^{M+1}\|_b \\
 & \leq \|y\|_\mathcal{Y}^2 + \ep\, x_{M+1} \|X_\ep^{M+1}\|_b,   
\end{align*}
(we used estimate (b) of proposition \ref{prop_allestimXem} here) which in particular implies
$$
\|X_\ep^{M+1}\|_b^2 - x_{M+1} \|X_\ep^{M+1}\|_b - x_1^2 <0.
$$
The definition of the sequence $(x_M)_{M\in\mathbb{N}}$ implies directly $\|X_\ep^{M+1}\|_b \leq x_{M+2}$. As the result is true for $M =0$, 
the proposition follows.
\end{proof}

\begin{remark}
Actually, for $M\in \mathbb{N}$, the inequality in proposition \ref{prop_bound_XepM} is strict.
\end{remark}

\subsection{The case of exact data}

From now on, we suppose that $y \in Y_{adm}$, and denote $x_s$ the solution of the abstract data completion problem. 
We define $R_\ep^M := X_\ep^M - x_s$, the discrepancy between the $M$-th iterated QR solution, and the exact solution.
Note that by definition, $R_\ep^{-1} = -x_s$ and for all $M \in \mathbb{N}$,
\begin{equation} \label{eqn_RN}
(AR_\ep^M, Ax)_\mathcal{Y} + \ep\, b (R_\ep^M,x)= \ep\, b(R_\ep^{M-1},x),\quad \forall x \in \mathcal{X}.
\end{equation}
We aim to prove the following theorem:

\begin{theorem}  \label{thm_proof_conv_RepM}
For all  $\ep > 0$, $\displaystyle R_\ep^M \xrightarrow[\mathcal{X}]{M \rightarrow \infty} 0$. In other words, for any  $\ep >0$, $X_\ep^M$ converges to $x_s$ as $M$ goes to infinity.
\end{theorem}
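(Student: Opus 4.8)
The plan is to extract from the recursion \eqref{eqn_RN} two elementary monotonicity facts and then to promote them to strong convergence by exploiting the hidden symmetry of the iteration. First I would take $x=R_\ep^M$ in \eqref{eqn_RN}, which gives
\[
\Vert A R_\ep^M\Vert_\mathcal{Y}^2 + \ep\,\Vert R_\ep^M\Vert_b^2 = \ep\, b(R_\ep^{M-1},R_\ep^M) \le \ep\,\Vert R_\ep^{M-1}\Vert_b\,\Vert R_\ep^M\Vert_b
\]
by Cauchy--Schwarz for $\Vert\cdot\Vert_b$. The right-hand inequality forces $\Vert R_\ep^M\Vert_b\le\Vert R_\ep^{M-1}\Vert_b$, so $(\Vert R_\ep^M\Vert_b)_M$ is nonincreasing and converges to some $\ell\ge 0$; reinserting this, the same identity yields $\Vert AR_\ep^M\Vert_\mathcal{Y}^2\le \ep\,\Vert R_\ep^M\Vert_b(\Vert R_\ep^{M-1}\Vert_b-\Vert R_\ep^M\Vert_b)\to 0$, hence $AR_\ep^M\to 0$ strongly in $\mathcal{Y}$. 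Since $\Vert R_\ep^M\Vert_{A,b}^2=\Vert AR_\ep^M\Vert_\mathcal{Y}^2+\Vert R_\ep^M\Vert_b^2$ then stays bounded, $(R_\ep^M)_M$ is bounded in $\mathcal{X}$.

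Next I would establish weak convergence $R_\ep^M\rightharpoonup 0$. Subtracting two consecutive instances of \eqref{eqn_RN} shows that $\Delta^M:=R_\ep^M-R_\ep^{M-1}$ solves the same relation with right-hand side $\ep\, b(\Delta^{M-1},\cdot)$; since $\Vert\Delta^M\Vert_b^2=\Vert R_\ep^{M-1}\Vert_b^2-\Vert R_\ep^M\Vert_b^2-\tfrac{2}{\ep}\Vert AR_\ep^M\Vert_\mathcal{Y}^2\to 0$, testing that relation against $\Delta^M$ also forces $\Vert A\Delta^M\Vert_\mathcal{Y}\to 0$, whence $\Delta^M\to 0$ strongly in $\mathcal{X}$. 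Then for any weak cluster point $R_\ep^{M_k}\rightharpoonup z$ we also have $R_\ep^{M_k-1}\rightharpoonup z$, and letting $k\to\infty$ in \eqref{eqn_RN} (weak continuity of $A$ and of $b(\cdot,x)$) gives $(Az,Ax)_\mathcal{Y}=0$ for every $x$, i.e. $Az=0$, so $z=0$ because $A$ is one-to-one. Being bounded with all weak cluster points zero, the whole sequence satisfies $R_\ep^M\rightharpoonup 0$.

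The decisive step is to upgrade this to $\ell=0$, and this is the main obstacle: monotonicity and weak convergence do not suffice on their own, because a bilinear pairing of two weakly null sequences need not vanish (indeed $b(R_\ep^{M-1},R_\ep^M)\to\ell^2$). The remedy is the symmetry of the iteration. Writing $a_\ep(x,\tilde x):=(Ax,A\tilde x)_\mathcal{Y}+\ep\, b(x,\tilde x)$, equation \eqref{eqn_RN} reads $a_\ep(R_\ep^M,x)=\ep\, b(R_\ep^{M-1},x)$, and using that both $a_\ep$ and $b$ are symmetric I obtain the shift identity $a_\ep(R_\ep^M,R_\ep^N)=a_\ep(R_\ep^{N+1},R_\ep^{M-1})$ for $M\ge 0$, $N\ge -1$. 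Iterating it from $N=M$ down to $N=-1$ preserves the index sum and gives
\[
\Vert AR_\ep^M\Vert_\mathcal{Y}^2+\ep\,\Vert R_\ep^M\Vert_b^2 = a_\ep(R_\ep^M,R_\ep^M)=a_\ep(R_\ep^{2M+1},R_\ep^{-1})=-a_\ep(R_\ep^{2M+1},x_s),
\]
since $R_\ep^{-1}=-x_s$. Finally $-a_\ep(R_\ep^{2M+1},x_s)=-(AR_\ep^{2M+1},y)_\mathcal{Y}-\ep\, b(R_\ep^{2M+1},x_s)\to 0$, because $AR_\ep^{2M+1}\to 0$ strongly and $R_\ep^{2M+1}\rightharpoonup 0$ makes $b(R_\ep^{2M+1},x_s)\to 0$. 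Hence $\ep\ell^2=0$, so $\ell=0$, and therefore $\Vert R_\ep^M\Vert_{A,b}^2=\Vert AR_\ep^M\Vert_\mathcal{Y}^2+\Vert R_\ep^M\Vert_b^2\to 0$, which is the claim by equivalence of $\Vert\cdot\Vert_{A,b}$ and $\Vert\cdot\Vert_\mathcal{X}$.

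I would flag the shift identity as the crux of the argument. It is the variational incarnation of the fact that $R_\ep^M$ is the $(M+1)$-st power of a fixed operator, self-adjoint for the $a_\ep$ inner product, applied to $-x_s$, whose only fixed point is $0$ precisely because $A$ is injective. Rewriting the surviving energy $\ep\ell^2$ as a pairing against the fixed datum $x_s$ is exactly what allows the weak convergence obtained in the second step to finish the proof.
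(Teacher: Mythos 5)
Your proof is correct, and it follows a genuinely different route from the paper's. The common starting point is testing \eqref{eqn_RN} with $R_\ep^M$ to get monotonicity of $\Vert R_\ep^M\Vert_b$, but after that the arguments diverge. The paper proves summability of $\sum_m \Vert AR_\ep^m\Vert_\mathcal{Y}^2$ by a chained induction (Proposition \ref{prop_conv_ARM}), and then, to pass from weak to strong convergence, invokes the uniform bound $\Vert X_\ep^M\Vert_b \leq \Vert x_s\Vert_b$ of Proposition \ref{prop_XepM_bnd}, whose proof rests on the whole differentiability-in-$\ep$ apparatus ($\frac{d}{d\ep}X_\ep^M = (-1)^M\frac{\ep^M}{M!}x_\ep^{(M+1)}$ together with the sign Lemma \ref{lemma_sign_b}); combined with $\Vert AX_\ep^M\Vert_\mathcal{Y} < \Vert y\Vert_\mathcal{Y}$ this yields $\Vert X_\ep^M\Vert_{A,b} \leq \Vert x_s\Vert_{A,b}$, and weak convergence plus this norm bound gives strong convergence by the standard Hilbert-space argument. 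You avoid the derivative machinery entirely: your telescoping estimate $\Vert AR_\ep^M\Vert_\mathcal{Y}^2 \leq \ep\Vert R_\ep^M\Vert_b\left(\Vert R_\ep^{M-1}\Vert_b - \Vert R_\ep^M\Vert_b\right)$ gives $AR_\ep^M\to 0$ directly (and, summed over $M$, it even recovers the paper's bound $\sum_m \Vert AR_\ep^m\Vert_\mathcal{Y}^2 \leq \ep\Vert x_s\Vert_b^2$, though you do not need this), while the shift identity replaces the uniform bound: it is, as you say, the variational expression of $R_\ep^M = T^{M+1}(-x_s)$ for the $a_\ep$-self-adjoint operator $T$ defined by $a_\ep(Tu,\cdot)=\ep\,b(u,\cdot)$, and it converts the persistent energy $a_\ep(R_\ep^M,R_\ep^M)$ into $-a_\ep(R_\ep^{2M+1},x_s)$, a pairing of a weakly null sequence against a fixed element. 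You correctly diagnose why an extra idea is needed there: indeed $b(R_\ep^{M-1},R_\ep^M)\to\ell^2$, so monotonicity plus weak convergence cannot close the argument on their own. Your route is essentially the spectral proof of iterated Tikhonov realized variationally, and it is leaner and more self-contained for this theorem in isolation; what the paper's heavier path buys is the by-products consumed elsewhere, notably the strict decrease of the residuals $\Vert AR_\ep^m\Vert_\mathcal{Y}$ (Proposition \ref{prop_allestimXem}, via Lemma \ref{lemma_sign_b}), which together with summability gives $m\Vert AR_\ep^m\Vert_\mathcal{Y}^2\to 0$ in the Morozov analysis of section \ref{section_iterated_noisy_data} — your argument yields no such monotonicity. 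Two cosmetic points: the shift identity as literally written, $a_\ep(R_\ep^M,R_\ep^N)=a_\ep(R_\ep^{N+1},R_\ep^{M-1})$, is an involution, so the iteration really runs on its symmetrized form $a_\ep(R_\ep^M,R_\ep^N)=a_\ep(R_\ep^{M-1},R_\ep^{N+1})$ (harmless, since you invoke the symmetry of $a_\ep$); and the step $\Delta^M\to 0$ is dispensable, since $AR_\ep^{M_k}\to 0$ strongly together with weak continuity of $A$ already forces $Az=0$, hence $z=0$ by injectivity, for every weak cluster point $z$.
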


As $X_\ep^M = \sum_{m=0}^M (-1)^m \frac{\ep^m}{m\,!}x_\ep^{(m)}$, it means that the sum converges as $M$ goes to infinity. 
In other words, it means that if it exists $x_s \in \mathcal{X}$ solution of $A x_s = y$, then
$$
x_s = \sum_{m=0}^\infty  (-1)^m \frac{\ep^m}{m\,!}x_\ep^{(m)},
$$
hence the solution of the data completion problem can be seen as a series of derivatives of the quasi-reversibility solution w.r.t. the parameter
$\ep$.

Let $\ep> 0$ be fixed. We start with the following estimates

\begin{proposition} \label{prop_XepM_bnd}
For all $\ep > 0$, for all $M\in \left\lbrace-1 \right\rbrace \cup \mathbb{N}$, 
\begin{itemize}
\item[-] $\|R_\ep^{M+1} \|_b < \|R_\ep^M\|_b$
\item[-] $\Vert X_\ep^M \Vert_b \leq \Vert x_s \Vert_b$.
\end{itemize}
As a consequence, we have $\Vert R_\ep^M \Vert_b \leq \Vert x_s \Vert_b$ for all $M \in \left\lbrace -1 \right\rbrace \cup\mathbb{N}$ and all $\ep > 0$.
\end{proposition}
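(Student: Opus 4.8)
The plan is to prove the two bullets in the stated order, to read off the ``consequence'' from the first, and to isolate the genuinely delicate point (the bound on $\|X_\ep^M\|_b$) at the end. The engine for everything is the identity obtained by testing \eqref{eqn_RN}, written for the index $M+1$, against $x=R_\ep^{M+1}$:
$$
\|A R_\ep^{M+1}\|_\mathcal{Y}^2 + \ep \|R_\ep^{M+1}\|_b^2 = \ep\, b(R_\ep^M, R_\ep^{M+1}). \qquad (\ast)
$$

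For the first bullet I would combine $(\ast)$ with the Cauchy--Schwarz inequality for the seminorm, $b(R_\ep^M,R_\ep^{M+1})\le \|R_\ep^M\|_b\|R_\ep^{M+1}\|_b$, to get $\|R_\ep^{M+1}\|_b\le\|R_\ep^M\|_b$; the strictness comes from the term $\|A R_\ep^{M+1}\|_\mathcal{Y}^2$, which is strictly positive as soon as $R_\ep^{M+1}\neq 0$ because $A$ is one-to-one. To turn this into a genuinely strict inequality I first establish that $\|R_\ep^M\|_b>0$ for every $M$. The base case $\|R_\ep^{-1}\|_b=\|x_s\|_b>0$ uses the fact (noted before Corollary~\ref{coro_1}) that $x_\ep\neq x_s$ when $y\neq0$: if $\|x_s\|_b=0$ then $b(x_s,\cdot)=0$, and testing the quasi-reversibility equation against $x_\ep-x_s$ would force $x_\ep=x_s$, a contradiction. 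The inductive step is the contrapositive statement ``$\|R_\ep^{M+1}\|_b=0\Rightarrow\|R_\ep^M\|_b=0$'', which falls out of $(\ast)$ together with the injectivity of $A$. The ``consequence'' $\|R_\ep^M\|_b\le\|x_s\|_b$ is then immediate by telescoping the strict decrease down to $\|R_\ep^{-1}\|_b=\|x_s\|_b$.

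The hard part is the second bullet, $\|X_\ep^M\|_b\le\|x_s\|_b$, precisely because $\|X_\ep^M\|_b$ is \emph{increasing} in $M$ (Proposition~\ref{prop_allestimXem}(a)) and the bound is exactly its supremum, so no triangle-inequality estimate will suffice. I would reduce it to the sign statement $b(X_\ep^M,R_\ep^M)\le 0$: granting this, $\|X_\ep^M\|_b^2\le b(X_\ep^M,x_s)\le\|X_\ep^M\|_b\|x_s\|_b$ by Cauchy--Schwarz, whence $\|X_\ep^M\|_b\le\|x_s\|_b$ (the case $\|X_\ep^M\|_b=0$ being trivial). Writing $X_\ep^M=x_s+R_\ep^M=-R_\ep^{-1}+R_\ep^M$, this sign statement is exactly $b(R_\ep^M,R_\ep^M)\le b(R_\ep^{-1},R_\ep^M)$, i.e. a monotonicity statement for the scalars $b(R_\ep^m,R_\ep^n)$.

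The key structural observation, and the step I expect to carry the whole argument, is that $b(R_\ep^m,R_\ep^n)$ depends only on $m+n$. To see it I would write \eqref{eqn_RN} for the index $m+1$ tested against $R_\ep^{n+1}$, and for the index $n+1$ tested against $R_\ep^{m+1}$, and subtract: the quadratic terms $(AR_\ep^{m+1},AR_\ep^{n+1})_\mathcal{Y}$ and $\ep\,b(R_\ep^{m+1},R_\ep^{n+1})$ cancel by symmetry, leaving $b(R_\ep^{m+1},R_\ep^n)=b(R_\ep^m,R_\ep^{n+1})$. Hence $q_k:=b(R_\ep^m,R_\ep^n)$ (any $m+n=k$, all indices $\ge-1$) is well defined, and it remains to prove $q_k$ is nonincreasing. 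The odd-to-even gaps $q_{2j+1}-q_{2j+2}=\tfrac1\ep\|AR_\ep^{j+1}\|_\mathcal{Y}^2\ge0$ come straight from $(\ast)$; for the even-to-odd gaps I would sandwich $q_{2j+1}=b(R_\ep^j,R_\ep^{j+1})\le q_{2j}^{1/2}q_{2j+2}^{1/2}$ (Cauchy--Schwarz) against the already-proved $q_{2j+2}\le q_{2j+1}$ to deduce $q_{2j+1}\le q_{2j}$. Since $2M\ge M-1$, monotonicity gives $b(R_\ep^M,R_\ep^M)=q_{2M}\le q_{M-1}=b(R_\ep^{-1},R_\ep^M)$, i.e. $b(X_\ep^M,R_\ep^M)\le 0$, which closes the proof. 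The main things to watch are keeping every index $\ge-1$ so that \eqref{eqn_RN} is legitimately available at each substitution, and the slightly clever even-to-odd Cauchy--Schwarz sandwich, which is what supplies the monotonicity that a direct induction on $b(X_\ep^M,R_\ep^M)$ fails to give.
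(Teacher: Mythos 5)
Your proposal is correct, but for the second bullet it takes a genuinely different route from the paper. For the first bullet you do exactly what the paper does --- test \eqref{eqn_RN} with $x=R_\ep^{M+1}$ and apply Cauchy--Schwarz for the seminorm --- except that you justify the strictness, which the paper asserts silently: your induction showing $\|R_\ep^M\|_b>0$ for every $M$ (base case $\|x_s\|_b>0$, inductive step by the contrapositive through the injectivity of $A$) is a genuine gain in rigor, though note that your base case is precisely equivalent to the unproved remark preceding Corollary~\ref{coro_1} that $x_\ep\neq x_s$ when $y\neq 0$, so on this point you and the paper ultimately lean on the same fact. The real divergence is the bound $\|X_\ep^M\|_b\le\|x_s\|_b$. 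The paper stays inside its differentiability framework: from $X_\ep^M=\sum_{m=0}^M(-1)^m\frac{\ep^m}{m!}x_\ep^{(m)}$ it computes the telescoping derivative $\frac{d}{d\ep}X_\ep^M=(-1)^M\frac{\ep^M}{M!}x_\ep^{(M+1)}$, invokes the sign Lemma~\ref{lemma_sign_b} to conclude that $\ep\mapsto\|X_\ep^M\|_b$ is decreasing, and then sends $\ep\to 0$, using $\|R_\ep^M\|_b\le\|x_\ep-x_s\|_b\to 0$ (a byproduct of the first bullet) to identify the limit as $\|x_s\|_b$. Your argument never differentiates in $\ep$ at all: the swap identity $b(R_\ep^{m+1},R_\ep^n)=b(R_\ep^m,R_\ep^{n+1})$, obtained by symmetrizing \eqref{eqn_RN}, makes $q_k:=b(R_\ep^m,R_\ep^n)$ for $m+n=k$ well defined, and your odd/even sandwich ($q_{2j+1}-q_{2j+2}=\ep^{-1}\|AR_\ep^{j+1}\|_\mathcal{Y}^2\ge 0$ from the energy identity, then Cauchy--Schwarz for the even-to-odd gap) shows $q_k$ is nonincreasing, whence $b(X_\ep^M,R_\ep^M)=q_{2M}-q_{M-1}\le 0$ and the bound follows. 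I checked the details --- the well-definedness of $q_k$ (all intermediate indices stay $\ge -1$, so \eqref{eqn_RN} is always available), the nonnegativity of the even terms needed in the sandwich, and the final reduction via $x_s=-R_\ep^{-1}$ --- and they are sound. What each approach buys: the paper's route is short given machinery it has already built and reuses elsewhere (the derivatives $x_\ep^{(m)}$ and Lemma~\ref{lemma_sign_b} also drive Proposition~\ref{prop_allestimXem}); yours is self-contained at fixed $\ep$, needing only the recursion, symmetry and Cauchy--Schwarz, and as a bonus the telescoping of your gaps $q_{2j+1}-q_{2j+2}$ yields $\sum_m\|AR_\ep^m\|_\mathcal{Y}^2\le\ep\|x_s\|_b^2$, i.e. it re-derives Proposition~\ref{prop_conv_ARM} essentially for free.
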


\begin{proof}
Choosing $x = R_\ep^{M}$ in (\ref{eqn_RN}), we obtain
$$
\|A R_\ep^{M} \|_\mathcal{Y}^2 + \ep \|R_\ep^{M} \|_b^2 = \ep b(R_\ep^{M-1},R_\ep^{M}) \Rightarrow
 \|R_\ep^{M} \|_b^2 < b(R_\ep^{M},R_\ep^{M-1}) \leq \|R_\ep^M \|_b \|R_\ep^{M-1}\|_b,
$$
hence the first estimate is valid. Note that in particular, 
 as $\Vert R_\ep^M \Vert_b \leq \Vert R_\ep^0\Vert_b
 = \Vert x_\ep - x_s \Vert_b$, we have $\Vert R_\ep^M\Vert_b \xrightarrow[]{\ep \rightarrow 0} 0$ for any $M \in \mathbb{N}$. 
 
 Let us now focus on the second estimate, which is directly true for $M=-1$.
 For $M\in \mathbb{N}$,
let us define $g := \ep \in \mathbb{R}^+_* \mapsto \frac{1}{2}\|X_\ep^M \|_b^2$. As 
by definition, $X_\ep^M = \sum_{m=0}^M (-1)^m \frac{\ep^m}{m!} x_\ep^{(m)}$, we have
$$
\frac{d}{d\ep} X_\ep^M = \sum_{m=0}^M (-1)^m \frac{\ep^m}{m!} x_\ep^{(m+1)} + \sum_{m=1}^M (-1)^m \frac{\ep^{m-1}}{(m-1)!} x_\ep^{(m)} = (-1)^M \frac{\ep^M}{M!} x_\ep^{(M+1)}.
$$
Therefore, we have
$$
g'(\ep) = b(\frac{d}{d\ep}X_\ep^M,X_\ep^M) = \sum_{m=0}^M (-1)^{M+m} \frac{\ep^{M+m}}{M!\,m!} b(x_\ep^{(M+1)},x_\ep^{(m)}) <0,
$$
implying in particular that $\|X_\ep^M\|_b \leq \displaystyle \lim_{\eta \rightarrow 0} \|X_\eta^M \|_b = \|x_s\|_b$, as 
$\Vert R_\ep^M\Vert_b \xrightarrow[]{\ep \rightarrow 0} 0$.
\end{proof}

\begin{proposition} \label{prop_conv_ARM}
The series $\displaystyle \sum_M \|A R_\ep^M \|_\mathcal{Y}^2$ converges,  therefore $$\displaystyle \lim_{M\rightarrow \infty } \|A R_\ep^M \|_\mathcal{Y} = 0.$$
\end{proposition}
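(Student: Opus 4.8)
The plan is to produce a \emph{telescoping} inequality controlling the partial sums of $\sum_M \|A R_\ep^M\|_\mathcal{Y}^2$ by a single fixed constant, after which convergence of the series, and hence vanishing of its general term, is immediate. The only inputs needed are the defining relation (\ref{eqn_RN}) and the elementary fact that a seminorm squared is non-negative.

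First I would record the energy identity obtained by taking $x = R_\ep^M$ as test function in (\ref{eqn_RN}):
$$
\|A R_\ep^M\|_\mathcal{Y}^2 + \ep\, \|R_\ep^M\|_b^2 = \ep\, b(R_\ep^{M-1}, R_\ep^M).
$$
The decisive step is then to expand the consecutive difference $\|R_\ep^{M-1} - R_\ep^M\|_b^2 = \|R_\ep^{M-1}\|_b^2 - 2 b(R_\ep^{M-1},R_\ep^M) + \|R_\ep^M\|_b^2$ and eliminate the cross term by substituting the value of $b(R_\ep^{M-1}, R_\ep^M)$ read off from the energy identity. This yields
$$
\|R_\ep^{M-1} - R_\ep^M\|_b^2 = \|R_\ep^{M-1}\|_b^2 - \|R_\ep^M\|_b^2 - \frac{2}{\ep}\,\|A R_\ep^M\|_\mathcal{Y}^2.
$$
Since the left-hand side is non-negative, this rearranges into the telescoping bound $\frac{2}{\ep}\|A R_\ep^M\|_\mathcal{Y}^2 \leq \|R_\ep^{M-1}\|_b^2 - \|R_\ep^M\|_b^2$.

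Summing from $M = 0$ to $N$ collapses the right-hand side to $\|R_\ep^{-1}\|_b^2 - \|R_\ep^N\|_b^2$; using $R_\ep^{-1} = -x_s$ and discarding the non-negative term $\|R_\ep^N\|_b^2$ gives
$$
\sum_{M=0}^N \|A R_\ep^M\|_\mathcal{Y}^2 \leq \frac{\ep}{2}\,\|x_s\|_b^2 .
$$
As the summands are non-negative and the partial sums are bounded uniformly in $N$, the series $\sum_M \|A R_\ep^M\|_\mathcal{Y}^2$ converges, and its general term necessarily tends to zero, which is the assertion $\lim_{M\to\infty}\|A R_\ep^M\|_\mathcal{Y} = 0$.

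The hard part is purely algebraic: discovering that the telescoping structure is hidden in the expansion of $\|R_\ep^{M-1} - R_\ep^M\|_b^2$, so that the cross term can be traded for a difference of consecutive squared seminorms plus the quantity one wants to sum. Everything after that is bookkeeping. The one point to verify carefully is the base case $M = 0$, where $R_\ep^{-1} = -x_s$ and hence $\|R_\ep^{-1}\|_b = \|x_s\|_b$; this is precisely what makes the telescoped bound finite and independent of $N$.
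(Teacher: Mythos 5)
Your proof is correct, and it takes a genuinely tighter route than the paper's. Both arguments start from the same energy identity obtained by testing (\ref{eqn_RN}) with $x = R_\ep^M$, but from there the paper proceeds by inequalities: it applies Cauchy--Schwarz to the cross term, invokes the previously established monotonicity $\Vert R_\ep^{M+1}\Vert_b < \Vert R_\ep^M \Vert_b$ from proposition \ref{prop_XepM_bnd} to chain two consecutive levels together via
$$
\|A R_\ep^{M+1} \|_\mathcal{Y}^2 + \|A R_\ep^M \|_\mathcal{Y}^2 < \ep\, b(R_\ep^{M-1},R_\ep^M),
$$
and then closes with an induction, arriving at the bound $\sum_{m} \|A R_\ep^m\|_\mathcal{Y}^2 \leq \ep \|x_s\|_b^2$. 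You instead exploit the \emph{exact} polarization identity
$$
\|R_\ep^{M-1} - R_\ep^M\|_b^2 = \|R_\ep^{M-1}\|_b^2 - \|R_\ep^M\|_b^2 - \frac{2}{\ep}\,\|A R_\ep^M\|_\mathcal{Y}^2,
$$
whose non-negativity telescopes in a single step. This buys you three things: the argument is self-contained (you never need proposition \ref{prop_XepM_bnd}; indeed your identity re-derives the monotonicity of $\Vert R_\ep^M \Vert_b$ as a by-product, since it shows $\|R_\ep^{M-1}\|_b^2 - \|R_\ep^M\|_b^2 \geq \frac{2}{\ep}\|A R_\ep^M\|_\mathcal{Y}^2 \geq 0$); it avoids the Cauchy--Schwarz losses and the slightly opaque ``immediate induction''; and it yields the sharper constant $\frac{\ep}{2}\|x_s\|_b^2$ in place of $\ep\|x_s\|_b^2$. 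Your attention to the base case is also on point: (\ref{eqn_RN}) does hold at $M=0$ with $R_\ep^{-1} = -x_s$, so $\|R_\ep^{-1}\|_b = \|x_s\|_b$ and the telescoped bound is finite, uniformly in $N$. The one hypothesis you use implicitly and should perhaps state --- that $b$ is a symmetric non-negative bilinear form, so that $\|\cdot\|_b$ is a seminorm and the square expansion is legitimate --- is exactly the standing assumption of section 3, so nothing is missing.
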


\begin{proof}
For any $M \in \mathbb{N}$, we have
$$
\|A R_\ep^{M+1} \|_\mathcal{Y}^2 + \ep \|R_\ep^{M+1} \|_b^2 = \ep b(R_\ep^{M},R_\ep^{M+1})
$$
which leads to
$$
\|A R_\ep^{M+1} \|_\mathcal{Y}^2 < \ep b(R_\ep^{M},R_\ep^{M+1}) \leq \ep \|R_\ep^M \|_b \|R_\ep^{M+1}\|_b
 < \ep \|R_\ep^M \|_b^2 = \ep b(R_\ep^{M-1},R_\ep^M) - \|A R_\ep^M \|_\mathcal{Y}^2.
$$
Therefore, we obtain
$$
\|A R_\ep^{M+1} \|_\mathcal{Y}^2 + \|A R_\ep^M \|_\mathcal{Y}^2 < \ep b(R_\ep^{M-1},R_\ep^M)
$$
and by an immediate induction
$$
\sum_{m=1}^M \|AR_\ep^m \|_\mathcal{Y}^2 \leq \ep \|R_\ep^0 \|^2_b \leq \ep \|x_s \|^2_b.
$$
Therefore , the series $\displaystyle \sum \|AR_\ep^m \|_\mathcal{Y}^2 $ converges. The property follows.
\end{proof}

Theorem  \ref{thm_proof_conv_RepM} follows from the previous proposition: indeed,
let $\varphi :\mathbb{N} \rightarrow \mathbb{N}$ be a strictly increasing map, and define $\tilde{R}_\ep^M := R_\ep^{\varphi(M)}$. 
As
$$
\Vert \tilde{R}_\ep^M \Vert_{A,b}^2 = \Vert A \tilde{R}_\ep^M \Vert_\mathcal{Y}^2 + \Vert \tilde{R}_\ep^M \Vert_b^2 \leq \Vert y\Vert_\mathcal{Y}^2+
\Vert x_s \Vert_b^2
$$
we have  that 
$(\tilde{R}_\ep^M)_{M\in\mathbb{N}}$ is a bounded sequence in $\mathcal{X}$. Consequently, there exists $\vartheta : \mathbb{N} \rightarrow\mathbb{N}$, a strictly increasing map such that $\hat{R}_\ep^M := \tilde{R}_\ep^{\vartheta(M)}$ weakly
converges to $R^\infty$ in $\mathcal{X}$. As $A$ is linear and continuous, we directly obtain from proposition \ref{prop_conv_ARM} that $A R^\infty = 0_Y$,
which implies $R^\infty = 0_\mathcal{X}$ as $A$ is one-to-one.

We hence have obtained that $\hat{R}_\ep^M \underset{\scriptscriptstyle M \rightarrow \infty}{\rightharpoonup} 0_\mathcal{X}$, or equivalently $\hat{X}_\ep^M \underset{\scriptscriptstyle M \rightarrow \infty}{\rightharpoonup} x_s$.
But we know from propositions \ref{prop_allestimXem} and \ref{prop_XepM_bnd} that $\|\hat{X}_\ep^M \|_{A,b} \leq \|x_s \|_{A,b}$, implying that $\hat{X}_\ep^M$ strongly converges to $x_s$, and it is then not difficult to show that the whole sequence 
$X_\ep^M$ strongly converges to $x_s$ as $M$ goes to infinity.

\subsection{The case of noisy data} \label{section_iterated_noisy_data}

In this section, we suppose that our exact data, denoted $y_{ex} \in \mathcal{Y}$, for which the data completion problem admits a unique solution
$x_s \in \mathcal{X}$, is corrupted by some noise. The obtained perturbed data, denoted $y^\delta \in \mathcal{Y}$, is supposed to verify
$\Vert y^\delta - y_{ex} \Vert_\mathcal{Y} \leq \delta$: in other words, we know the amplitude of noise on the data. On the other hand, there might
or might not be $x\in \mathcal{X}$ such that $A x = y^\delta$: we don't know if $y^\delta$ is an admissible solution or not.

From now on, for any $y \in \mathcal{Y}$, we will denote $X_\ep^M( y)$ the M-th iterated quasi-reversibility solution with $y$ as
data. Our main objective in this section is to propose an admissible strategy  to choose $M$ as a function of $\delta$, the amplitude of noise,
to ensure that, when $\delta$ goes to zero, $X_\ep^{M(\delta)}$ tends to the exact solution $x_s$. As pointed out in proposition \ref{prop_unstability}
and remark \ref{rk_unstability}, this is a crucial point in the study of data completion problems. 

A first important remark is the following: $A X_\ep^M(y)$ always converges to $y$, regardless of the admissibility of $y$ as data for the
data completion problem.

\begin{proposition} \label{prop_conv_general_AXepM}
For any $y \in \mathcal{Y}$, for any $\ep > 0$, $A X_\ep^M(y) \xrightarrow[M \rightarrow \infty]{\mathcal{Y}} y$. 
\end{proposition}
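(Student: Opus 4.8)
The plan is to reduce the general (possibly non-admissible) case to the admissible one already treated in the exact-data analysis, using a density argument together with the uniform bound of Proposition \ref{prop_allestimXem}. First I would observe that, since $X_\ep^{-1} = 0_\mathcal{X}$ and each $X_\ep^M$ is obtained from $X_\ep^{M-1}$ by solving the coercive variational problem $a_\ep(X_\ep^M, x) = (y,Ax)_\mathcal{Y} + \ep\, b(X_\ep^{M-1},x)$, whose right-hand side depends linearly on the pair $(y, X_\ep^{M-1})$, the solution map $y \mapsto X_\ep^M(y)$ is linear for every fixed $M$ and $\ep$ (a straightforward induction using uniqueness in Lax--Milgram). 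Consequently the operators $T_M : \mathcal{Y} \to \mathcal{Y}$ defined by $T_M y := A X_\ep^M(y) - y$ are linear, and estimate (c) of Proposition \ref{prop_allestimXem}, which holds for \emph{arbitrary} data, gives $\Vert T_M y \Vert_\mathcal{Y} \leq \Vert y \Vert_\mathcal{Y}$; hence $T_M$ has operator norm $\leq 1$, uniformly in $M$.

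Next I would invoke the exact-data results. For $y \in \mathcal{Y}_{adm}$ with solution $x_s$, one has $T_M y = A X_\ep^M(y) - y = A R_\ep^M$, so Proposition \ref{prop_conv_ARM} already establishes $\Vert T_M y \Vert_\mathcal{Y} \to 0$ as $M \to \infty$. Thus $T_M y \to 0$ in $\mathcal{Y}$ for every $y$ in the subspace $\mathsf{Im}(A) = \mathcal{Y}_{adm}$, which is dense in $\mathcal{Y}$ by assumption.

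The final step transfers convergence from this dense subspace to all of $\mathcal{Y}$. Given arbitrary $y \in \mathcal{Y}$ and $\eta > 0$, I would use $\overline{\mathsf{Im}(A)}^\mathcal{Y} = \mathcal{Y}$ to pick $y' \in \mathcal{Y}_{adm}$ with $\Vert y - y' \Vert_\mathcal{Y} \leq \eta/2$, and then split, using linearity and the uniform bound $\Vert T_M \Vert \leq 1$,
\[
\Vert T_M y \Vert_\mathcal{Y} \leq \Vert T_M(y - y') \Vert_\mathcal{Y} + \Vert T_M y' \Vert_\mathcal{Y} \leq \Vert y - y' \Vert_\mathcal{Y} + \Vert T_M y' \Vert_\mathcal{Y} \leq \tfrac{\eta}{2} + \Vert T_M y' \Vert_\mathcal{Y}.
\]
Since $y'$ is admissible, the second term is $\leq \eta/2$ for all $M \geq M_0$ for some $M_0$, so $\Vert T_M y \Vert_\mathcal{Y} \leq \eta$ for $M \geq M_0$, which is exactly $A X_\ep^M(y) \to y$ in $\mathcal{Y}$.

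I expect the main conceptual obstacle to be precisely that for non-admissible $y$ there is no $x_s$, so the residual $R_\ep^M$ and the Section~4.2 machinery are not directly available; it is the data-independent estimate of Proposition \ref{prop_allestimXem}(c) that furnishes the uniform operator bound, and this is the crucial ingredient that makes the $\eta/2$-argument work and allows the reduction to the dense admissible set. (As a sanity check, a self-adjoint functional calculus for $R := AA^*$ on $\mathcal{Y}$ gives the closed form $A X_\ep^M - y = -\,g(R)^{M+1} y$ with $g(\mu) = \ep(1-\mu)/(\ep + (1-\ep)\mu)$, and the conclusion follows because $g(R)^{M+1}$ converges strongly to the orthogonal projection onto $\ker(AA^*) = (\overline{\mathsf{Im}(A)})^\perp = \{0\}$; but this departs from the elementary, variational spirit adopted in the paper, so I would favour the density argument above.)
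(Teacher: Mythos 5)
Your proof is correct and follows essentially the same route as the paper's: approximate $y$ by an admissible $y'$ using the density of $\mathcal{Y}_{adm}$, exploit the linearity of $y \mapsto X_\ep^M(y)$ together with the $M$-independent stability estimate of Proposition \ref{prop_allestimXem} to control the perturbation uniformly in $M$, and invoke the exact-data convergence (Proposition \ref{prop_conv_ARM}) for $y'$. The only cosmetic differences are that you package the argument through the operators $T_M$ with $\Vert T_M \Vert \leq 1$ via estimate (c) and prove the linearity induction explicitly, whereas the paper uses estimate (b) in a three-term $\eta/3$ triangle-inequality split and leaves linearity implicit (your closed-form sanity check, incidentally, presumes the specific normalization $b(x,x) = \Vert x \Vert_\mathcal{X}^2 - \Vert Ax \Vert_\mathcal{Y}^2$, but you rightly set it aside).
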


\begin{proof}
As $\mathcal{Y}_{adm}$ is dense in $\mathcal{Y}$, for any $\eta > 0$, it exists $y_\eta \in \mathcal{Y}_{adm}$ such that $\Vert  y_\eta - y\Vert_\mathcal{Y}\leq \frac{\eta}{3}$.
Proposition \ref{prop_allestimXem} (b) implies $\Vert A X_\ep^M(y) - A X_\ep^M(y_\eta) \Vert_\mathcal{Y} \leq \Vert y - y_\eta \Vert_\mathcal{Y} \leq \frac{\eta}{3} $.
Finally, as $y_\eta \in \mathcal{Y}_{adm}$, there exists $M_\eta > 0$ such that for any $M \geq M_\eta$, 
$
\Vert A X_\ep^M(y_\eta) - y_\eta \Vert_\mathcal{Y} \leq \frac{\eta}{3}.
$
The result follows.
\end{proof}

Next proposition defines the admissible choices of $M$ to ensure the desired convergence:

\begin{proposition} \label{prop_adm_str_delta}
For any $\ep>0$, for any choice of $M:=M(\delta)$ such that $M(\delta) \xrightarrow[\delta \rightarrow 0]{} +\infty$
and $\delta \sqrt{M(\delta)} \xrightarrow[\delta \rightarrow 0]{} 0$, we have $X_\ep^{M(\delta)}(y^\delta) \xrightarrow[\delta \rightarrow 0]{\mathcal{X}} x_s$.
\end{proposition}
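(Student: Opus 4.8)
The plan is to decompose the total error into an \emph{approximation} contribution, arising from running only finitely many iterations on the exact data, and a \emph{noise propagation} contribution, measuring the effect of replacing $y_{ex}$ by $y^\delta$ at a fixed iteration level, and to control each separately. For the fixed $\ep$, the triangle inequality in the $\Vert\cdot\Vert_{A,b}$ norm gives
$$
\Vert X_\ep^{M(\delta)}(y^\delta) - x_s \Vert_{A,b} \leq \Vert X_\ep^{M(\delta)}(y^\delta) - X_\ep^{M(\delta)}(y_{ex})\Vert_{A,b} + \Vert X_\ep^{M(\delta)}(y_{ex}) - x_s\Vert_{A,b}.
$$
Since $\Vert\cdot\Vert_{A,b}$ is equivalent to $\Vert\cdot\Vert_\mathcal{X}$, it suffices to prove that both terms tend to $0$ as $\delta \to 0$.

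The second term is the approximation error on exact data. As $y_{ex} \in \mathcal{Y}_{adm}$ with solution $x_s$, theorem \ref{thm_proof_conv_RepM} yields $X_\ep^M(y_{ex}) \to x_s$ as $M \to \infty$ for the fixed $\ep$. Because the hypothesis guarantees $M(\delta) \to +\infty$ as $\delta \to 0$, composing these two limits shows the second term vanishes, with no constraint yet on the \emph{rate} of growth of $M(\delta)$.

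For the first term I would exploit the \emph{linearity} of the map $y \mapsto X_\ep^M(y)$: the right-hand side of the recursion defining $X_\ep^M$ is linear in $y$ and in $X_\ep^{M-1}$, and since $X_\ep^{-1} = 0_\mathcal{X}$, an immediate induction shows $y \mapsto X_\ep^M(y)$ is linear. Hence $X_\ep^{M}(y^\delta) - X_\ep^{M}(y_{ex}) = X_\ep^{M}(y^\delta - y_{ex})$, and I may apply the \emph{data-independent} estimates of propositions \ref{prop_allestimXem}(b) and \ref{prop_bound_XepM} (valid whether or not the data is admissible) to $z := y^\delta - y_{ex}$, which satisfies $\Vert z\Vert_\mathcal{Y}\leq\delta$. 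This gives
$$
\Vert X_\ep^{M}(z)\Vert_{A,b}^2 = \Vert A X_\ep^M(z)\Vert_\mathcal{Y}^2 + \Vert X_\ep^M(z)\Vert_b^2 \leq \Big(1 + \tfrac{2(M+1)}{\ep}\Big)\delta^2,
$$
so that the first term is bounded by a constant multiple of $\delta + \delta\sqrt{M(\delta)}/\sqrt{\ep}$.

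The conclusion then hinges on matching the $\sqrt{M}$ growth of this bound against the noise amplitude, and this is the one delicate point. The noise is amplified by a factor growing like $\sqrt{M}$ with the iteration level, so the first term vanishes \emph{precisely} when $\delta\sqrt{M(\delta)}\to 0$; this is exactly why the two apparently competing requirements $M(\delta)\to+\infty$ and $\delta\sqrt{M(\delta)}\to 0$ must appear together in the statement. Combining the two vanishing contributions and invoking the equivalence of $\Vert\cdot\Vert_{A,b}$ with $\Vert\cdot\Vert_\mathcal{X}$ then yields $X_\ep^{M(\delta)}(y^\delta)\to x_s$ in $\mathcal{X}$, completing the argument.
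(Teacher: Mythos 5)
Your proposal is correct and takes essentially the same route as the paper: the identical triangle-inequality splitting, with the approximation term handled by theorem \ref{thm_proof_conv_RepM} together with $M(\delta)\to+\infty$, and the noise term controlled by the stability bound $\Vert X_\ep^M(y^\delta)-X_\ep^M(y_{ex})\Vert_{A,b}^2 \leq \bigl(1+\tfrac{2(M+1)}{\ep}\bigr)\delta^2$ obtained from propositions \ref{prop_allestimXem} and \ref{prop_bound_XepM} applied to $z=y^\delta-y_{ex}$. Your only addition is to spell out the linearity of $y\mapsto X_\ep^M(y)$ (by induction on the recursion from $X_\ep^{-1}=0_\mathcal{X}$), a step the paper uses implicitly.
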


\begin{proof}
obviously, we have, for any $\ep>0$ and $M \in \mathbb{N}$
$$
\|X_\ep^M(y^\delta) - x_s \|_{A,b} \leq \|X_\ep^M(y^\delta)- X_\ep^M(y) \|_{A,b} + \|X_\ep^M(y) - x_s \|_{A,b}.
$$
If $M:= M(\delta)$ verifies $\lim_{\delta \rightarrow 0} M(\delta) = +\infty$, theorem \ref{thm_proof_conv_RepM}
implies directly that $\|X_\ep^{M(\delta)}(y) - x_s \|_{A,b} \xrightarrow[\delta \rightarrow 0]{} 0$. Furthermore, 
propositions \ref{prop_allestimXem} and \ref{prop_XepM_bnd} imply
$$
\|X_\ep^M(y^\delta)- X_\ep^M(y) \|_{A,b}^2 \leq \Big(1 + \frac{2(M+1)}{\ep} \Big)\|y^\delta - y \|_\mathcal{Y}^2 = \Big(1 + \frac{2(M+1)}{\ep} \Big)\, \delta^2.
$$ 
The result follows.
\end{proof}

Proposition \ref{prop_adm_str_delta} defines the admissible strategies to choose $M$ depending on $\delta$. An admissible choice
could be $M(\delta) := \left\lfloor \frac{1}{\sqrt{\delta}} \right\rfloor $ for example.  But such a choice, if it guarantees the convergence 
of the method, does not correspond to any precise objective. We therefore focus on another method to choose $M (\delta)$.

Let $r>1$.
For a fixed $\ep>0$, we define $\mathbb{M}_\delta := \left\lbrace M \in \mathbb{N},\ \|AX_\ep^M(y^\delta) - y^\delta \|_\mathcal{Y} \leq r\delta \right\rbrace$. 
Proposition \ref{prop_conv_general_AXepM} implies that $\mathbb{M}_\delta$ is non-empty, and we define $M(\delta)$ as the minimum element
of $\mathbb{M}_\delta$: $M(\delta) := \min \left\lbrace M \in \mathbb{M}(\delta) \right\rbrace$. 

$M(\delta)$ is chosen accordingly to the \textit{Morozov discrepancy principle:} it is the first $M \in \mathbb{N}$ such that 
the distance between $A X_\ep ^M$ and $y^\delta$ is (approximately) equal to the distance between $A x_s = y$ and
$y^\delta$:
$\Vert A X_\ep^M - y^\delta \Vert_{\mathcal{Y}} \approx \Vert A x_s - y^\delta \Vert_{\mathcal{Y}} $.
This method to choose $M$ depending on $\delta$ has two interesting characteristics:
\begin{enumerate}
\item with this choice, one does the minimum number of iterations of the iterated quasi-reversibility  method required to obtain an error in the residual $\Vert A X_\ep^M - y^\delta \Vert_\mathcal{Y}$
of same order of the error on the data.
\item such choice is admissible, in the sense of proposition \ref{prop_adm_str_delta}.
\end{enumerate}

We  now prove that $M(\delta)$ is
an admissible choice.

\begin{proposition}
$M(\delta) \xrightarrow[]{\delta \rightarrow 0} + \infty$.
\end{proposition}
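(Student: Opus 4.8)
The plan is to argue by contradiction, exploiting that the Morozov threshold $r\delta$ shrinks with $\delta$ while a fixed number of iterations can only reduce the residual by a fixed amount once the data is close to the exact data. Suppose $M(\delta)$ does not tend to $+\infty$ as $\delta \to 0$. Since $M(\delta)$ takes values in $\mathbb{N}$, there is then a bounded subsequence, from which I extract a sequence $\delta_k \to 0$ and a fixed integer $\bar M$ with $M(\delta_k) = \bar M$ for every $k$. By the very definition of $M(\delta_k)$ as the least element of $\mathbb{M}_{\delta_k}$, this yields $\|A X_\ep^{\bar M}(y^{\delta_k}) - y^{\delta_k}\|_\mathcal{Y} \leq r\,\delta_k$ for all $k$.

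The first analytic step is a continuity-in-data argument at the \emph{fixed} iteration level $\bar M$. The estimate established in the proof of Proposition \ref{prop_adm_str_delta}, namely $\|X_\ep^{\bar M}(y^{\delta_k}) - X_\ep^{\bar M}(y_{ex})\|_{A,b}^2 \leq \big(1 + 2(\bar M+1)/\ep\big)\,\delta_k^2$, shows with $\bar M$ held constant that $X_\ep^{\bar M}(y^{\delta_k}) \to X_\ep^{\bar M}(y_{ex})$ in $\mathcal{X}$, hence $A X_\ep^{\bar M}(y^{\delta_k}) \to A X_\ep^{\bar M}(y_{ex})$ in $\mathcal{Y}$; since also $\|y^{\delta_k} - y_{ex}\|_\mathcal{Y} \leq \delta_k \to 0$, passing to the limit in the inequality above gives $\|A X_\ep^{\bar M}(y_{ex}) - y_{ex}\|_\mathcal{Y} \leq 0$, that is $A X_\ep^{\bar M}(y_{ex}) = y_{ex}$.

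It remains to rule this out, and this is the step I expect to be the real obstacle: I must show that for admissible nonzero exact data the residual of every finite iterate is \emph{strictly} positive, i.e. $\|A X_\ep^{M}(y_{ex}) - y_{ex}\|_\mathcal{Y} > 0$ for all $M \in \mathbb{N}$. Here I would invoke the strict monotonicity of estimate (d) in Proposition \ref{prop_allestimXem}, valid since $y_{ex} \neq 0$: if one had $\|A X_\ep^{\bar M}(y_{ex}) - y_{ex}\|_\mathcal{Y} = 0$ for some $\bar M$, then applying (d) at level $\bar M + 1$ would force $\|A X_\ep^{\bar M+1}(y_{ex}) - y_{ex}\|_\mathcal{Y} < \|A X_\ep^{\bar M}(y_{ex}) - y_{ex}\|_\mathcal{Y} = 0$, which is absurd for a norm. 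Thus the residual of every finite iterate stays strictly positive, contradicting the conclusion $A X_\ep^{\bar M}(y_{ex}) = y_{ex}$ of the previous paragraph. Hence $M(\delta) \to +\infty$.

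The two delicate points worth care in the write-up are that the extracted subsequence must be made \emph{constant} in $\bar M$ (not merely bounded) so that the fixed-level continuity estimate applies, and that the strictness in Proposition \ref{prop_allestimXem}(d) — itself traced back to Lemma \ref{lemma_sign_b} and $y_{ex}\neq 0$ — is exactly what forbids a finite number of iterations from ever annihilating the residual. Everything else is a routine limit passage.
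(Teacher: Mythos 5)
Your proof is correct, and it reaches the paper's conclusion by a genuinely different final step. The first two stages coincide with the paper's argument: the author also extracts an eventually constant subsequence $M(\delta_n)=M_\infty$ and passes to the limit in the Morozov inequality to get $A X_\ep^{M_\infty}(y_{ex})=y_{ex}$ (using, instead of your $\bigl(1+2(\bar M+1)/\ep\bigr)^{1/2}\delta$ bound from Proposition \ref{prop_adm_str_delta}, the sharper Lipschitz estimate $\Vert A X_\ep^{M}(y_{ex})-A X_\ep^{M}(y^{\delta})\Vert_\mathcal{Y}\leq \Vert y_{ex}-y^{\delta}\Vert_\mathcal{Y}$, which follows from linearity of $y\mapsto X_\ep^M(y)$ and estimate (c) of Proposition \ref{prop_allestimXem} --- a cosmetic difference). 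Where you diverge is in ruling out $A X_\ep^{M_\infty}(y_{ex})=y_{ex}$: the paper plugs this identity into the variational formulation of the iteration and runs a three-case analysis on $M_\infty\in\{-1\}\cup\mathbb{N}$, concluding $y_{ex}=0$ directly, via Corollary \ref{coro_1}, or via $x_\ep^{(M_\infty)}(y_{ex})=0$ and Corollary \ref{coro_2}; you instead invoke the strict decrease of the residual, estimate (d) of Proposition \ref{prop_allestimXem}, so that a vanishing residual at level $\bar M$ would force $\Vert A X_\ep^{\bar M+1}(y_{ex})-y_{ex}\Vert_\mathcal{Y}<0$, which is absurd. Your ending is shorter, needs no case analysis and no appeal to the corollaries, and makes transparent that the whole obstruction is the strictness in (d); the paper's ending buys a more precise piece of information --- it identifies exactly which object vanishes, namely the derivative $x_\ep^{(M_\infty)}$, consistent with the series representation of $X_\ep^M$. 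Note that both routes ultimately rest on the same source of strictness, Lemma \ref{lemma_sign_b}, and hence both require $y_{ex}\neq 0$, a hypothesis the paper uses implicitly (its $M_\infty=-1$ case says so) and that you correctly flag; your two ``delicate points'' (constancy, not mere boundedness, of the subsequence, and strictness of (d)) are exactly the right ones.
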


\begin{proof}
Suppose it is not the case. Then there exists a sequence of strictly positive real numbers $\delta_n$ and a positive constant $C$ such that
$\delta_n \xrightarrow[]{n \rightarrow \infty }0$ and $M(\delta_n) \leq C$. It implies the existence of a subsequence (still denoted
$\delta_n$) and $M_\infty \in \mathbb{N}$ such that $\delta_n \xrightarrow[]{n \rightarrow \infty }0$
and $M(\delta_n) \xrightarrow[]{n \rightarrow \infty} M_\infty$. In particular, it exists $N\in \mathbb{N}$ such that for all $n \geq N$,
$M(\delta_n) = M_\infty$.

For $n \geq N$,
the definition of $M(\delta)$ implies
$$
\Vert A X_\ep^{M_\infty}(y^{\delta_n})  - y_{ex} \Vert_\mathcal{Y} \leq \Vert A X_\ep^{M_\infty}(y^\delta_n)  - y^{\delta_n} \Vert_\mathcal{Y}
+ \Vert y^{\delta_n} - y_{ex} \Vert_\mathcal{Y} \leq (r+1) \delta_n \xrightarrow[]{n \rightarrow \infty} 0.
$$
Consequently, using proposition \ref{prop_allestimXem} we have
\begin{align*}
\|A X_\ep^{M_\infty}(y_{ex}) - y_{ex} \|_\mathcal{Y} &\leq \|A X_\ep^{M_\infty}(y_{ex}) - A X_\ep^{M_\infty}(y^{\delta_n}) \|_\mathcal{Y}
 + \|A X_\ep^{M_\infty}(y^{\delta_n}) - y_{ex} \|_\mathcal{Y}  \\
  & \leq  \Vert  y_{ex} - y^{\delta_n} \Vert _\mathcal{Y} + (r+1) \delta_n = (r+2) \delta_n \xrightarrow[n \rightarrow \infty]{} 0,
 \end{align*}
that is $AX_\ep^{M_\infty}(y_{ex}) = y_{ex}$. If $M_\infty = -1$, we directly obtain $y_{ex} = 0$, in contradiction with the hypothesis.
If $M_\infty = 0$, we obtain $x_\ep = x_s$, which again (corollary \ref{coro_1}) implies $y_s = 0$
Finally, if $M_\infty > 0$, as for all $x\in \mathcal{X}$, 
$$
(AX_\ep^{M_\infty}(y_{ex}),Ax)_\mathcal{Y} + \ep b (X_\ep^{M_\infty}(y_{ex}),x) = \ep b(X_\ep^{M_\infty-1}(y_{ex}),x) + (y_{ex},Ax)_\mathcal{Y},
$$
we obtain $X_\ep^{M_\infty}(y_{ex})= X_\ep^{M_\infty-1}(y_{ex})$, or equivalently $x_\ep^{(M_\infty)}(y_{ex}) = 0$, which again implies $y_{ex} = 0$
by corollary \ref{coro_2}. We obtain once again a contradiction, which ends the proof.
\end{proof}

\begin{proposition}
$\displaystyle \lim_{\delta \rightarrow 0} \delta \sqrt{M(\delta)} = 0$.
\end{proposition}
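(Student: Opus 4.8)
The plan is to combine the Morozov stopping criterion with the square-summability of the exact-data residuals from Proposition \ref{prop_conv_ARM}, upgrading mere boundedness of $M(\delta)\delta^2$ to convergence to $0$ by passing to the tail of that convergent series. Throughout I abbreviate $N := M(\delta)$ and use that the previous proposition gives $N \to \infty$ as $\delta \to 0$; in particular I may assume $N \geq 1$ for $\delta$ small.

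First I would extract a lower bound on the exact-data residual at the step just before stopping. Since $N = \min \mathbb{M}_\delta$, the index $N-1$ does not belong to $\mathbb{M}_\delta$, so $\|A X_\ep^{N-1}(y^\delta) - y^\delta\|_\mathcal{Y} > r\delta$. The map $y \mapsto X_\ep^M(y)$ is linear (clear by induction on $M$ from the defining variational equation, or from the representation $X_\ep^M = \sum_{m=0}^M (-1)^m \frac{\ep^m}{m!} x_\ep^{(m)}$), hence so is the residual map $y \mapsto A X_\ep^M(y) - y$. Writing $y^\delta = y_{ex} + (y^\delta - y_{ex})$ and applying estimate (c) of Proposition \ref{prop_allestimXem} to the data $y^\delta - y_{ex}$ gives
$$\|A X_\ep^{N-1}(y^\delta - y_{ex}) - (y^\delta - y_{ex})\|_\mathcal{Y} \leq \|y^\delta - y_{ex}\|_\mathcal{Y} \leq \delta.$$
A reverse triangle inequality applied to $A X_\ep^{N-1}(y_{ex}) - y_{ex} = [A X_\ep^{N-1}(y^\delta) - y^\delta] - [A X_\ep^{N-1}(y^\delta - y_{ex}) - (y^\delta - y_{ex})]$ then yields $\|A X_\ep^{N-1}(y_{ex}) - y_{ex}\|_\mathcal{Y} > (r-1)\delta$, and since $r > 1$ the constant $r-1$ is strictly positive.

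Next, because $\|A X_\ep^M(y_{ex}) - y_{ex}\|_\mathcal{Y}$ is strictly decreasing in $M$ (estimate (d) of Proposition \ref{prop_allestimXem}), every index $m \leq N-1$ satisfies $\|A X_\ep^m(y_{ex}) - y_{ex}\|_\mathcal{Y} \geq (r-1)\delta$. I would then invoke Proposition \ref{prop_conv_ARM}, which gives $\sum_{m\geq 1} \|A R_\ep^m\|_\mathcal{Y}^2 \leq \ep \|x_s\|_b^2 < \infty$ with $A R_\ep^m = A X_\ep^m(y_{ex}) - y_{ex}$. The naive bound, summing from $m=1$ to $N-1$, only shows that $N\delta^2$ is bounded; the decisive point is to sum instead from a fixed index $K$. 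Setting $\tau_K := \sum_{m \geq K} \|A X_\ep^m(y_{ex}) - y_{ex}\|_\mathcal{Y}^2$, for $\delta$ small enough that $N-1 \geq K$ one gets
$$(N-K)(r-1)^2 \delta^2 \leq \sum_{m=K}^{N-1} \|A X_\ep^m(y_{ex}) - y_{ex}\|_\mathcal{Y}^2 \leq \tau_K,$$
so $N\delta^2 \leq K\delta^2 + \tau_K/(r-1)^2$. Taking $\limsup_{\delta \to 0}$ at fixed $K$ kills $K\delta^2$, giving $\limsup_{\delta\to 0} M(\delta)\delta^2 \leq \tau_K/(r-1)^2$; letting $K \to \infty$ sends $\tau_K \to 0$, whence $M(\delta)\delta^2 \to 0$, i.e. $\delta\sqrt{M(\delta)} \to 0$.

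I expect the main obstacle to be precisely this passage from boundedness to the limit: estimating against the full sum $\ep\|x_s\|_b^2$ only bounds $\delta\sqrt{M(\delta)}$, and the tail argument is what forces the limit to be zero. It relies on $M(\delta) \to \infty$ (so that the window $[K, N-1]$ is eventually non-empty for each fixed $K$) together with the uniform lower bound $(r-1)\delta$ on the residuals in that window. The linearity of $y \mapsto X_\ep^M(y)$ and the use of estimate (c) rather than (b), which is what makes the argument valid for every $r > 1$, are the remaining points that require care.
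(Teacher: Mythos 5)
Your proof is correct and follows essentially the same route as the paper: the Morozov stopping rule plus linearity of $y \mapsto X_\ep^M(y)$ and estimate (c) give the lower bound $\Vert A X_\ep^{M(\delta)-1}(y_{ex}) - y_{ex} \Vert_\mathcal{Y} > (r-1)\delta$, which is then played against the square-summability of the exact-data residuals from Proposition \ref{prop_conv_ARM} and their monotonicity (estimate (d)). The only difference is presentational: where the paper simply invokes the classical fact that a nonnegative decreasing summable sequence $a_m$ satisfies $m\,a_m \to 0$, you prove it inline via the tail-window estimate $(N-K)(r-1)^2\delta^2 \leq \tau_K$ followed by the double limit in $\delta$ and $K$ --- the standard argument for that fact, made explicit.
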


\begin{proof}
by definition, we have $\|A X_\ep^{M(\delta) - 1}(y^\delta) - 
y^\delta \|_\mathcal{Y}> r\delta$. Therefore
\begin{align*}
\|A X_\ep^{M(\delta) -1}(y_{ex}) - y_{ex} \|_\mathcal{Y} & = \| A X_\ep^{M(\delta) -1}(y_{ex}) - A X_\ep^{M(\delta)-1}(y^\delta)
+ A X_\ep^{M(\delta)-1}(y^\delta) - y^\delta  + y^\delta - y_{ex} \|_\mathcal{Y} \\
 & \geq \| A X_\ep^{M(\delta)-1}(y^\delta) - y^\delta \|_\mathcal{Y} - \|A X_\ep^{M(\delta) -1}(y_{ex}) - A X_\ep^{M(\delta)-1}(y^\delta) - (y_{ex} - y^\delta)  \|_\mathcal{Y} \\
  & > (r-1) \delta
\end{align*}
as proposition \ref{prop_allestimXem}, estimate (c) implies 
$$
\|A X_\ep^{M(\delta) -1}(y_{ex}) - A X_\ep^{M(\delta)-1}(y^\delta) - (y_{ex} - y^\delta)  \|_\mathcal{Y}
\leq \|y_{ex} - y^\delta \|_\mathcal{Y} \leq \delta.
$$
We hence have obtained:
$$
(M(\delta)-1) \delta^2(r-1)^2 \leq (M(\delta)-1) \|A X_\ep^{M(\delta) -1}(y_{ex}) - y_{ex} \|_\mathcal{Y}^2 = 
(M(\delta)-1) \|A R_\ep^{M(\delta) -1}(y_{ex}) \|_\mathcal{Y}^2. 
$$
As $\|AR_\ep^{m+1}(y_{ex})\|_\mathcal{Y} < \|AR_\ep^m(y_{ex}) \|_\mathcal{Y}$ and  $\sum_m \|A R_\ep^m(y_{ex}) \|_\mathcal{Y}^2$ converges, 
$ m \|AR_\ep^m(y_{ex})\|_\mathcal{Y}^2 $ tends to zero as $m$ goes to infinity. 
% Just check that if eta_n is such that sum eta_n converges and eta_n decreases, then n eta_n / 2 < sum_{k=n/2}^n eta_k
Therefore, $(M(\delta)-1) \|A R_\ep^{M(\delta) -1}(y_{ex}) \|_\mathcal{Y}^2 $ goes to zero as $\delta$ tends to zero, implying that
$$
 \lim_{\delta \rightarrow 0} (M(\delta)-1) \delta^2(r-1)^2 = 0.
$$
The result follows.
\end{proof}

\section{Quasi-reversibility methods for data completion problems for the Poisson's equation and the heat equation} \label{sect_QRconcrete}

We will now go back to the data completion problems described in the introduction, and verify that they correspond to the abstract setting
introduced in section 2.

\subsection{Poisson's equation}

As mentioned in the introduction, the data completion problem for Poisson's equation is: for $(f,g_D,g_N) \in L^2(\Omega) \times L^2(\Gamma)
\times L^2(\Gamma)$, find $u \in H^1(\Omega)$ s.t.
$$
\left\lbrace
\begin{array}{ccl}
- \nabla \cdot \sigma \nabla u &=& f \text{ in } \Omega \\
 u &= &g_D \text{ on } \Gamma \\
 \sigma \nabla u \cdot \nu &= &g_N \text{ on } \Gamma.
\end{array}
\right.
$$
We could directly use this formulation of the problem to obtain a quasi-reversibility regularization. However, if we do so, we obtain a fourth-order
variational problem, which is rather difficult to discretize as we would need $C^1$ or non-conforming finite elements which are seldom available
in numerical solvers. Therefore, we first modify the problem by introducing the flux $\mathbf{p} := \sigma \cdot \nabla u$ as an additional unknown,
following the idea introduced in \cite{Dar2}.  It verifies $- \nabla \cdot \mathbf{p} = -\nabla \cdot \sigma \nabla u = f \in L^2(\Omega)$
and $\mathbf{p} \cdot \nu = \sigma \nabla u \cdot \nu = g_N \in L^2(\Gamma)$, hence 
$$
\mathbf{p} \in  \tilde{H}_{\mathsf{div}}(\Omega)
 := \left\lbrace \mathbf{q} \in L^2(\Omega)^d,\ \nabla \cdot \mathbf{q} \in L^2(\Omega), \ \mathbf{q} \cdot \nu \in L^2(\Gamma) \right\rbrace.
$$
$\tilde{H}_{\mathsf{div}}(\Omega)$, endowed with the scalar product 
$$
(\mathbf{p}, \mathbf{q})_{\tilde{H}_{\mathsf{div}}} := \int_\Omega \Big( \mathbf{p}\cdot \mathbf{q} + (\nabla \cdot \mathbf{p})\
( \nabla \cdot \mathbf{q} ) \Big) \, dx + \int_\Gamma (\mathbf{p}\cdot \nu) \, (\mathbf{q}\cdot \nu)\, dS
$$
is an Hilbert space \cite{Fernand}. We modify the data completion problem the following way:

\begin{problem}
For $f$, $g_D$ and $g_N$ in respectively $L^2(\Omega)$, $L^2(\Gamma)$ and $L^2(\Gamma)$, 
find $(u,\mathbf{p}) \in H^1(\Omega) \times \tilde{H}_{\mathsf{div}}(\Omega)$ such that 
$$
\left\lbrace
\begin{array}{rcl}
\sigma \nabla u & = & \mathbf{p} \text{ in } \Omega \\
- \nabla \cdot \mathbf{p} & = & f \text{ in } \Omega \\
u & = & g_D \text{ on } \Gamma \\
\mathbf{p}\cdot  \nu & = & g_N \text{ on } \Gamma .
\end{array}
\right.
$$
\end{problem}
Obviously, this is exactly the same problem as previously. However, this small modification will lead to a second-order variational
quasi-reversibility regularization in the product space $H^1\times \tilde{H}_{\mathsf{div}}$, easily discretized using standard finite-elements.

To fit in our abstract setting, we introduce the operator 
\begin{align*}
A : & (u,\mathbf{p})  \in \mathcal{X} = H^1(\Omega) \times  \tilde{H}_{\mathsf{div}}(\Omega) \\
  & \mapsto (\sigma\nabla u - \mathbf{p}, -\nabla \cdot \mathbf{p}, u_{\vert \Gamma}, \mathbf{p}\cdot \nu_{\vert \Gamma})  \in
\mathcal{Y} = L^2(\Omega)^d \times L^2(\Omega) \times L^2(\Gamma) \times L^2(\Gamma).
\end{align*}
The spaces $\mathcal{X}$ and $\mathcal{Y}$, endowed respectively with the scalar products
$$
\Big((u,\mathbf{p}), (v,\mathbf{q}) \Big)_\mathcal{X} := (u,v)_{H^1} + (\mathbf{p},\mathbf{q})_{\tilde{H}_{\mathsf{div}}}
$$
and
$$
\Big( (\mathbf{F}, f, g, h), (\tilde{\mathbf{F}},\tilde{f}, \tilde{g},\tilde{h} )\Big)_\mathcal{Y} :=
\int_\Omega \Big( \mathbf{F} \cdot \tilde{\mathbf{F}} + f\, \tilde{f} \Big) dx + \int_\Gamma \Big(  g\, \tilde{g} + h\, \tilde{h}\Big) \, ds
$$
are obviously Hilbert spaces, and
the data completion problems can be rewritten: find $(u,\mathbf{p})\in \mathcal{X}$ such that
$A (u,\mathbf{p}) = (\mathbf{0},f,g_D,g_N) \in \mathcal{Y}$. 

\begin{proposition}
$A$ is  linear, continuous, one-to-one. It is  not onto but has dense range. Additionally, $A$ is not a compact operator.
\end{proposition}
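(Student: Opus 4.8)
The plan is to dispatch the algebraic properties first and then concentrate on the three statements carrying analytic content — injectivity, density of the range, and non-surjectivity — while handling non-compactness by an explicit oscillating sequence. Linearity is immediate, since each of the four components of $A(u,\mathbf p)$ is linear in $(u,\mathbf p)$. Continuity follows componentwise from the definitions of the spaces: $\sigma\in W^{1,\infty}$ bounds $\|\sigma\nabla u-\mathbf p\|_{L^2(\Omega)^d}$ by $\|(u,\mathbf p)\|_\mathcal X$; the divergence $-\nabla\cdot\mathbf p$ and the normal trace $\mathbf p\cdot\nu_{|\Gamma}$ are controlled directly by $\|\mathbf p\|_{\tilde H_{\mathsf{div}}}$; and the Dirichlet trace $u_{|\Gamma}$ is controlled by the trace theorem on the Lipschitz domain $\Omega$. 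For injectivity I would observe that $A(u,\mathbf p)=0$ forces $\mathbf p=\sigma\nabla u$, hence $-\nabla\cdot\sigma\nabla u=0$ in $\Omega$ together with $u=0$ and $\sigma\nabla u\cdot\nu=0$ on $\Gamma$; this is the homogeneous data completion problem, so the uniqueness result recalled in the introduction (Cauchy data on $\Gamma$, with $\mathrm{meas}(\Gamma)>0$) gives $u\equiv 0$ and then $\mathbf p=0$.

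Density of the range is the first genuinely technical point, and I would establish it by showing $\mathsf{Im}(A)^\perp=\{0\}$. Let $(\mathbf F,f,g,h)\in\mathcal Y$ be orthogonal to $\mathsf{Im}(A)$. Testing the orthogonality relation with $\mathbf p=0$ and $u\in C_c^\infty(\Omega)$, and using $\sigma=\sigma^T$, gives $\nabla\cdot(\sigma\mathbf F)=0$ in $\Omega$, so $\sigma\mathbf F$ possesses a normal trace; letting $u$ range over all of $H^1(\Omega)$ then identifies this trace as $-g$ on $\Gamma$ and $0$ on $\Gamma_c$. Testing instead with $u=0$ and $\mathbf p\in C_c^\infty(\Omega)^d$ yields $\mathbf F=\nabla f$, so $f\in H^1(\Omega)$, and letting $\mathbf p$ range over all of $\tilde H_{\mathsf{div}}(\Omega)$ forces $f=h$ on $\Gamma$ and $f=0$ on $\Gamma_c$. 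Combining these, $f$ solves $-\nabla\cdot\sigma\nabla f=0$ with vanishing Cauchy data on $\Gamma_c$, so the same uniqueness result, now applied on $\Gamma_c$ (which also has positive measure), gives $f\equiv 0$, whence $\mathbf F=\nabla f=0$, $g=0$ and $h=0$. The delicate step is the trace bookkeeping: $\sigma\mathbf F$ has only an a priori $H^{-1/2}(\partial\Omega)$ normal trace, and one must argue that the orthogonality identity promotes it to an $L^2$ function on $\Gamma$ and makes it vanish on $\Gamma_c$, exploiting that the normal traces of elements of $\tilde H_{\mathsf{div}}(\Omega)$ separate the two boundary pieces.

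For non-surjectivity I would argue indirectly. Since $A$ is bounded and injective, were it also onto the bounded inverse theorem would make $A^{-1}$ continuous, that is, $A$ would be bounded below; but this is exactly continuous dependence of $(u,\mathbf p)$ on the Cauchy data, contradicting the severe (exponential) ill-posedness of the elliptic Cauchy problem recalled in the introduction. Concretely this amounts to exhibiting a normalized sequence $u_n$ of solutions of $-\nabla\cdot\sigma\nabla u_n=0$ whose Cauchy data on $\Gamma$ tend to $0$ while $\|(u_n,\sigma\nabla u_n)\|_\mathcal X$ stays bounded below: setting $\mathbf p_n=\sigma\nabla u_n$ annihilates the first two components of $A$, so $\|A(u_n,\mathbf p_n)\|_\mathcal Y$ reduces to the Cauchy data on $\Gamma$ and tends to $0$. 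Such Hadamard-type sequences are classical; their existence shows $A$ is not bounded below, hence $\mathsf{Im}(A)$ is not closed, and being dense yet proper, $A$ is not onto. This reliance on the instability of the Cauchy problem is the main obstacle, in the sense that it is the one step where the ill-posedness is really used.

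Finally, for non-compactness it suffices to produce a bounded sequence in $\mathcal X$ whose image is not relatively compact. Fixing a ball $B\subset\subset\Omega$, a cut-off $\chi\in C_c^\infty(B)$ and a unit vector $e'$, I take $\mathbf p_n:=\chi\,e'\,\sin(n\,e'\cdot x)/n$ and $x_n:=(0,\mathbf p_n)$. Then $\mathbf p_n$ has compact support, so $\mathbf p_n\cdot\nu=0$ on $\partial\Omega$; moreover $\|\mathbf p_n\|_{L^2}\to 0$ while $\nabla\cdot\mathbf p_n=\chi\cos(n\,e'\cdot x)+o(1)$ stays bounded in $L^2(\Omega)$, so $x_n$ is bounded in $\mathcal X$. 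The $L^2(\Omega)$-component of $A x_n$ is $-\nabla\cdot\mathbf p_n$, which converges weakly to $0$ but has norm bounded away from $0$, hence admits no $L^2$-convergent subsequence; therefore $\{A x_n\}$ is not relatively compact and $A$ is not compact.
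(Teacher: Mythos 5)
Your proposal is correct, and its centerpiece --- the density argument via $\mathsf{Im}(A)^{\perp} = \{0\}$ --- is essentially identical to the paper's: the same test choices ($u=\varphi\in C_c^\infty(\Omega)$ with $\mathbf p=\mathbf 0$, then $u=0$ with $\mathbf p\in C_c^\infty(\Omega)^d$, then general $u\in H^1(\Omega)$ and general $\mathbf q\in\tilde H_{\mathsf{div}}(\Omega)$), the same identifications $\mathbf F=\nabla f$, $\sigma\nabla f\cdot\nu=-g$ on $\Gamma$, $f=h$ on $\Gamma$, vanishing Cauchy data on $\Gamma_c$, and the same appeal to uniqueness for the Cauchy problem with data on $\Gamma_c$ (the paper carries $\sigma^T$ through instead of invoking the symmetry hypothesis, an immaterial difference); your remark about the $H^{-1/2}$ normal trace is a subtlety the paper glosses over at exactly the same step. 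Injectivity is also handled the same way, by reduction to the homogeneous Cauchy problem and citation of uniqueness.

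You genuinely diverge on the last two items. For non-surjectivity the paper simply cites the known nonexistence results, whereas you argue by the bounded inverse theorem plus Hadamard-type instability sequences. This is logically sound, but the assertion that such sequences are ``classical'' for a general Lipschitz domain, a variable $W^{1,\infty}$ coefficient $\sigma$, and an arbitrary $\Gamma$ of positive measure is the one step that needs either a citation or a construction (e.g.\ normalized singular solutions with poles approaching $\Gamma_c$ from outside $\Omega$); note also that a cheaper, unconditional argument is available in these spaces: the Dirichlet trace of any $u\in H^1(\Omega)$ lies in the restriction to $\Gamma$ of $H^{1/2}(\partial\Omega)$, a proper subspace of $L^2(\Gamma)$, so $(\mathbf 0,0,g_D,0)$ with $g_D\in L^2(\Gamma)$ not of this form is never attained, and no instability is needed. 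For non-compactness the paper takes an orthonormal basis $(e_n)$ of $L^2(\Omega)$ and solves pure Neumann problems $-\nabla\cdot\sigma\nabla u_n=e_n$, $\sigma\nabla u_n\cdot\nu=0$, so that $A(u_n,\sigma\nabla u_n)$ carries the non-convergent sequence $e_n$ in its second component; your explicit oscillating flux $\mathbf p_n=\chi\,e'\sin(n\,e'\cdot x)/n$ with $u=0$ achieves the same conclusion with no PDE solve, and it incidentally sidesteps a small flaw in the paper's construction: the pure Neumann problem requires the compatibility condition $\int_\Omega e_n\,dx=0$, which cannot hold for every element of a Hilbert basis (the paper's argument is fixable by subtracting means, but your sequence avoids the issue entirely). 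In short: same proof where it matters most (density), a more self-contained but slightly under-justified route to non-surjectivity, and an arguably cleaner non-compactness construction.
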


\begin{proof}
Clearly, $A$ is linear continuous. As the data completion problem for Poisson's equation is known to admits at most
a solution, but may have no solution, $A$ is one-to-one but not onto. Let us prove that $\overline{\textsf{Im}(A)}^\mathcal{Y} = \mathcal{Y}$. Let $(\mathbf{F},f,g,h) \in \mathcal{Y}$ such that
$$
\Big( A(u,\mathbf{p}),(\mathbf{F},f,g,h) \Big)_\mathcal{Y} = 0,\quad \forall (u,\mathbf{p}) \in \mathcal{Y},
$$
that is
$$
\int_\Omega \Big( (\sigma \nabla u - \mathbf{p}) \cdot \mathbf{F}  - (\nabla\cdot \mathbf{p})\, f\Big)\, dx + \int_\Gamma \Big( u \, g + (\mathbf{p}\cdot \nu)\, h\Big)\, ds = 0,\quad \forall (u,\mathbf{p}) \in H^1(\Omega) \times \tilde{H}_{\mathsf{div}}(\Omega).
$$
Choosing $u = \varphi \in C^\infty_c(\Omega)$ and $\mathbf{p} = \mathbf{0}$, we
obtain $- \nabla \cdot\sigma^T \mathbf{F} = 0 $, and in particular $\sigma^T \mathbf{F} \in H_{\mathsf{div}}(\Omega)$.
Choosing $u = 0$ and $\mathbf{p} = \mathbf{\Psi} \in C^\infty_c(\Omega)^d$, we obtain $\nabla f = \mathbf{F}$. Therefore,
 $f \in H^1(\Omega)$, and verifies $- \nabla \cdot \sigma^T \nabla f = 0$. Hence, taking $u \in H^1(\Omega)$ and $\mathbf{p} = 0$,
and using the Green formula, we obtain
$$
\langle \sigma^T \nabla f \cdot \nu, u\rangle = \int_\Omega \sigma^T \nabla f \cdot \nabla u  \, dx = \int_\Omega \sigma \nabla u \cdot
\nabla f \, dx = - \int_\Gamma u\, g \, ds,\quad \forall u \in H^1(\Omega),
$$
implying that $\sigma^T \nabla f \cdot \nu = - g$ on $\Gamma$ and $0$ on $\Gamma_c$. Taking $u =0$ and $\mathbf{p} \in \tilde{H}_{\mathsf{div}}(\Omega)$, and using the divergence theorem, we obtain
$$
\langle \mathbf{p}\cdot \nu , f \rangle = \int_\Omega \Big( \mathbf{p} \cdot \nabla f + \nabla\cdot \mathbf{p} \, f\Big)\, dx
= \int_\Gamma (\mathbf{p}\cdot\nu )\, h \, ds,
$$
and therefore $f = h$ on $\Gamma$ and $0$ on $\Gamma_c$. We have obtain that $f \in H^1(\Omega)$ verifies $-\nabla \cdot \sigma^T \nabla f = 0$
in $\Omega$ and $f = \sigma^T \nabla f \cdot \nu = 0$ on $\Gamma_c$: by uniqueness of the solution of the elliptic data completion problem,
necessarily $f = 0$, which implies directly $\mathbf{F} = \mathbf{0}$ and $g = h = 0$.

Finally, let us prove that $A$ is not a compact operator. Consider $e_n$  an Hilbert basis of $L^2(\Omega)$, and $u_n$ 
in $H^1(\Omega)$ verifying $\displaystyle \int_\Omega u_n\, dx = 0$, $ -\nabla \cdot \sigma \nabla u_n = e_n$ in $\Omega$ and $\sigma \nabla u_n \cdot \nu = 0$ on $\partial \Omega$. It is not difficult to show that $u_n$ exists and is unique. Furthermore, $\Vert u_n \Vert_{H^1(\Omega)} \leq C(\Omega, \sigma)$, and in particular $\Vert u_n \Vert_{L^2(\Gamma)} \leq C(\Omega,\sigma)$. Defining $\mathbf{p}_n := \sigma \nabla u_n \in \tilde{H}_\mathsf{div}(\Omega)$, we obtain $\Vert \mathbf{p}_n \Vert_{\tilde{H}_\mathsf{div}} \leq C(\Omega,\sigma)$, hence
 $(u_n,\mathbf{p}_n)$ is a bounded sequence in $\mathcal{X}$. But $A(u_n,\mathbf{p_n}) = (\mathbf{0},e_n,u_{n\, \vert \Gamma},0)$ does not admit any convergent
 subsequence.
\end{proof}

To define our quasi-reversibility approach to this data completion problem, we  choose  
$b(.,.)$  such that the corresponding norm $\Vert . \Vert_{A,b}$ is equivalent
to the norm $\Vert . \Vert_\mathcal{X}$. Of course, we could choose the whole $\mathcal{X}$-scalar product. But it might be interesting 
to use another form, to soften the regularization: here we define 
$$
b\Big( (u,\mathbf{p}) , (v,\mathbf{q} ) \Big) := \int_\Omega ( \nabla u \cdot \nabla v + \mathbf{p}\cdot \mathbf{q})\, dx
$$
which  is a symmetric bilinear non-negative form in $\mathcal{X}$ (but obviously not a scalar product). Using Poincar\'e inequality, it is easy to obtain the existence of $c,C>0$ such that
$$
c \Vert v,\mathbf{q} \Vert_\mathcal{X} \leq \Vert v,\mathbf{q} \Vert_{A,b} \leq C \Vert v,\mathbf{q} \Vert_X.
$$
As for any $(v,\mathbf{q}) \in \mathcal{X}$, $\Vert v,\mathbf{q} \Vert_{b} \leq \Vert v,\mathbf{q} \Vert_\mathcal{X}$, for a fixed $\ep$
the regularization term in the quasi-reversibility method is smaller.

Applying the abstract setting to this problem, we obtain the following quasi-reversibility regularization: for $\ep>0$, find 
$(u_\ep, \mathbf{p}_\ep) \in H^1(\Omega) \times \tilde{H}_{\mathsf{div}(\Omega)}$ such that for all $(v,\mathbf{q}) \in H^1(\Omega)
\times \tilde{H}_{\mathsf{div}}(\Omega)$, 
$$
\int_\Omega (\sigma \nabla u_\ep - \mathbf{p}_\ep) \cdot (\sigma \nabla v - \mathbf{q})\, dx
+ \int_\Omega (\nabla \cdot \mathbf{p}_\ep)\, (\nabla \cdot \mathbf{q})\, dx +
\int_\Gamma \Big( u_\ep\, v + (\mathbf{p}_\ep \cdot \nu) \, (\mathbf{q}\cdot \nu) \Big)\, ds
$$
$$
+ \ep\int_\Omega ( \nabla u_\ep \cdot \nabla v + \mathbf{p}_\ep\cdot \mathbf{q})\, dx =
 -\int_\Omega f\, (\nabla \cdot \mathbf{q}) dx  + \int_\Gamma 
 \Big( g_D\, v + g_N\, (\mathbf{q}\cdot \nu) \Big)\, ds.
$$
This problem always admits an unique solution $(u_\ep, \mathbf{p}_\ep)$. We know from our study that if the data completion problem
for the Poisson's equation admits a solution $(u_s,\mathbf{p}_s)$, then $(u_\ep,\mathbf{p}_\ep)$ converges
monotonically  to $(u_s,\mathbf{p}_s)$ as $\ep$ goes to zero, with the estimate
$$
\sqrt{\Vert \sigma \nabla u_\ep - p_\ep \Vert_{L^2(\Omega)^d}^2 + \Vert \nabla \cdot p_\ep - f \Vert_{L^2(\Omega)}^2 + 
\Vert u_\ep - g_D \Vert_{L^2(\Gamma)}^2 + \Vert \mathbf{p}_\ep \cdot \nu - g_N \Vert_{L^2(\Gamma)}^2 }$$
$$
\leq \sqrt{\ep} \Vert u_s,\mathbf{p}_s \Vert_b.
$$
If not, we know that $\Vert (u_\ep, \mathbf{p}_\ep ) \Vert_b \xrightarrow[\ep \rightarrow 0]{} +\infty$.

The quasi-reversibility method we obtain in this study is close to the one proposed in \cite{Dar2} to stabilize the data completion
problem, which was:  find 
$(u_\ep, \mathbf{p}_\ep) \in H^1(\Omega) \times \tilde{H}_{\mathsf{div}(\Omega)}$, $u_\ep = g_D$
and $\mathbf{p}_\ep\cdot \nu = g_N$ on $\Gamma$, such that for all $(v,\mathbf{q}) \in H^1(\Omega)
\times \tilde{H}_{\mathsf{div}}(\Omega)$, $v = \mathbf{q}\cdot \nu = 0$ on $\Gamma$,
\begin{align*}
\int_\Omega (\sigma \nabla u_\ep - \mathbf{p}_\ep) \cdot & (\sigma \nabla v - \mathbf{q})\, dx
 + \int_\Omega (\nabla \cdot \mathbf{p}_\ep)\, (\nabla \cdot \mathbf{q})\, dx \\
& + \ep\ \Big( (u_\ep,\mathbf{p}_\ep), (v,\mathbf{q}) \Big)_\mathcal{X} =
 -\int_\Omega f\, (\nabla \cdot \mathbf{q}) dx.
\end{align*}
The two differences are first the use of $b(.,.)$ instead of $(.,.)_\mathcal{X}$ in the regularization term,
and secondly in the way the boundary condition are included in the problem. In the formulation proposed 
in \cite{Dar2}, they are strongly imposed, which presents two main issues: one is theoretical, as the regularized problem
might not have solution if $g_D$ is in $L^2(\Gamma)$, and not in $H^{1/2}(\Gamma)$, as in that case
there is no $v \in H^1(\Omega)$ such that $v = g_D$ on $\Gamma$, and therefore $u_\ep$ cannot exist.
The second one is practical: it is not a good idea to strongly impose data which might extremely noisy, as
in that case the noise is somehow imposed to the solution.
In the quasi-reversibility regularization obtain in the present paper, the boundary conditions are weakly imposed, which solves
both of the problems: the regularized problem always admits a solution, even in the case where $g_D$ is not the trace on $\Gamma$
of a $H^1$ function, and the noise is regularized directly by the formulation, leading to a stabler formulation.

Finally, the abstract iterated quasi-reversibility method applied to the elliptic data completion problem is: for $\ep> 0$, $(u_\ep^{-1}, \mathbf{p}_\ep^{-1}) = (0,\mathbf{0})$ and for all $M \in \mathbb{N}$, $(u_\ep^M,\mathbf{^p}_\ep^M) \in H^1(\Omega) \times \tilde{H}_{\mathsf{div}}(\Omega)$ verifies
$$
\int_\Omega (\sigma \nabla u_\ep^M - \mathbf{p}_\ep^M) \cdot (\sigma \nabla v - \mathbf{q})\, dx
+ \int_\Omega (\nabla \cdot \mathbf{p}_\ep^M)\, (\nabla \cdot \mathbf{q})\, dx 
$$
$$
+
\int_\Gamma \Big( u_\ep^M\, v + (\mathbf{p}_\ep^M \cdot \nu) \, (\mathbf{q}\cdot \nu) \Big)\, ds
+ \ep\, b\Big( (u_\ep^M,\mathbf{p}_\ep^M), (v,\mathbf{q}) \Big) =$$
$$
 -\int_\Omega f\, (\nabla \cdot \mathbf{q}) dx  + \int_\Gamma 
 \Big( g_D\, v + g_N\, (\mathbf{q}\cdot \nu) \Big)\, ds
 + \ep\, b\Big( (U_\ep^{M-1},\mathbf{P}_\ep^{M-1}), (v,\mathbf{q}) \Big).
$$
and we directly know that $u_\ep^M$ and $\mathbf{p}_\ep^M$ converge to  $u$ and $\sigma \nabla u$ when $M$ goes to infinity. 
In the case where noisy data $f^\delta$, $g_D^\delta$ and $g_N^\delta$ are available, such that 
$$
\Vert g_D - g_D^\delta \Vert_{L^2(\Gamma)}^2 + \Vert g_N - g_N^\delta \Vert_{L^2(\Gamma)}^2 +\Vert f- f^\delta \Vert_{L^2(\Omega)}^2
\leq \delta^2,
$$
in accordance with the result of section \ref{section_iterated_noisy_data}, 
we stop the iterations the first time that $\Vert A (u_\ep^M, \mathbf{p}_\ep^M) - (\mathbf{0},f^\delta, g_D^\delta , g_N^\delta) \Vert_\mathcal{Y} \leq r\delta $, with $r>1$ close to 1.

\begin{remark}
Actually, in the following numerical results, we use $r=1$.
\end{remark}  

\subsection{Heat equation}

As for the Poisson problem, we modify the data completion problem defined in the introduction, introducing the flux $p := \nabla u$
as an additional unknown:

\begin{problem}
find $(u,\mathbf{p})$ in $L^2(0,T;H^1(\Omega)) \cap H^1(0,T; L^2(\Omega)) \times L^2(0,T; \tilde{H}_{\mathsf{div}}(\Omega))$
such that
$$
\left\lbrace
\begin{array}{rcl}
\partial_t u - \nabla \cdot \mathbf{p} &=& f \text{ in } \mathcal{Q} \\
\nabla u & = & \mathbf{p} \text{ in } \mathcal{Q} \\
u & = & g_D \text{ on } (0,T) \times \Gamma \\
\mathbf{p} \cdot \nu & = & g_N  \text{ on } (0,T) \times \Gamma
\end{array}
\right.
$$
\end{problem}

Again, we define 
\begin{align*}
A : &  (u,\mathbf{p}) \in\mathcal{X} := L^2(0,T;H^1(\Omega)) \cap H^1(0,T; L^2(\Omega)) \times L^2(0,T; \tilde{H}_{\mathsf{div}}(\Omega)) \\
  & \mapsto (\nabla u - \mathbf{p}, \partial_t u -\nabla \cdot \mathbf{p}, u_{\vert \Gamma}, \mathbf{p}\cdot \nu_{\vert \Gamma})  \in
\mathcal{Y}=L^2(0,T; L^2(\Omega)^d) \times L^2(0,T;L^2(\Omega)) \\ & \times L^2(0,T;L^2(\Gamma)) \times L^2(0,T;L^2(\Gamma)).
\end{align*}

Here, the spaces $\mathcal{X}$ and $\mathcal{Y}$ are endowed with their natural scalar products, respectively
\begin{align*}
 \Big( (u,\mathbf{p}) ,  (v,\mathbf{q})  \Big)_\mathcal{X} :=   \int_0^T \int_\Omega 
 \Big( \partial_t u \, \partial_t v + \nabla u \cdot \nabla v + u\, v + & (\nabla \cdot \mathbf{p}) \, (\nabla \cdot \mathbf{q}) + \mathbf{p}
 \cdot \mathbf{q} \Big) dx\, dt\\
+ &\int_0^T \int_\Gamma (\mathbf{p}\cdot \nu ) \, (\mathbf{q}\cdot \nu) \, dS \, dt
\end{align*}
and 
\begin{align*}
\Big( (\mathbf{F}_1 ,f_1 , g_1, h_1),&  (\mathbf{F}_2 ,f_2 , g_2, h_2)   \Big)_\mathcal{Y}  :=
\int_0^T \int_\Omega \Big( \mathbf{F}_1 \cdot \mathbf{F}_2  + f_1\, f_2 \Big) dx\, dt \\
 & + \int_0^T \int_\Gamma \Big( g_1 \, g_2  + h_1\, h_2 \Big) dS\, dt.
\end{align*}
It is then not difficult to verify the 

\begin{proposition}
$A$ is a linear continuous. It is one-to-one but not onto, and has dense image. Furthermore, it is not a compact operator.
\end{proposition}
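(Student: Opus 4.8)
The plan is to mirror the proof of the elliptic proposition, since the parabolic operator has the same algebraic structure; the extra time variable only produces boundary contributions at $t=0$ and $t=T$ and replaces the elliptic operator by the adjoint (time-reversed) heat operator. Linearity and continuity of $A$ are immediate, as $A$ is assembled from $\nabla$, $\partial_t$, $\nabla\cdot$ and the trace maps, all continuous on $\mathcal{X}$. For injectivity, if $A(u,\mathbf{p})=0$ then $\mathbf{p}=\nabla u$ and $u$ solves $\partial_t u-\Delta u=0$ in $\mathcal{Q}$ with $u=0$ and $\nabla u\cdot\nu=0$ on $(0,T)\times\Gamma$; the uniqueness for the parabolic data completion problem (see \cite{Puzy}) forces $u=0$, hence $\mathbf{p}=0$. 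Non-surjectivity is precisely the ill-posedness recalled in the introduction: the data completion problem has no solution for generic data.

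For the density of the range I would argue by duality, exactly as in the elliptic case. Take $(\mathbf{F},f,g,h)\in\mathcal{Y}$ orthogonal to $\mathsf{Im}(A)$, so that
\[
\int_0^T\!\!\int_\Omega\Big((\nabla u-\mathbf{p})\cdot\mathbf{F}+(\partial_t u-\nabla\cdot\mathbf{p})\,f\Big)dx\,dt+\int_0^T\!\!\int_\Gamma\Big(u\,g+(\mathbf{p}\cdot\nu)\,h\Big)dS\,dt=0
\]
for all $(u,\mathbf{p})\in\mathcal{X}$. Testing with $u\in C_c^\infty(\mathcal{Q})$, $\mathbf{p}=0$ gives $\partial_t f+\nabla\cdot\mathbf{F}=0$ distributionally, and testing with $u=0$, $\mathbf{p}\in C_c^\infty(\mathcal{Q})^d$ gives $\mathbf{F}=\nabla f$; together these show that $f$ solves the backward heat equation $\partial_t f+\Delta f=0$. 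I would then revisit the full identity for general $(u,\mathbf{p})$, integrating by parts in both space and time. The volume integrals collapse using $\partial_t f+\Delta f=0$, leaving only lateral boundary terms on $(0,T)\times\partial\Omega$ and the time-endpoint terms $\int_\Omega\big(u(T)f(T)-u(0)f(0)\big)dx$. Varying the normal trace $\mathbf{p}\cdot\nu$ freely on $\Gamma$ and $\Gamma_c$ yields $f=h$ on $\Gamma$ and $f=0$ on $\Gamma_c$; varying the endpoint values of $u$ yields $f(0)=f(T)=0$; and varying the trace of $u$ yields $\nabla f\cdot\nu=-g$ on $\Gamma$ and $\nabla f\cdot\nu=0$ on $\Gamma_c$. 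Thus $f$ has vanishing Cauchy data on $(0,T)\times\Gamma_c$. Applying the time reversal $t\mapsto T-t$ turns $f$ into a solution of the forward heat equation with zero lateral Cauchy data on $\Gamma_c$, and the parabolic uniqueness result forces $f\equiv0$, whence $\mathbf{F}=\nabla f=0$ and $g=h=0$. This establishes $\overline{\mathsf{Im}(A)}^\mathcal{Y}=\mathcal{Y}$.

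Finally, to see that $A$ is not compact I would reproduce the elliptic construction. Let $(e_n)$ be a Hilbert basis of $L^2(\mathcal{Q})$ and let $u_n$ solve $\partial_t u_n-\Delta u_n=e_n$ in $\mathcal{Q}$ with zero initial condition and homogeneous Neumann condition on $\partial\Omega$. Standard parabolic energy estimates give $\|u_n\|_{L^2(0,T;H^1)}+\|\partial_t u_n\|_{L^2(0,T;L^2)}\leq C$; setting $\mathbf{p}_n:=\nabla u_n$, one has $\nabla\cdot\mathbf{p}_n=\partial_t u_n-e_n\in L^2(\mathcal{Q})$ and $\mathbf{p}_n\cdot\nu=0$, so $(u_n,\mathbf{p}_n)$ is bounded in $\mathcal{X}$. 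Since $A(u_n,\mathbf{p}_n)=(\mathbf{0},e_n,u_{n\,\vert\Gamma},0)$ and the $e_n$ are orthonormal, $\|A(u_n,\mathbf{p}_n)-A(u_m,\mathbf{p}_m)\|_\mathcal{Y}\geq\|e_n-e_m\|_{L^2(\mathcal{Q})}=\sqrt2$ for $n\neq m$, so the image sequence has no convergent subsequence and $A$ is not compact.

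The delicate step is the density argument, specifically the bookkeeping of the time-endpoint terms together with the lateral boundary terms, and the justification that the traces $u_{\vert\partial\Omega}$, $\mathbf{p}\cdot\nu$ and the endpoint values $u(0),u(T)$ may be prescribed independently so as to separate the conditions on $f$. Once the zero Cauchy data on $\Gamma_c$ is in hand, invoking the time-reversed parabolic uniqueness is routine.
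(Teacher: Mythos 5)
Your proof is correct and follows essentially the same route as the paper's: the same duality argument for density (test with compactly supported $\mathbf{p}$ to get $\mathbf{F}=\nabla f$, derive the backward heat equation $\partial_t f+\Delta f=0$, recover the lateral traces via the space-time divergence theorem, and conclude $f\equiv 0$ by parabolic uniqueness from the vanishing Cauchy data on $(0,T)\times\Gamma_c$), with injectivity and non-surjectivity read off from the well-known uniqueness and ill-posedness of the parabolic data completion problem. The only divergences are cosmetic: the paper sidesteps the time-endpoint terms by restricting to test functions with $v(0,\cdot)=v(T,\cdot)=0$ rather than deriving $f(0)=f(T)=0$ as you do (information you correctly never use, since the lateral Cauchy data on $\Gamma_c$ already forces $f\equiv 0$ without endpoint conditions), and it merely asserts non-compactness (``it is not difficult to be convinced''), whereas you spell out the parabolic analogue of the elliptic basis construction with the requisite energy estimates --- a valid and welcome addition.
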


\begin{proof}
We will just prove prove that $A$ has dense range, as it is not difficult to be convinced that $A$ is non compact, and the rest of the proposition follows directly  from the definition of $A$, $\mathcal{X}$ and $\mathcal{Y}$, and the ill-posedness of the corresponding data completion problem.  
%Let us start with the density of the range of $A$ in $\mathcal{Y}$.

Let $\mathbf{F} \in L^2(0,T; L^2(\Omega)^d )$, $f \in L^2(0,T;L^2(\Omega))$, $g\in L^2(0,T;L^2(\Gamma))$ and $h \in L^2(0,T;L^2(\Gamma))$
be such that for all $v \in L^2(0,T;H^1(\Omega)) \cap H^1(0,T;L^2(\Omega))$ and all $\mathbf{q} \in L^2(0,T;\tilde{H}_\textsf{div}(\Omega))$,
$\Big(A(v,\mathbf{q}) , (\mathbf{F},f,g,h) \Big)_\mathcal{Y} =0$, that is
$$
\int_0^T \int_\Omega \Big( (\nabla v - \mathbf{q}) \cdot \mathbf{F} + (\partial_t v - \nabla \cdot \mathbf{q})\, f\Big)  dx
+ \int_0^T \int_\Gamma \Big( v\, g + (\mathbf{q}\cdot \nu) h \Big) ds = 0.
$$
First of all, choosing $\mathbf{q} = \Upsilon \in C^\infty_c(\Omega)^d$, we obtain $\mathbf{F} = \nabla f$, and therefore
$f \in L^2(0,T;H^1(\Omega))$. So we have
\begin{equation} \label{eqn_density_A}
\int_0^T \int_\Omega \Big( (\nabla v - \mathbf{q}) \cdot \nabla f+ (\partial_t v - \nabla \cdot \mathbf{q})\, f\Big)  dx
+ \int_0^T \int_\Gamma \Big( v\, g + (\mathbf{q}\cdot \nu) h \Big) ds = 0.
\end{equation}
For all $\mathbf{q} \in L^2(0,T;\tilde{H}_\textsf{div}(\Omega))$, for almost all $t \in (0,T)$, we have
$$
\int_\Omega \Big( \mathbf{q} \cdot \nabla f +(\nabla \cdot \mathbf{q}) f \Big) dx = \langle\mathbf{q}\cdot \nu, f \rangle_{H^{-1/2}(\partial\Omega),H^{1/2}(\partial \Omega)}
$$
which leads by integration in time to
$$
\int_0^T \langle\mathbf{q}\cdot \nu, f \rangle_{H^{-1/2}(\partial\Omega),H^{1/2}(\partial \Omega)} dt = 
- \int_0^T \int_\Gamma (\mathbf{q}\cdot \nu) h ds\, dt
$$
that is $f = -h$ on $(0,T)\times \Gamma$ and $f = 0$ on $(0,T)\times \Gamma_c$.

Now, taking $v= \varphi \in C^\infty_c(\Omega)$ in (\ref{eqn_density_A}) leads to $\partial_t f + \Delta f = 0$ in $(0,T)\times \Omega$.  We see that $\mathbf{V} := (f,\nabla f) \in H_{\mathsf{div}}(\mathcal{Q})$, and we can apply the divergence theorem:
for all $v \in H^{1,1}(\mathcal{Q}) = L^2(0,T;H^1(\Omega)) \cap H^1(0,T;L^2(\Omega))$, we have
$$
\int_\mathcal{Q}  \Big( (\nabla_{t,x} \cdot \mathbf{V} ) v + \mathbf{V} \cdot \nabla_{t,x} v \Big) dx\,dt
 = \langle \mathbf{V}\cdot \nu, v \rangle_{H^{-1/2}(\partial \mathcal{Q}) , H^{1/2}(\partial \mathcal{Q})}.
$$
that is, taking any $v \in H^{1,1}(\mathcal{Q}) $  such that $v(0,x) = v(T,x) = 0$,
$$
\int_0^T \int_\Omega \Big( \underset{=0}{\underbrace{(\partial_t f + \Delta f)}} v + f\, \partial_t v + \nabla f \cdot \nabla v \Big) dx dt 
= \langle \langle \nabla f\cdot \nu, v \rangle_{H^{-1/2}(\partial \Omega), H^{1/2}(\partial \Omega)} \rangle_{H^{-1/2}(0,T), H^{1/2}(0,T)}
$$
leading to $\nabla f \cdot \nu = -g$ on $(0,T)\times\Gamma$ and $\nabla f \cdot \nu = 0$ on $(0,T)\times\Gamma_c$.
Therefore, $f$ verifies $\partial_t f + \Delta f =0$ in $(0,T) \times \Omega$, $f = \nabla f \cdot \nu = 0$ on $(0,T)\times \Gamma_c$,
hence $f \equiv 0$ in $(0,T) \times \Gamma$, leading to $\mathbf{F} = \mathbf{0}$ and $g = h = 0$.
\end{proof}

Similarly as for the previous regularization, we introduce the symmetric bilinear non-negative form
$$
b\Big( (u,\mathbf{p}) , (v,\mathbf{q}) \Big) := \int_{0}^T \int_\Omega \big( \partial_t u \, \partial_t v + \nabla u \cdot \nabla v + \mathbf{p} \cdot \mathbf{q} \Big) dx\, dt.
$$
It is easy to check that the bilinear form $(A (u,\mathbf{p}), A (v,\mathbf{q}) )_\mathcal{Y} + b( (u,\mathbf{p}), (v,\mathbf{q})) $
is a scalar product on $\mathcal{X}$, and that it exists two constants $c, C>0$ such that
$$
c \Vert u,\mathbf{p} \Vert_\mathcal{X} \leq \Vert  u,\mathbf{p}  \Vert_{A,b} \leq C \Vert  u,\mathbf{p}  \Vert_\mathcal{X}.
$$
The quasi-reversibility regularization we consider is therefore: for $\ep>0$, find $(u_\ep,\mathbf{p}_\ep) \in \mathcal{X}$
such that for all $(v,\mathbf{q}) \in \mathcal{X}$, we have
$$
\int_0^T \int_\Omega \Big( (\partial_t u_\ep - \nabla \cdot \mathbf{p}_\ep) (\partial_t v  - \nabla \cdot \mathbf{q}) +
(\nabla u_\ep - \mathbf{p}_\ep) \cdot (\nabla v - \mathbf{q}) \Big) dx\, dt 
$$
$$
+ \int_0^T \int_\Gamma \Big( u_\ep \, v + (\mathbf{p}_\ep \cdot \nu) (\mathbf{q}\cdot \nu) \Big) dS \, dt 
+ \ep  \int_{0}^T \int_\Omega \big( \partial_t u_\ep \, \partial_t v + \nabla u_\ep \cdot \nabla v + \mathbf{p}_\ep \cdot \mathbf{q} \Big) dx\, dt
$$
$$
= \int_0^T \int_\Omega f\,(\partial_t v  - \nabla \cdot \mathbf{q})\, dx\, dt  +  \int_0^T \int_\Gamma \Big(g_D \, v + g_N (\mathbf{q}\cdot \nu) \Big) dS \, dt.
$$
According to our present study,
this problem always admits a unique solution $(u_\ep,\mathbf{p}_\ep) $ that converges to $(u, \nabla u)$ when $\ep$ goes to zero.
The corresponding iterated quasi-reversibility is: $(u_\ep^{-1},\mathbf{p}_\ep^{-1}) = (0,\mathbf{0}) $ and for all 
$M \in \mathbb{N}$, $(u_\ep^{M},\mathbf{p}_\ep^M)$ is such that for all $(v,\mathbf{q})$,
$$
\int_0^T \int_\Omega \Big( (\partial_t u_\ep^M - \nabla \cdot \mathbf{p}_\ep^M) (\partial_t v  - \nabla \cdot \mathbf{q}) +
(\nabla u_\ep^M - \mathbf{p}_\ep^M) \cdot (\nabla v - \mathbf{q}) \Big) dx\, dt 
$$
$$
+ \int_0^T \int_\Gamma \Big( u_\ep^M \, v + (\mathbf{p}_\ep^M \cdot \nu) (\mathbf{q}\cdot \nu) \Big) dS \, dt 
+ \ep  \int_{0}^T \int_\Omega \big( \partial_t u_\ep^M \, \partial_t v + \nabla u_\ep^M \cdot \nabla v + \mathbf{p}_\ep^M \cdot \mathbf{q} \Big) dx\, dt
$$
$$
= \int_0^T \int_\Omega f\,(\partial_t v  - \nabla \cdot \mathbf{q})\, dx\, dt  +  \int_0^T \int_\Gamma \Big(g_D \, v + g_N (\mathbf{q}\cdot \nu) \Big) dS \, dt
$$
$$
+ \ep  \int_{0}^T \int_\Omega \big( \partial_t u_\ep^{M-1} \, \partial_t v + \nabla u_\ep^{M-1} \cdot \nabla v + \mathbf{p}_\ep^{M-1} \cdot \mathbf{q} \Big) dx\, dt.
$$

\section{Numerical results}

\subsection{Elliptic equation} \label{sect_num_ell}

We consider a domain $\Omega \subset \mathbb{R}^2$, with exterior boundary $\Gamma$ and interior boundary $\Gamma_c$ defined by $$\partial \Gamma := 
 \left\lbrace  r(t) \begin{bmatrix}
 \cos(t) \\ \sin(t)
 \end{bmatrix}\ t \in [0,2\,\pi]\right\rbrace \quad \partial \Gamma_c := 
 \left\lbrace  r_{c}(t) \begin{bmatrix}
 \cos(t) \\ \sin(t)
 \end{bmatrix}\ t \in [0,2\,\pi]\right\rbrace,$$
 with
 $$
 r(t) = 1 + 0.1\cos(2\,t) - 0.05\sin(3\,t),\quad r_{c}(t) = 0.5 -0.02\cos(t) + 0.1\sin(t).
 $$
We consider the problem of reconstructing a Robin coefficient  $\eta$ on $\partial \Gamma_c$ from the knowledge of 
a  noisy Cauchy data $(g_D^\delta,g_N^\delta)$ on $\partial \Gamma$. Mathematically, we want to reconstruct a function
$u \in H^1(\Omega)$ and a function $\eta \in C^2(\Gamma_c)$ such that 
$$
\left\lbrace 
\begin{array}{rcll}
-\Delta u &  = & 0 & \text{ in } \Omega \\
u & = & g_D^\delta & \text{ on }  \Gamma \\
\partial_\nu u & = & g_N^\delta & \text{ on } \Gamma \\
\partial_\nu u + \eta\, u & = & 0  & \text{ on } \Gamma_c
\end{array}
\right.
$$
The Cauchy data $(g_D^\delta,g_N^\delta)\in L^2(\Gamma) \times L^2(\Gamma)$ is supposed to correspond to an exact data $(g_D,g_N)$ corrupted by some noise of amplitude $\delta$ : 
$$\Vert g_D^\delta - g_D \Vert_{L^2(\Gamma)}^2 + \Vert g_N^\delta - g_N \Vert_{L^2(\Gamma)}^2 \leq \delta^2.$$
Our strategy to reconstruct $\eta$ is therefore to compute $u_\ep^{M(\delta)}$ and $\mathbf{p}_\ep^{M(\delta)}$, approximations of $u$ and $\nabla u$ with the prescribed noisy Cauchy data
on $\Gamma$ and no data at all on $\Gamma_c$, using the iterated quasi-reversibility method for the Poisson problem. Then, we obtain
an approximation $\eta_\ep$ of $\eta$ on $\Gamma_c$ by simply taking the ratio $\displaystyle \eta_\ep =  - \frac{\mathbf{p}_\ep^{m(\delta)} \cdot \nu}{u_\ep^{M(\delta)}}$.

In our experiments, $\eta = 0.5+0.3\, \sin(2\,(\theta-5\pi/4))$, $\theta$ being the polar angle of a point $\mathbf{x}$ on $\Gamma_c$, and
$g_N = 1$. The corresponding Dirichlet data is obtained by solving the direct problem $-\Delta u = 0$ in $\Omega$,
$\partial_\nu u = 0.2$ on $\Gamma$ and $\partial_\nu u + \eta u  =  0  $ on $ \Gamma_c$ using a finite-element method, and defining 
$g_D := u_{\vert \Gamma}$. 

\begin{figure}[htb]

\includegraphics[width=0.49\textwidth]{./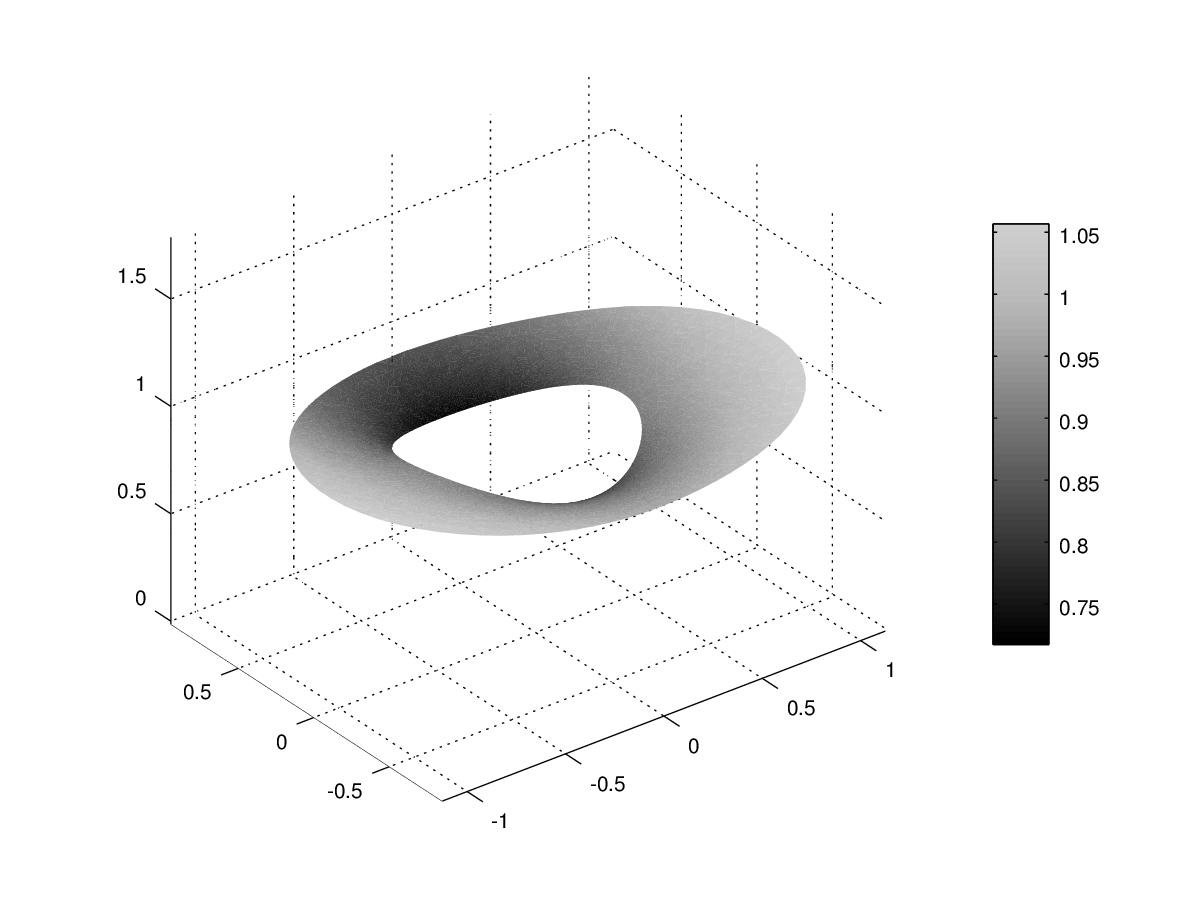}
\includegraphics[width=0.49\textwidth]{./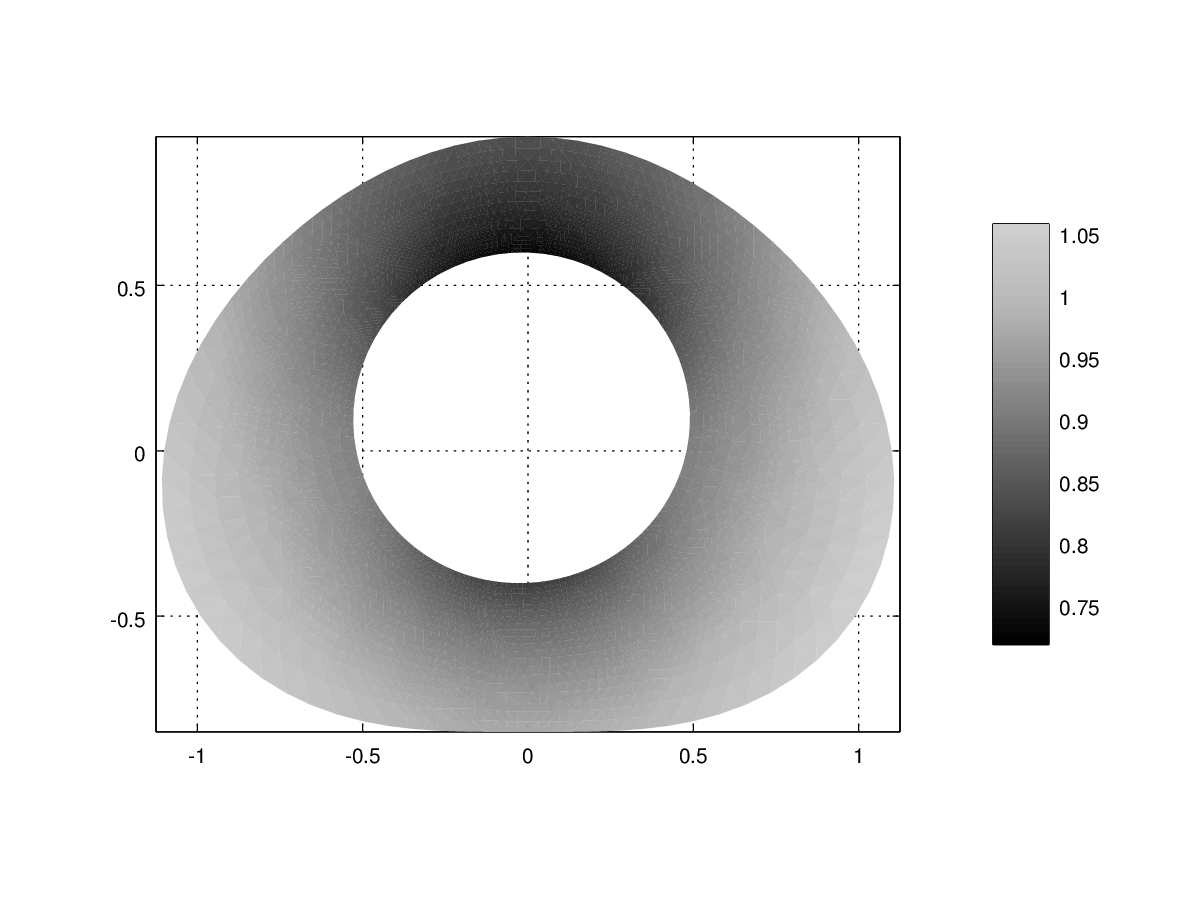}

\caption{Exact solution.}
\end{figure}

Then we corrupt the Dirichlet data $g_D$ pointwise with a normal noise having zero mean and variance one, to obtained the corrupted Dirichlet data
$g_D^\delta$. The noise is scaled so that
$$
\Vert g_D^\delta - g_D \Vert_\infty = \alpha \Vert g_D \Vert_\infty
$$
that is the relative amplitude of noise in $L^\infty$-norm is $\alpha$. 
In the experiments, we have chosen $\alpha = 1\%$, $2\%$ and $5\%$.
The exact Neumann data is used (\textit{i.e.} $g_N^\delta = g_N$), as
in practical situations it is the imposed data (the net current), whereas the $g_D$ is the measured data (the corresponding voltages). Therefore
$g_N$ is known quite precisely compared to $g_D$. We then compute the corresponding amplitude of noise for the $L^2$ norm
$\delta = \delta(\alpha, \Vert g_D \Vert_\infty)$, which defined our stopping criterion for the iteration of the method.

The iterated quasi-reversibility problem is then solved using a conforming finite-element method using $P_2$ Lagrange finite elements
for $u_\ep^M$ and $RT_1$ Raviart-Thomas finite elements for $\mathbf{p}_\ep^M$ \cite{Ciar}. 
The study of convergence of the finite-element approximation of the quasi-reversibility approximation toward the continuous solution is
just a slight adaptation of section 4.4 in \cite{Dar2}, as the formulations are quite similar, and therefore is omitted in the present study.
To avoid an inverse crime, the direct and inverse problems are solved on different meshes.

\begin{figure}[htb] 
\includegraphics[width=0.49\textwidth]{./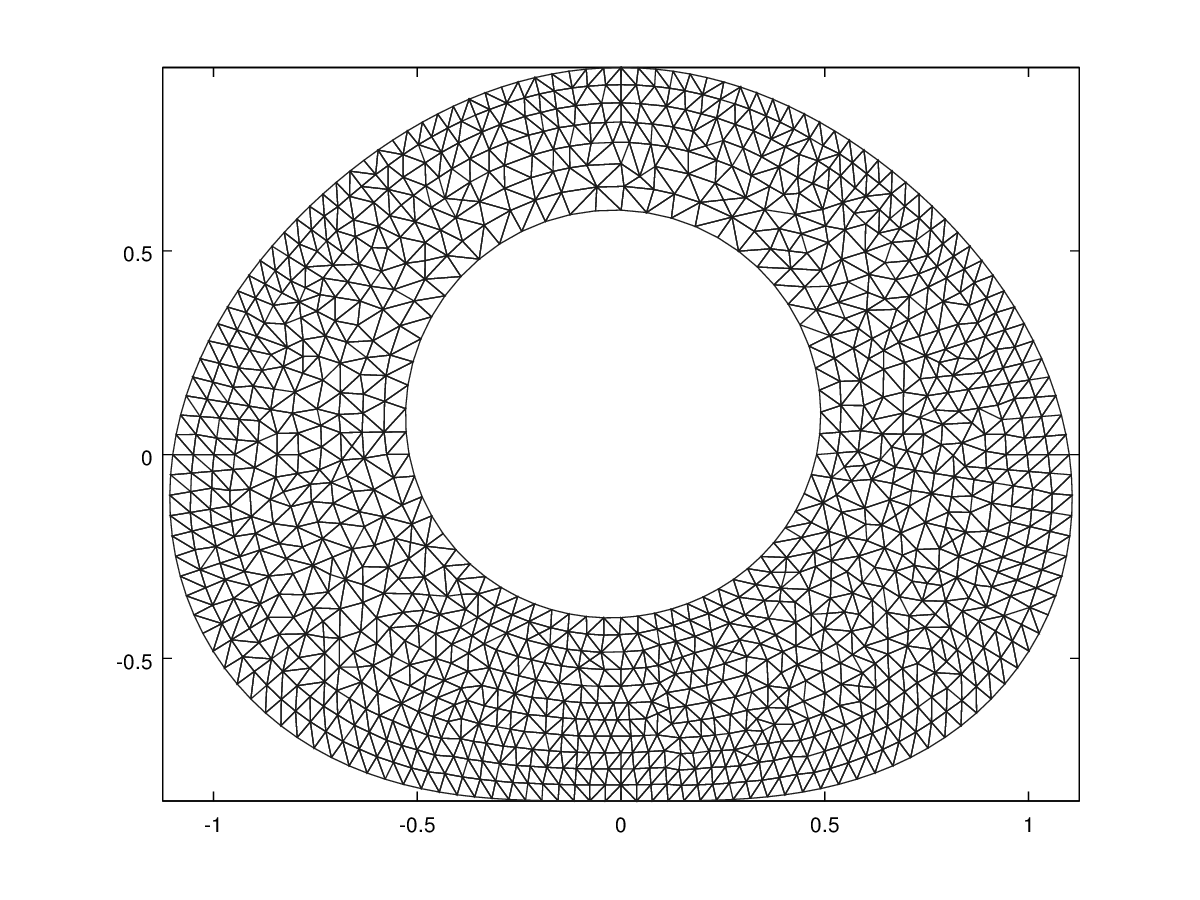}
\includegraphics[width=0.49\textwidth]{./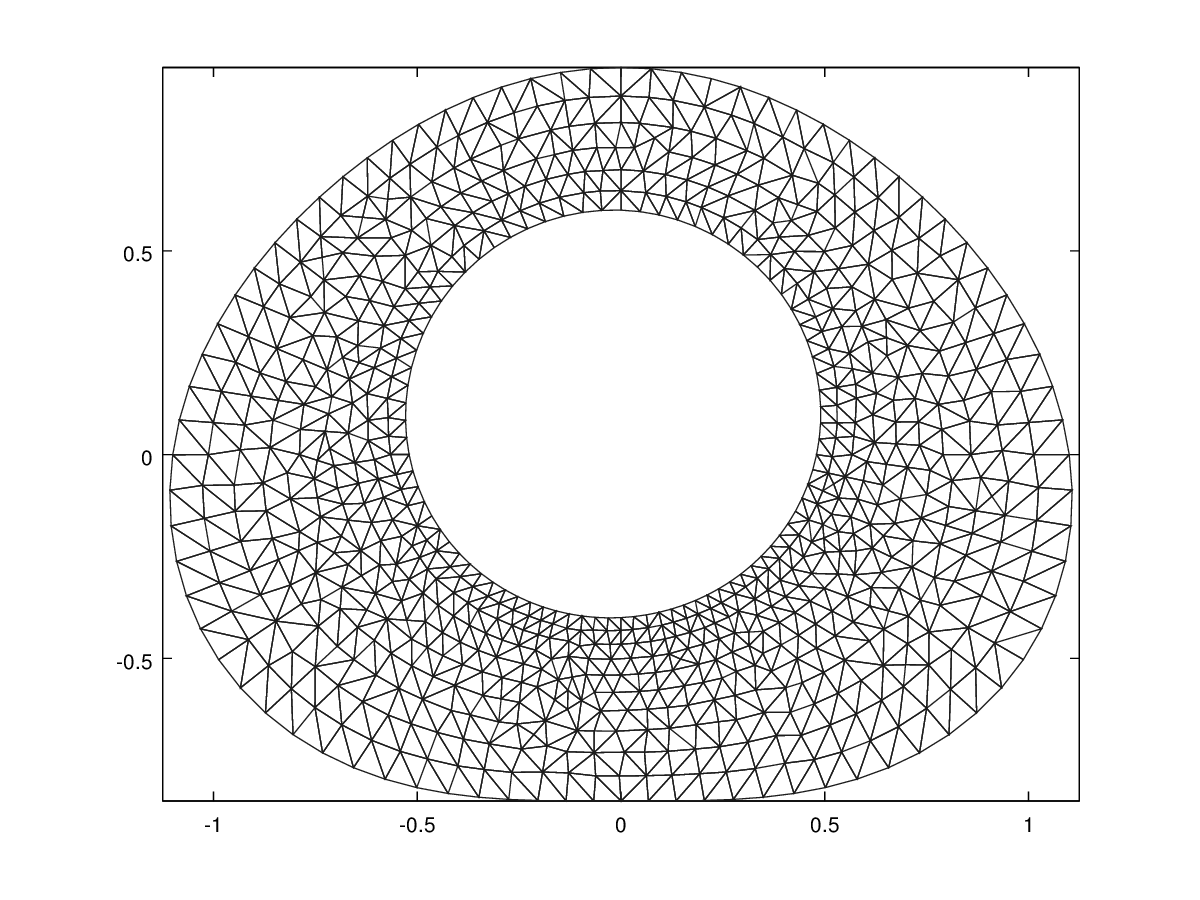}

\caption{The direct and inverse meshes used in the numerical simulation.}\label{fig_meshes}
\end{figure}

According to our study, the choice of $\varepsilon$ is completely arbitrary in the iterated quasi-reversibility method. Therefore,
we have chosen $\ep = 1$ in the experiments, as it leads to a good conditioning of the finite-element matrices.

First of all, we present in figure \ref{figure_res} the evolution of the residual 
$$
\sqrt{\Vert \nabla \cdot \mathbf{p}_\ep^M \Vert_{L^2(\Omega)}^2 + \Vert \nabla u_\ep^M - \mathbf{p}_\ep^M \Vert_{L^2(\Omega)}^2
+ \Vert u_\ep - g_D^\delta \Vert_{L^2(\Gamma)}^2 + \Vert \mathbf{p}_\ep \cdot \nu - g_N^\delta \Vert_{L^2(\Gamma)}  }
$$
until the stopping criterion is reached. As expected theoretically, the greater is the noise, the smaller is $M(\delta)$.

\begin{figure}[htp] 
\includegraphics[width=0.49\textwidth]{./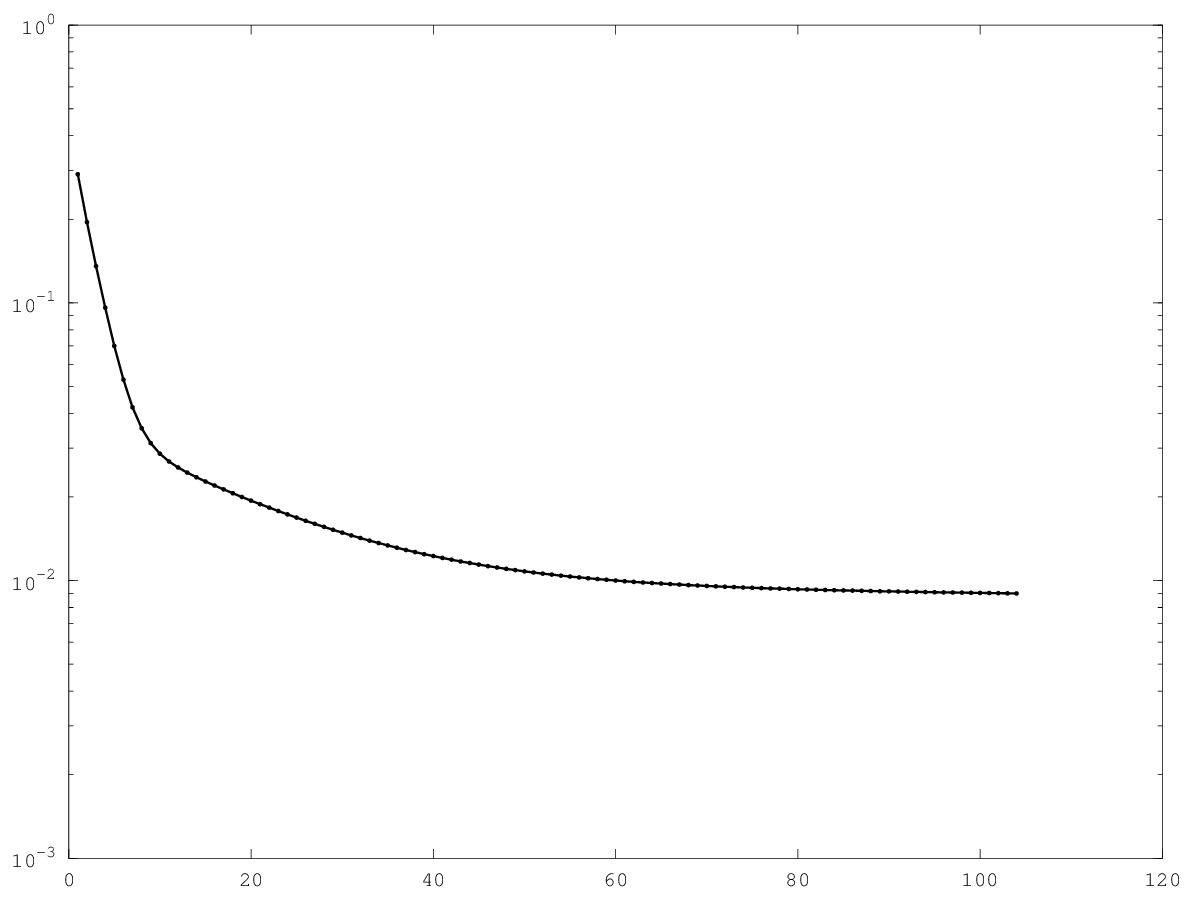}
\includegraphics[width=0.49\textwidth]{./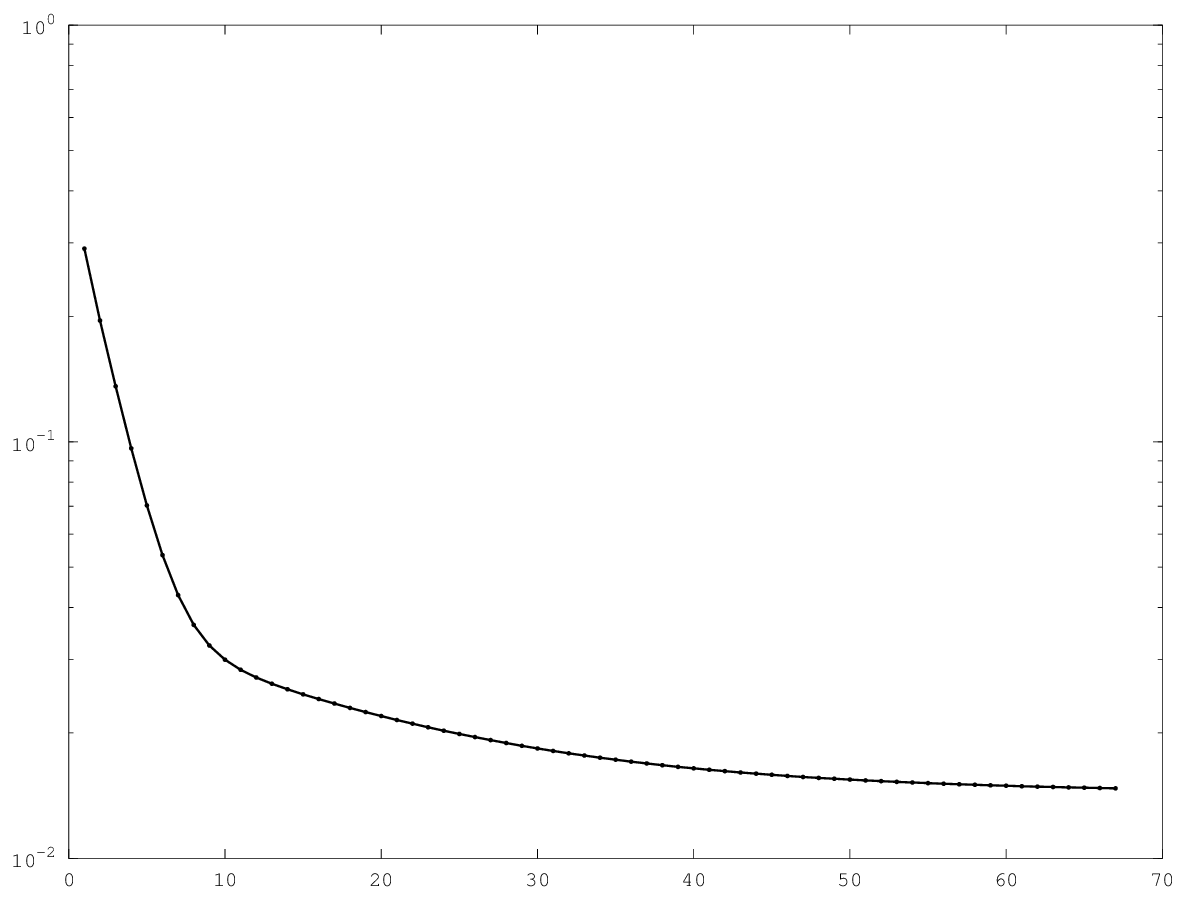}
\includegraphics[width=0.49\textwidth]{./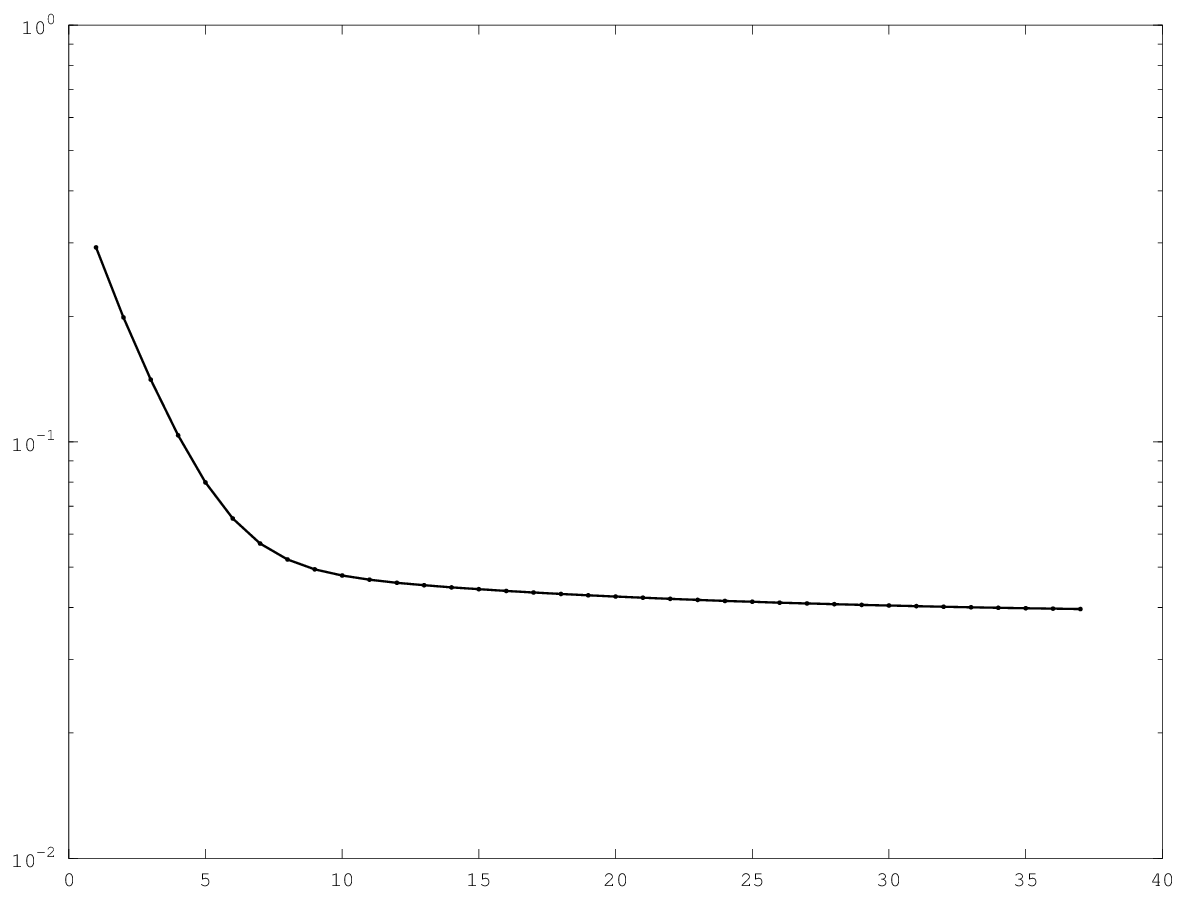}

\caption{Residual vs. number of iterations, until stopping criterion is reached, for $\alpha$ = $1\%$, $2\%$ and $5\%$.}\label{figure_res}
\end{figure}

Now we present the reconstruction results: in figure \ref{figure_uepminu}, the exact solution is compared to the reconstructed one in the whole
domain of study. In figure \ref{figure_bnddata}, we focus on the boundary $\Gamma$: we compare the exact data $g_D$, the noisy one $g_D^\delta$
used in the iterated quasi-reversibility method, and finally the trace of the reconstructed solution $u_\ep^{M(\delta)}$. Note that the iterated quasi-reversibility method gives good result even with severely corrupted data.

\begin{figure}[htp] 
\includegraphics[width=0.49\textwidth]{./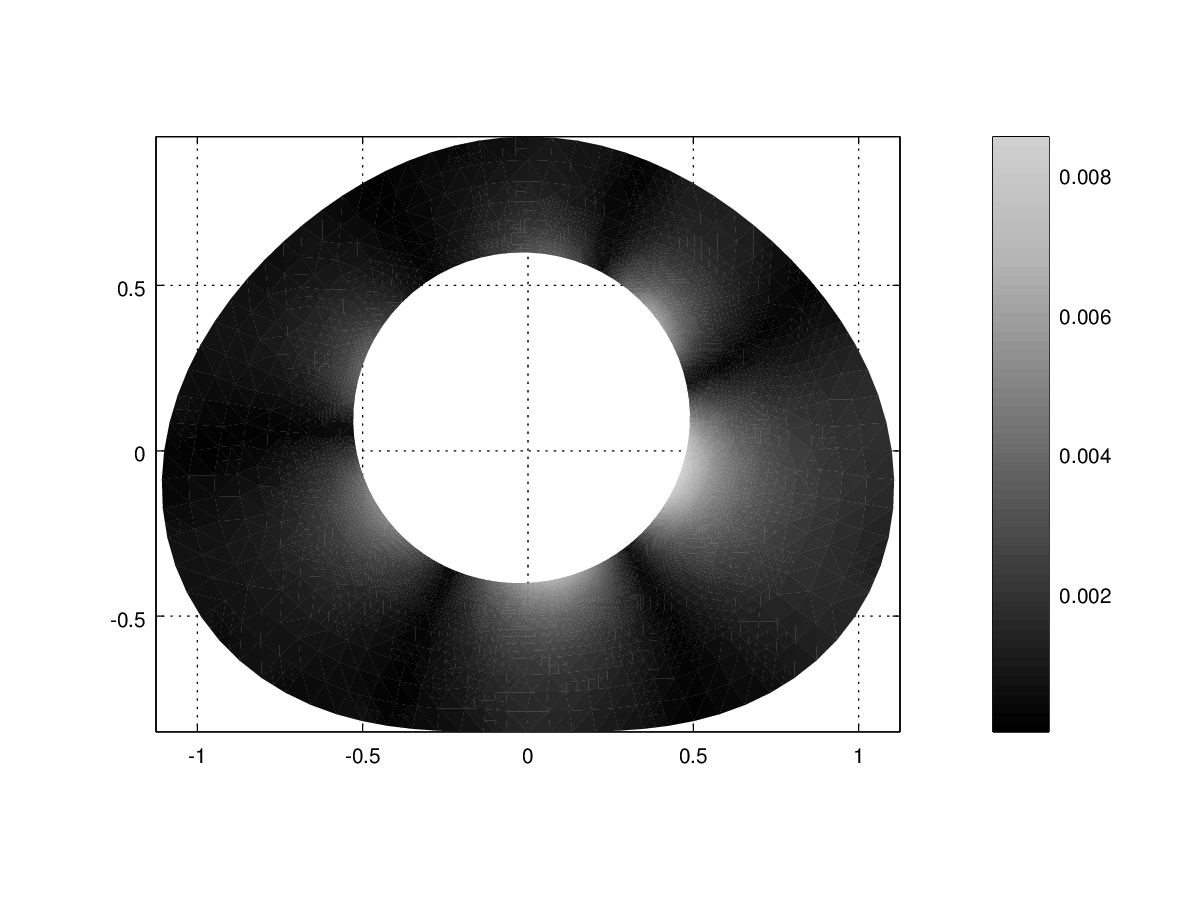}
\includegraphics[width=0.49\textwidth]{./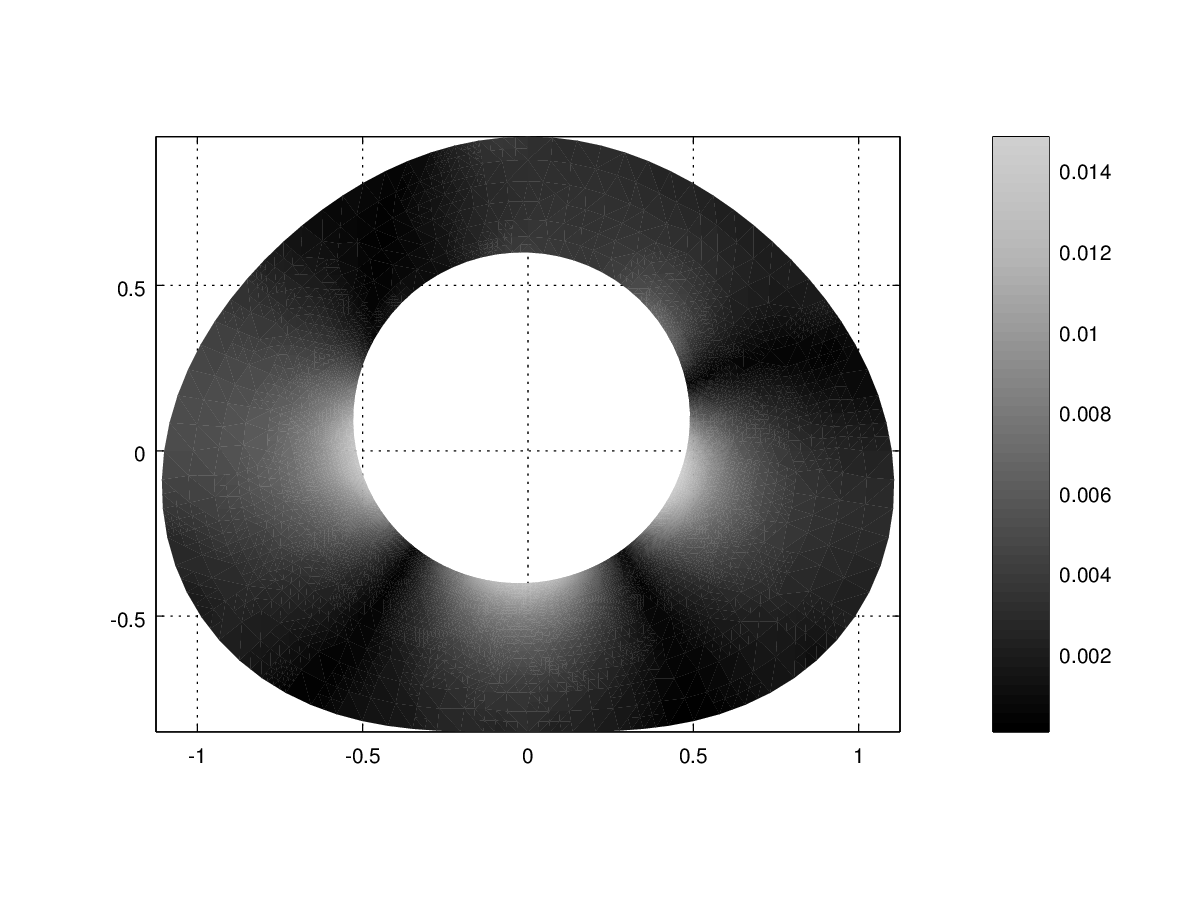}
\includegraphics[width=0.49\textwidth]{./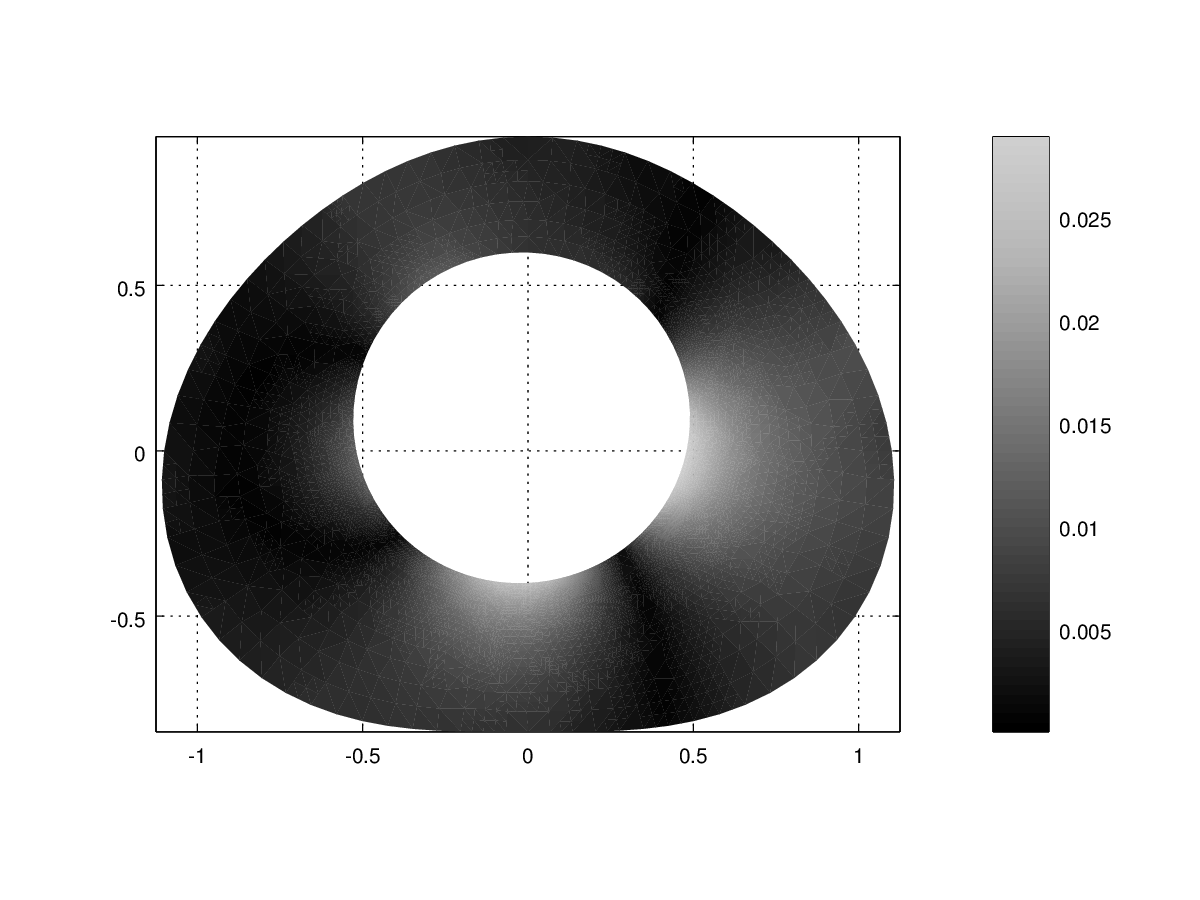}

\caption{$\vert u_\ep^{M(\delta)} - u \vert$, for $\alpha$ = $1\%$, $2\%$ and $5\%$.}\label{figure_uepminu}
\end{figure}

\begin{figure}[htp] 
\includegraphics[width=0.49\textwidth]{./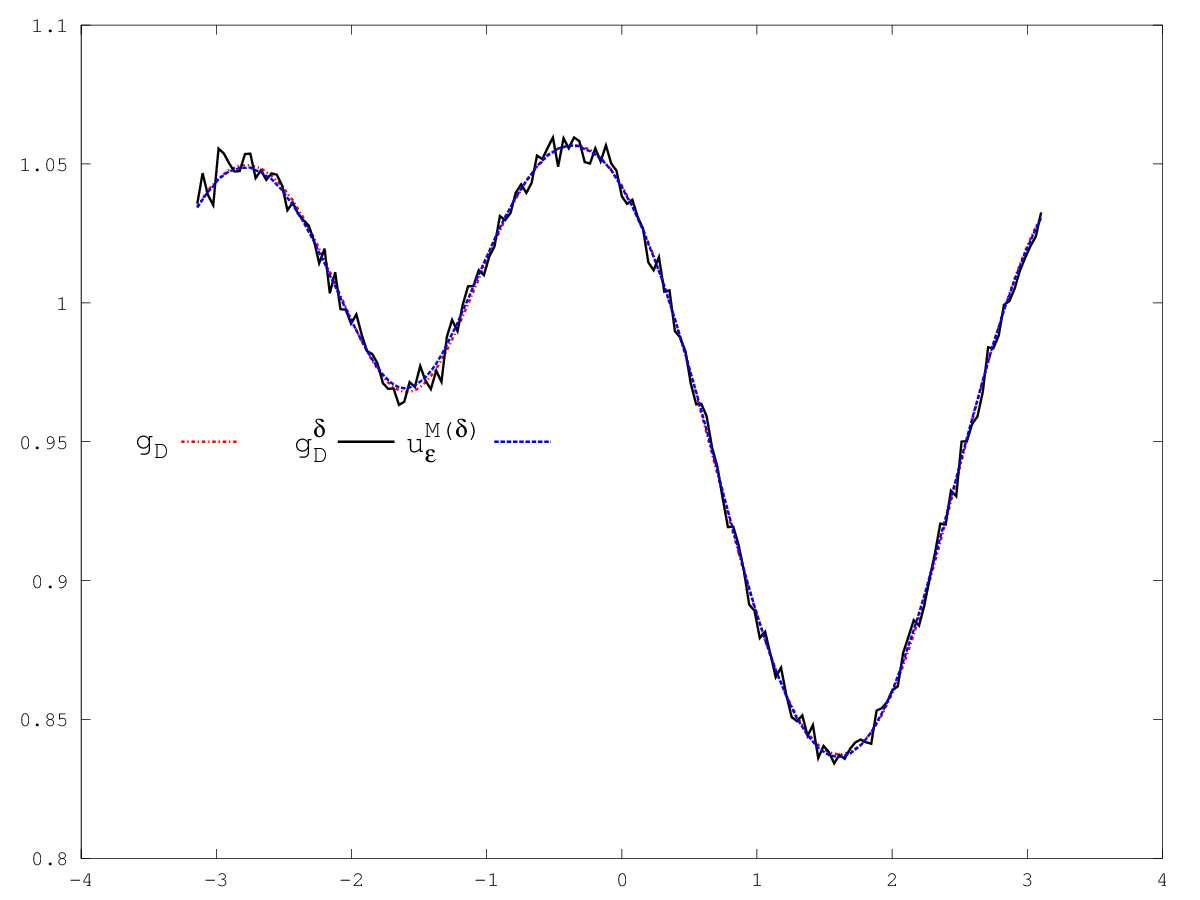}
\includegraphics[width=0.49\textwidth]{./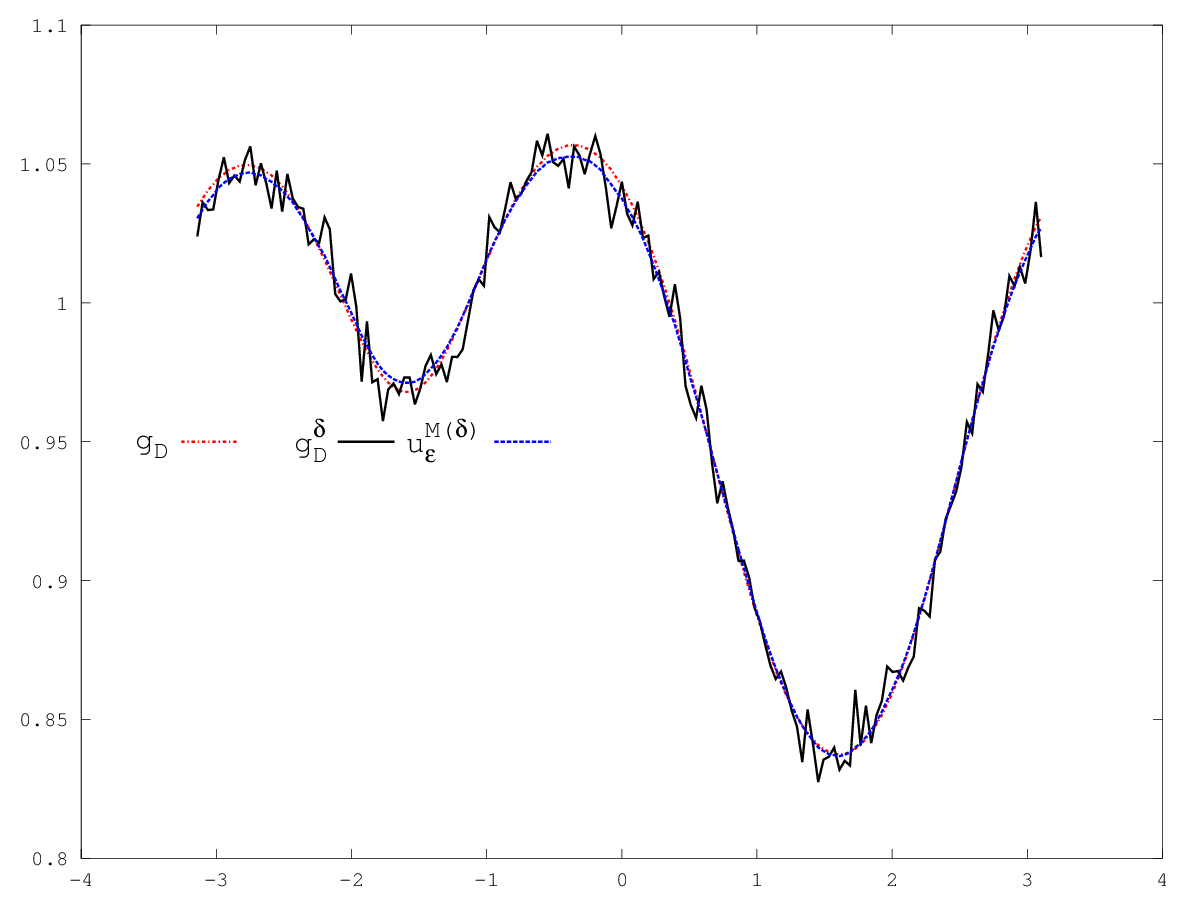}
\includegraphics[width=0.49\textwidth]{./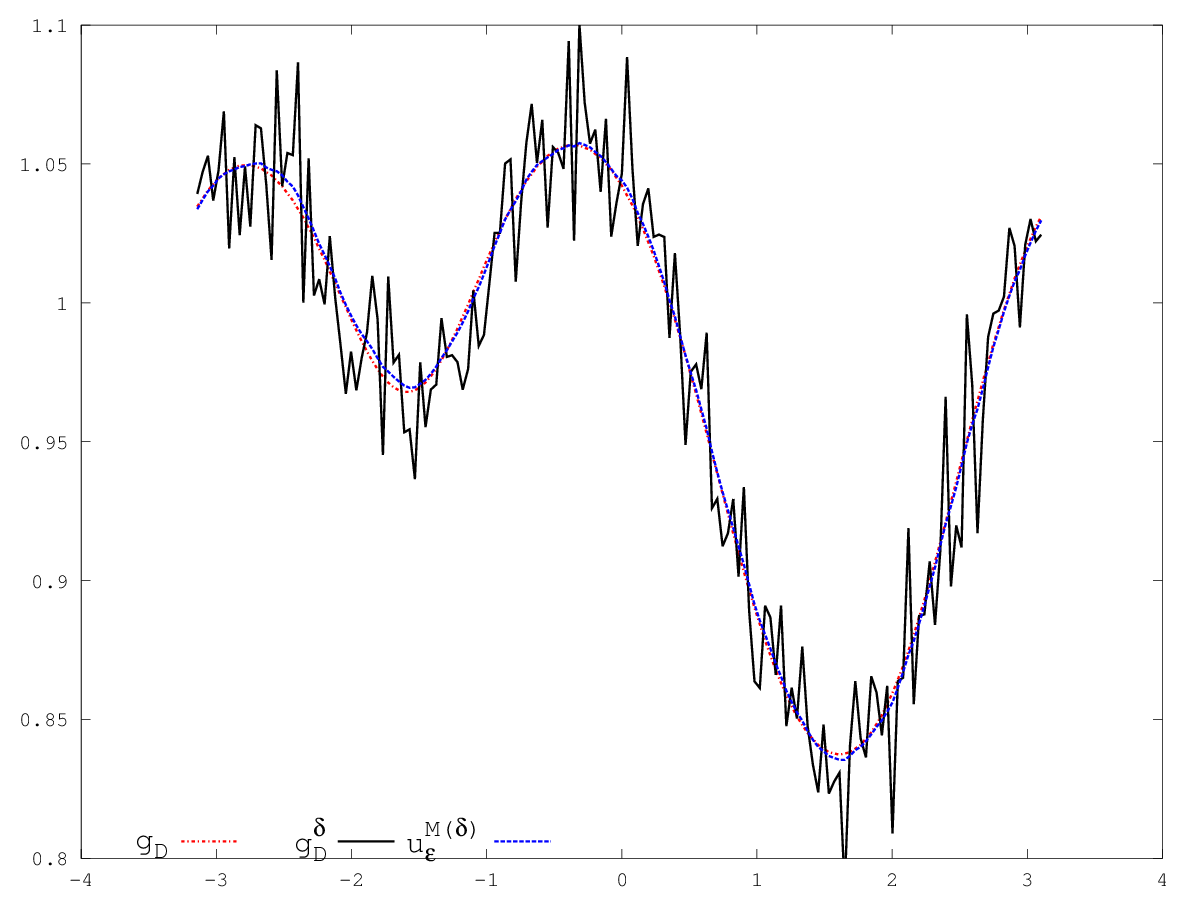}

\caption{Exact Dirichlet data, noisy Dirichlet data, and trace of $u_\ep^{M(\delta)}$ on $\Gamma$, for $\alpha$ = $1\%$, $2\%$ and $5\%$.}\label{figure_bnddata}
\end{figure} 

Finally, on figure \ref{figure_Robinrecons}, we present the reconstructed Robin coefficient on $\Gamma_c$, which was our main objective. Again, the
reconstruction is still acceptable for high level of noise on the data.

\begin{figure}[htp] 
\includegraphics[width=0.49\textwidth]{./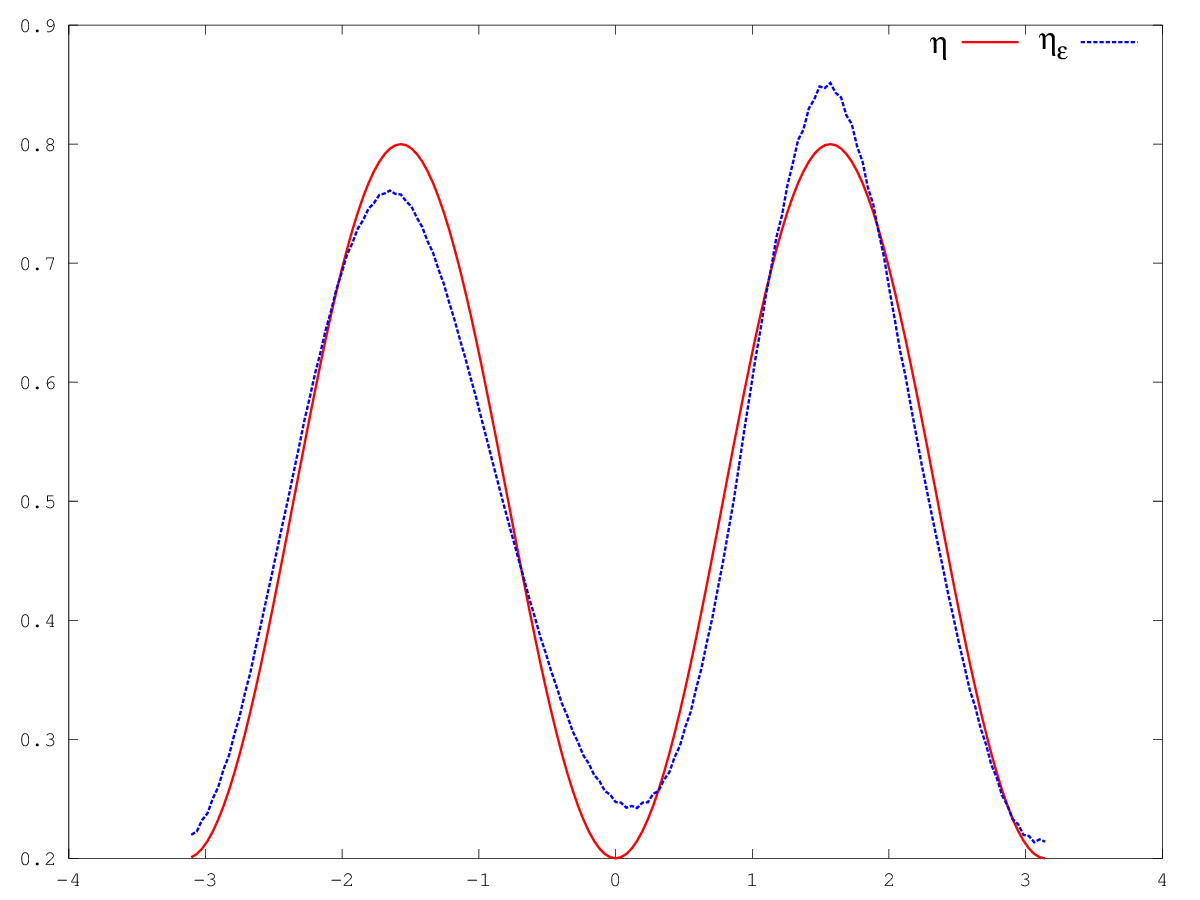}
\includegraphics[width=0.49\textwidth]{./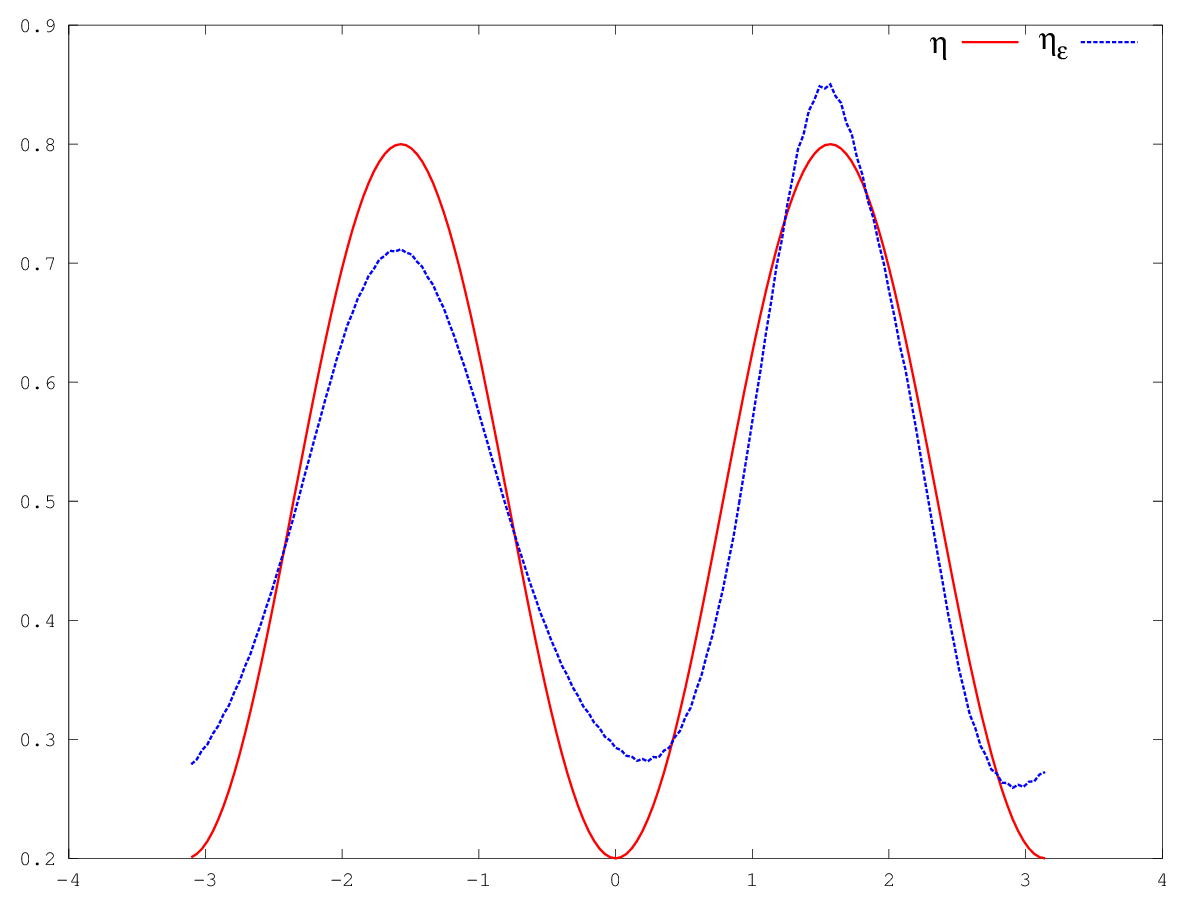}
\includegraphics[width=0.49\textwidth]{./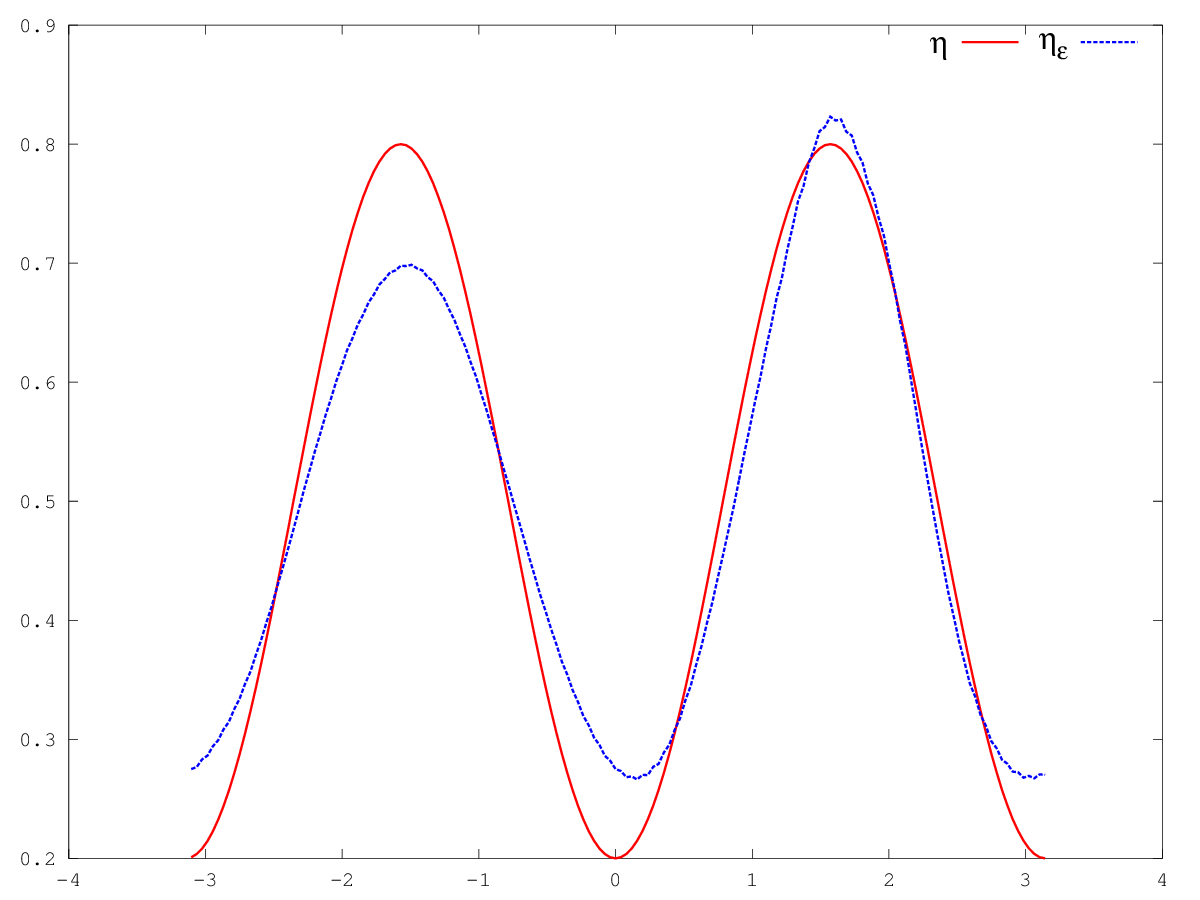}

\caption{Exact ($\eta$) and reconstructed ($\eta_\ep$) Robin coefficient on $\Gamma_c$, for $\alpha$ = $1\%$, $2\%$ and $5\%$.}\label{figure_Robinrecons}
\end{figure}

\subsection{One-dimensional heat equation}

We now focus on the data-completion problem for a one-dimensional heat equation. The problem reads: find $u \in H^{1,1}((0,T)\times (a,b))$
such that
$$
\left\lbrace 
\begin{array}{cccl}
\partial_t u   &= & \partial_{xx} u  & \text{in } (0,T)\times (a,b)\\
u(t,a) & = & g_D^\delta ,  & \ t \in (0,T) \\
\partial_x u(t,a) & = & g_N^\delta, & \ t \in (0,T).
\end{array}
\right.
$$
Note that, as $\partial_{xx}u = \partial_t u \in L^2(0,T;L^2(a,b))$, we have $p := \partial_x u \in L^2(0,T;H^1(a,b))$, and hence
$g_N(t) = p(a,t) \in L^2(0,T)$ without additional assumption, which is not the case for the multi-dimensional case.
Hence the equivalent data-completion problem with additional unknown $p$ 
reads: for $(g_D^\delta,g_N^\delta) \in L^2(0,T)\times L^2(0,T)$, find $u \in H^{1,1}((0,T)\times(a,b))$ and $p \in L^2(0,T;H^1(a;b))$ such that
$$
\left\lbrace 
\begin{array}{cccl}
\partial_t u   &= & \partial_x p  & \text{in } (0,T)\times (a,b)\\
\partial_x u  & = &  p & \text{in } (0,T) \times (a,b)\\
u(t,a) & = & g_D^\delta ,  & \ t \in (0,T) \\
p(t,a) & = & g_N^\delta, & \ t \in (0,T).
\end{array}
\right.
$$
According to our study, the quasi-reversibility regularization of this problem is: for $\ep>0$, find $u_\ep \in H^{1,1}((0,T)\times(a,b))$
and $p_\ep \in L^2(0,T;H^1(a,b))$ such that for all $v \in H^{1,1}((0,T)\times (a,b))$, for all $q \in L^2(0,T;H^1(a,b))$
$$
\int_0^T \int_a^b \Big( ( \partial_t u_\ep - \partial_x p_\ep)\, (\partial_t v - \partial_x q)  + (\partial_x u_\ep - p_\ep) \, (\partial_x v - q )\Big) \, dx\, dt
$$
$$
+ \int_0^T  \Big( u_\ep(s,a) \, v(s,a)  + p_\ep(a,s) \, q(a,s) \Big) \, dt +
\ep \int_0^T \int_a^b \Big( \partial_t u_\ep \, \partial_t v + \partial_x u_\ep \, \partial_x v + p_\ep \, q \Big) \, dx \, dt
$$
$$
= \int_0^T \Big( g_D^\delta(s) \, v(s,a)  + g_N^\delta(s) \, q(s,a)\Big) \, ds,
$$
and the iterated quasi-reversibility method: for $\ep >0$, define $(u_\ep^{-1}, p_\ep^{-1} )  = (0,0)$ and for all $M \in \mathbb{N}$,
$u_\ep^{M} \in H^{1,1}((0,T)\times(a,b))$
and $p_\ep^{M} \in L^2(0,T;H^1(a,b))$ are such that for all $v \in H^{1,1}((0,T)\times (a,b))$, for all $q \in L^2(0,T;H^1(a,b))$,
$$
\int_0^T \int_a^b \Big( ( \partial_t u_\ep^M - \partial_x p_\ep^M)\, (\partial_t v - \partial_x q)  + (\partial_x u_\ep^M - p_\ep^M) \, (\partial_x v - q )\Big) \, dx\, dt
$$
$$
+ \int_0^T  \Big( u_\ep^M(s,a) \, v(s,a)  + p_\ep^M(a,s) \, q(a,s) \Big) \, dt +
\ep \int_0^T \int_a^b \Big( \partial_t u_\ep^M \, \partial_t v + \partial_x u_\ep^M \, \partial_x v + p_\ep^M \, q \Big) \, dx \, dt
$$
$$
= \int_0^T \Big( g_D^\delta(s) \, v(s,a)  + g_N^\delta(s) \, q(s,a)\Big) \, ds + \ep \int_0^T \int_a^b \Big( \partial_t u_\ep^{M-1} \, \partial_t v + \partial_x u_\ep^{M-1} \, \partial_x v + p_\ep^{M-1} \, q \Big) \, dx \, dt.
$$
We discretize the space $H^{1,1}(\mathcal{Q})$ and $L^2(0,T;H^1(a,b))$ using a tensorial product of Lagrange finite elements, namely
$P^1 \otimes P^1$ finite elements for $H^{1,1}$ and $P^0 \times P^1$ for $L^2(H^1)$.

In our simulations, we choose $T = 1$, $a=1$ and $b =2$.
We consider two exact solution of the heat equation $\displaystyle u_1(t,x) := \frac{1}{8} \left( \frac{x^3}{3} + x\, (1+2\,t)\right)$
and $\displaystyle u_2(t,x) := e^{-t/4} \sin\left(t/2\right)$.

\begin{figure}[htb]

\includegraphics[width=0.49\textwidth]{./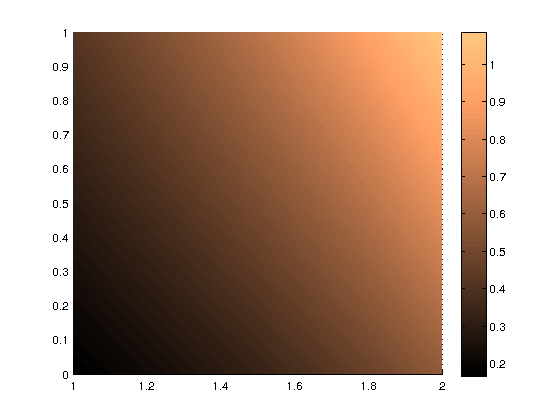}
\includegraphics[width=0.49\textwidth]{./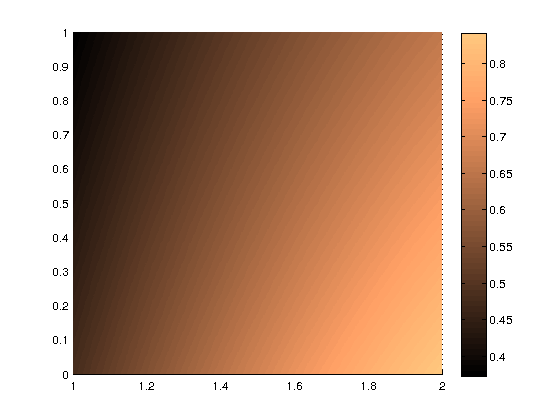}

\caption{Exact solutions $u_1$ and $u_2$ in $\mathcal{Q}$.}

\end{figure}

The corresponding exact data $(g_D,g_N)$ are corrupted pointwise by a normal noise with zero means and variance one, which is scaled so that the 
noisy data $(g_D^\delta, g_N^\delta)$ verifies
$$
\Vert g_D^\delta - g_D \Vert_\infty = \alpha \Vert g_D \Vert_\infty,\quad \Vert g_N^\delta - g_N \Vert_\infty = \alpha \Vert g_N \Vert_\infty.
$$
In our experiments, we test our method with $\alpha = 2\%$ and $\alpha = 5\%$.  As in the elliptic case, we choose $\ep = 1$, and 
stop the iterations of the method once the stopping criterion is reached.

In figures \ref{fig_relerrpol} and \ref{fig_relerrex}, we present the  relative error over $\mathcal{Q}$, defined as the ratio
$$
\frac{u_\ep^M(\delta) - u}{\Vert u \Vert_\infty}
$$
for both solutions $u_1$ and $u_2$. We see that the iterated quasi-reversibility method gives also good reconstruction for this parabolic problem,
even for high level of noise on both Dirichlet and Neumann data.

\begin{figure}[htb]

\includegraphics[width=0.49\textwidth]{./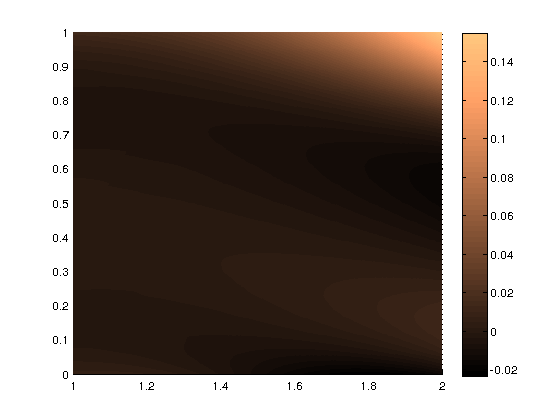}
\includegraphics[width=0.49\textwidth]{./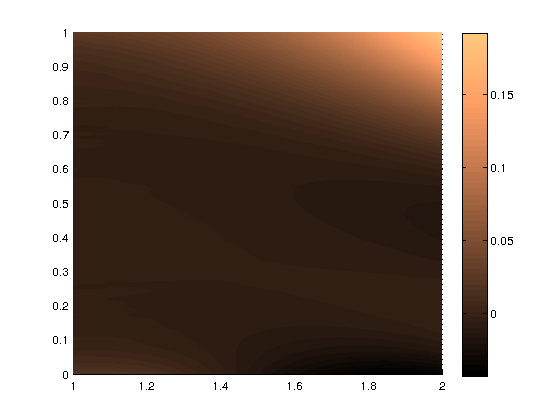}

\caption{Relative error $(u_{1 \ep}^{M(\delta)} - u_1) / \Vert u_1 \Vert_\infty$ in $\mathcal{Q}$. Left: $\alpha = 2\%$. Right: $\alpha = 5\%$.}\label{fig_relerrpol}

\end{figure}

\begin{figure}[htb]

\includegraphics[width=0.49\textwidth]{./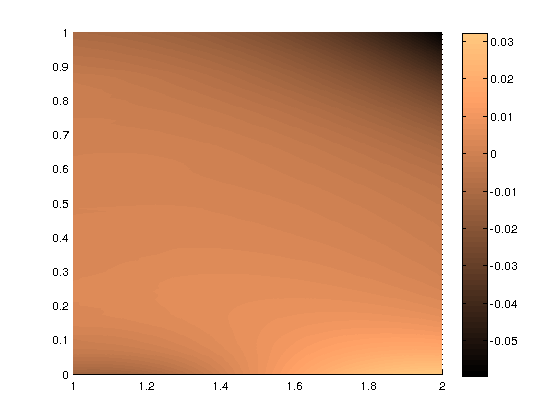}
\includegraphics[width=0.49\textwidth]{./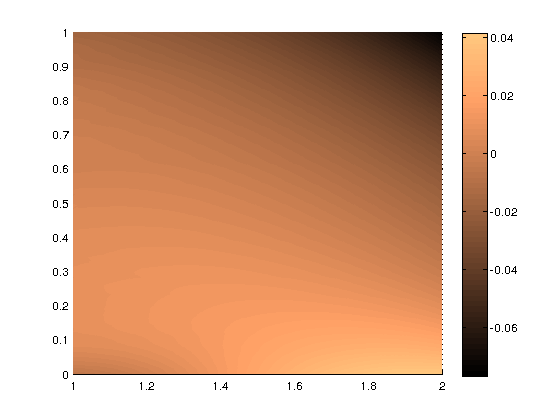}

\caption{Relative error $(u_{2 \ep}^{M(\delta)} - u_2) / \Vert u_2 \Vert_\infty$ in $\mathcal{Q}$. Left: $\alpha = 2\%$. Right: $\alpha = 5\%$.}\label{fig_relerrex}

\end{figure}

Finally, in figures \ref{fig_respol}, we present the evolution of the residual quantity
{\small
$$
\sqrt{ \Vert \partial_t u_\ep^M - \partial_x p_\ep^M \Vert_{L^2(\mathcal{Q})}^2 + \Vert \partial_x u_\ep^M - p_\ep^M\Vert_{L^2(\mathcal{Q})}^2 
 +  \Vert u_\ep^M(.,a) - g_D^\delta \Vert_{L^2(0,T)}^2 + \Vert p_\ep^M(.,a) - g_N^\delta \Vert_{L^2(0,T)}^2 }
$$ }
during the iterations of the method, until the stopping criterion is reached.

\begin{figure}[htb]

\includegraphics[width=0.49\textwidth]{./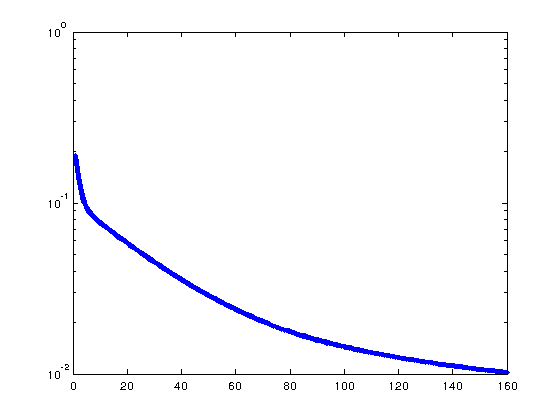}
\includegraphics[width=0.49\textwidth]{./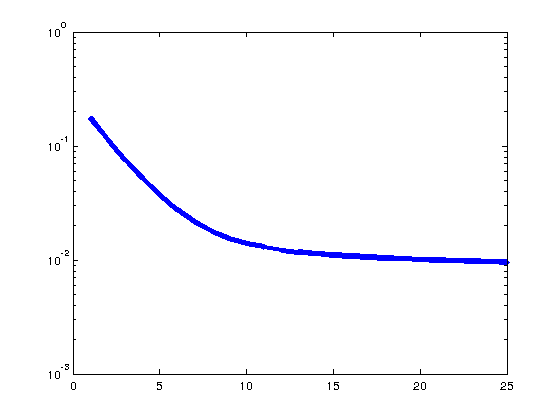}

\caption{Residual vs. number of iterations, until stopping criterion is reached, for $\alpha = 5\%$. Exact solution: left $u_1$, right $u_2$.}\label{fig_respol}

\end{figure}

\end{document}